\algrenewcommand{\algorithmiccomment}[1]{\hfill[{\textit{#1}}]}
\let\oldmarginpar\marginpar%
\renewcommand\marginpar[1]{\-\oldmarginpar[\raggedleft\footnotesize #1]%
{\raggedright\footnotesize #1}}
\newcommand{\N}{\mathbb{N}}
\newcommand{\M}{\mathscr{M}}
\newcommand{\id}{\operatorname{id}}
\renewcommand{\L}{\mathscr{L}}
\renewcommand{\H}{\mathscr{H}}
\newcommand{\J}{\mathscr{J}}
\newcommand{\R}{\mathscr{R}}
\newcommand{\set}[2]{\{#1:#2\}}
\newcommand{\n}{\{1,\ldots, n\}}
\newcommand{\genset}[1]{\langle#1\rangle}
\renewcommand{\to}{\longrightarrow}
\newcommand{\bigset}[2]{\big\{ {#1}:{#2} \big\}}
\newtheorem{thm}{Theorem}[section]
\newtheorem{lem}[thm]{Lemma}
\newtheorem{cor}[thm]{Corollary}
\newtheorem{prop}[thm]{Proposition}
\theoremstyle{definition}
\newtheorem{ex}[thm]{Example}
\newcommand{\edge}[2]{\draw (#1) -- (#2);}
\newcommand{\arc}[2]{\draw[->] (#1) -- (#2);}
\title{Computing maximal subsemigroups of a finite semigroup}
\author{C. R. Donoven, J. D. Mitchell, and W. A. Wilson}
\begin{document}

\maketitle
\begin{abstract}
  A proper subsemigroup of a semigroup is \textit{maximal} if it is not
  contained in any other proper subsemigroup.  A maximal subsemigroup of a
  finite semigroup has one of a small number of forms, as described in a paper
  of Graham, Graham, and Rhodes.  Determining which of these forms arise in a
  given finite semigroup is difficult, and no practical mechanism for doing
  so appears in the literature.  We present an algorithm for computing the
  maximal subsemigroups of a finite semigroup $S$ given knowledge of the
  Green's structure of $S$, and the ability to determine maximal subgroups of
  certain subgroups of $S$, namely its group $\mathscr{H}$-classes. 
  
  In the case of a finite semigroup $S$ represented by a generating set $X$, in
  many examples, if it is practical to compute the Green's structure of $S$
  from $X$, then it is also practical to find the maximal subsemigroups of $S$
  using the algorithm we present.  In such examples, the time taken to
  determine the Green's structure of $S$ is comparable to that taken to find
  the maximal subsemigroups.  The generating set $X$ for $S$ may consist, for
  example, of transformations, or partial permutations, of a finite set, or of
  matrices over a semiring.  Algorithms for computing the Green's structure of
  $S$ from $X$ include the Froidure-Pin Algorithm, and an algorithm of the
  second author based on the Schreier-Sims algorithm for permutation groups.
  The worst case complexity of these algorithms is polynomial in $|S|$, which
  for, say, transformation semigroups is exponential in the number of points on
  which they act. 

  Certain aspects of the problem of finding maximal subsemigroups reduce to
  other well-known computational problems, such as finding all maximal cliques
  in a graph and computing the maximal subgroups in a group.
 
  The algorithm presented comprises two parts. One part relates to computing
  the maximal subsemigroups of a special class of semigroups, known as Rees
  0-matrix semigroups. The other part involves a careful analysis of certain
  graphs associated to the semigroup $S$, which, roughly speaking, capture the
  essential information about the action of $S$ on its $\J$-classes. 
\end{abstract}

\section{Introduction}

A \textit{semigroup} $S$ is a set with an associative operation.  A
\textit{subsemigroup} $M$ of a semigroup $S$ is just a subset that is closed
under the operation of $S$. A \textit{maximal subsemigroup} $M$ of a semigroup
$S$ is a proper subsemigroup that is not contained in any other proper
subsemigroup. Every proper subsemigroup of a finite semigroup is contained in a
maximal subsemigroup, although the same is not true for infinite semigroups.
For example, the multiplicative semigroup consisting of the real numbers in the
interval $(1, \infty)$ has no maximal subsemigroups, but it does have proper
subsemigroups. 

There are numerous papers in the literature about finding maximal subsemigroups
of particular classes of semigroups; for example~\cite{Dimitrova2008aa,
Dimitrova2011aa, Dimitrova2013aa, Dimitrova2012aa, Dimitrova2009aa,
Dimitrova2009ab,  East2015aa, Gyudzhenov2006aa, Gyudzhenov2006ab, Hotzel1995aa,
Koppitz2014aa, Levi1984aa, Todorov2005aa, Yang2004aa, Yang2000aa,
Yang2001aa}. Perhaps the most important paper on this topic for finite
semigroups is that by Graham, Graham, and Rhodes~\cite{Graham1968aa}. This paper
appears to have been overlooked for many years, and indeed, special cases of the
results it contains have been repeatedly reproved.  

The main purpose of this paper is to give algorithms that can be used to find
the maximal subsemigroups of an arbitrary finite semigroup.  Our algorithms are
based on the paper from 1968 of Graham, Graham, and Rhodes~\cite{Graham1968aa}.
The algorithms described in this paper are implemented in the {\sc
GAP}~\cite{GAP4} package {\sc Semigroups}~\cite{Mitchell2017aa}.
This paper is organised as follows. In the remainder of this section, we
introduce the required background material and notation.  We state the main
results of Graham, Graham, and Rhodes~\cite{Graham1968aa} in
Propositions~\ref{prop-graham} and~\ref{prop-rms}.  In
Section~\ref{section-rees}, we describe algorithms for finding the maximal
subsemigroups of a finite regular Rees 0-matrix semigroup over a group.  In
Section~\ref{section-arbitrary}, we use the procedures from
Section~\ref{section-rees} to describe an algorithm for finding the maximal
subsemigroups of an arbitrary finite semigroup.

Henceforth we only consider finite semigroups, and we let $S$ denote an
arbitrary finite semigroup throughout.  We denote by $S ^ 1$ the semigroup
obtained from $S$ by adjoining an identity element $1\not\in S$.  In other
words, $1s = s1 = s$ for all $s \in S ^ 1$.  If $X$ is any subset of a
semigroup $S$, then we denote by $\genset{X}$ the least subsemigroup of $S$
containing $X$. If $\genset{X} = S$, then we refer to $X$ as a
\textit{generating set} for $S$.  

Let $x,y\in S$ be arbitrary.  We say that $x$ and $y$ are $\L$-related if the
principal left ideals generated by $x$ and $y$ in $S$ are equal; in other
words, $S^1x=S^1y$.  Clearly $\L$ defines an equivalence relation on $S$.  We
write $x\L y$ to denote that $x$ and $y$ are $\L$-related.  Green's
$\R$-relation is defined dually to Green's $\L$-relation; Green's $\H$-relation
is the meet, in the lattice of equivalence relations on $S$, of $\L$ and $\R$.
In any semigroup, $x\J y$ if and only if the (2-sided) principal ideals
generated by $x$ and $y$ are equal. However, in a finite semigroup  
$\J$ is the join of $\L$ and $\R$. We will refer to the equivalence
classes as $\mathscr{K}$-classes where $\mathscr{K}$ is any of $\R$, $\L$,
$\H$, or $\J$, and the $\mathscr{K}$-class of $x\in S$ will be denoted by
$K_x$. We write $\mathscr{K}^S$ if it is necessary
to explicitly refer to the semigroup on which the relation is defined. We denote
the set of $\mathscr{K}$-classes of a semigroup $S$ by $S/\mathscr{K}$.

An \emph{idempotent} is an element $x \in S$ such that $x^{2}=x$.  We denote
the set of idempotents in a semigroup $S$ by $E(S)$.  A $\J$-class of a finite
semigroup is \emph{regular} if it contains an idempotent, and a finite
semigroup is called \emph{regular} if each of its $\J$-classes is regular.
Containment of principal ideals induces a partial order on the $\J$-classes of
$S$.

If $J$ is an arbitrary
$\J$-class of a finite semigroup $S$, then we denote the \textit{principal
factor} of $J$ by $J^*$.  In other words, $J^*$ is the semigroup with elements
$J\cup \{0\}$ and multiplication $*$ given by setting $x*y=0$ if the product in
$S$ of $x,y\in J$ does not belong to $J$, and letting it have its value in $J$
otherwise. 

A \textit{Rees $0$-matrix semigroup}
$\M^{0}[I,G,\Lambda;P]$ is the set $(I\times G\times\Lambda)\cup\{0\}$ where
$I,\Lambda\neq\varnothing$, $G$ is a group, and $P$ is a $|\Lambda|\times |I|$
matrix with entries $p_{\lambda, i}$ ($\lambda \in \Lambda, i\in I$) in
$G\cup\{0\}$, with multiplication defined by

\begin{equation*}
  0x = x0 = 0 \text{\ for all\ } x \in \M^{0}[I,G,\Lambda; P]
  \qquad\text{and}\qquad 
  (i, g, j)(k, h, l) =
  \begin{cases}
    (i, g p_{j, k} h, l) & \text{if\ } p_{j, k} \neq 0,\\
    0                    & \text{if\ } p_{j, k} = 0.
  \end{cases}
\end{equation*}
Throughout this paper we assume without loss of generality that
$I \cap \Lambda= \varnothing$. 

If $J$ is a $\J$-class of a finite semigroup, then by the Rees
Theorem~\cite[Theorem 3.2.3]{Howie1995aa}, $J^{*}$ is isomorphic to either a
regular Rees $0$-matrix semigroup over a group, or a null semigroup.

A \textit{graph} is a pair $\Gamma = (V, E)$ consisting of a set of
\textit{vertices} $V$ and a set of \textit{edges} $E \subseteq
\set{\{u,v\}}{u,v\in V,\ u\not= v}$.  If $u$ is a vertex in a graph and
$\{u,v\}$ is an edge in that graph, then we say that $u$ is \textit{incident} to
$\{u,v\}$, and we say that $\{u,v\}$ is \textit{incident} to $u$. If $\Gamma =
(V, E)$ is a graph and $W$ is a subset of the vertices of $\Gamma$, then the
\textit{subgraph induced by $W$} is the graph with vertices $W$ and edges
$\set{\{u, v\}\in E}{u, v\in W}$. A \textit{clique} $K$ in a graph $\Gamma = (V,
E)$ is a subset of the vertices of $\Gamma$ such that $\{u, v\}\in E$ for all
$u, v\in K$, $u \not= v$. If $\Gamma = (V, E)$ is a graph, then the
\textit{complement} of $\Gamma$ is the graph with vertices $V$ and edges
$\set{\{u, v\}\not \in E}{u\not = v}$.  An \textit{independent set} in a graph
is a set of vertices which is a clique in the complement.  A clique is
\textit{maximal} if it is not properly contained in another clique.  A
\textit{maximal independent set} is defined analogously.  A graph is
\textit{bipartite} if its vertices can be partitioned into two independent sets.
If $\Gamma = (V, E)$ is a graph, then a \textit{path} is a non-empty sequence of
distinct vertices of $\Gamma$, $(v_{1}, \ldots, v_{m})$, where $\{v_{i},
v_{i+1}\}\in E$ for all $i \in \{1, \ldots, m - 1\}$. The \textit{connected
component} of a vertex $v$ of a graph is the set consisting of all vertices $u$
such that there is a path from $u$ to $v$. 

If $R = \M^{0}[I, G, \Lambda; P]$ is a Rees 0-matrix semigroup, then we define
the \textit{Graham-Houghton graph $\Gamma(R)$ of $R$} to be the bipartite graph
with vertices $I\cup \Lambda$ and edges $\{i, \lambda\}$ whenever $p_{\lambda,
i}\not = 0$, for $i\in I$, $\lambda\in \Lambda$.  Since we assume throughout
that $I\cap \Lambda = \varnothing$, the Graham-Houghton graph of $R$ has $|I| +
|\Lambda|$ vertices.  Variants of this graph were introduced
in~\cite{Graham1968ab, Houghton1977aa}.

We will use the following well-known results repeatedly throughout this paper.

\begin{lem}[Green's~Lemma; Lemmas~{2.2.1} and~{2.2.2} in~\cite{Howie1995aa}]
  \label{lem-green}
  Let $S$ be a semigroup, and let $x, y \in S$ be such that $x \R y$. If $s, t
  \in S^{1}$ are such that $xs = y$ and $yt = x$, then the functions $L_{x} \to
  S$ and $L_{y} \to S$ defined by $a \mapsto as$ and  $b \mapsto bt$,
  respectively, are mutually inverse bijections from the $\L$-class $L_{x}$
  onto $L_{y}$ and $L_{y}$ onto $L_{x}$, respectively, that preserve Green's
  $\R$-relation. 
\end{lem}

An analogue of Lemma~\ref{lem-green} holds when $\L$- and $\R$-relations are
interchanged, which is also referred to as Green's Lemma.

\begin{lem}[Theorem~{A.2.4} in~\cite{Rhodes2009aa}]\label{lem-d-finite}
  Let $S$ be a finite semigroup and let $x, y \in S ^ {1}$.  Then $x \J xy$ if
  and only if $x \R xy$, and  $x \J yx$ if and only if $x \L yx$. 
\end{lem}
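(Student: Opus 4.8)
The plan is to prove the first equivalence $x \J xy \iff x \R xy$ directly, and then deduce the second equivalence $x \J yx \iff x \L yx$ by the left--right dual argument (equivalently, by applying the first equivalence in the opposite semigroup of $S$). So I will concentrate on the first.

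One implication is free: since $\R \subseteq \J$ in any semigroup (if $xS^1 = xyS^1$ then $S^1xS^1 = S^1xyS^1$), we have $x \R xy \implies x \J xy$. For the converse, suppose $x \J xy$. Then $x \in S^1\, xy\, S^1$, so there are $w, z \in S^1$ with $x = w(xy)z = wx(yz)$. The \emph{key step} --- and the only place finiteness is used --- is to iterate this identity: substituting the right-hand side back into the occurrence of $x$ in the middle gives $x = w^2 x (yz)^2$, and an easy induction yields $x = w^n x (yz)^n$ for every $n \geq 1$. Since $S^1$ is finite, the monogenic subsemigroup generated by $w$ contains an idempotent power $e = w^n$; fixing such an exponent $n$, we get $x = e\, x\, (yz)^n$, and left-multiplying by $e$ and using $e^2 = e$ collapses the prefix, since $ex = e\big(e\, x\, (yz)^n\big) = e\, x\, (yz)^n = x$, whence $x = x(yz)^n$.

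It remains to extract the $\R$-relation from $x = x(yz)^n$. Peeling off one factor and re-associating, $x = x(yz)^n = \big(x(yz)^{n-1}y\big)z = xy\,(zy)^{n-1}z$, so $x \in xy\,S^1$; together with the trivial inclusion $xy \in xS^1$ this gives $xS^1 = xyS^1$, that is, $x \R xy$. I expect the only real obstacle to be recognising this iteration trick --- boosting the left multiplier $w$ to an idempotent power so that it can be absorbed --- after which everything is bookkeeping with associativity and the definitions of Green's relations; the dual statement requires no new ideas.
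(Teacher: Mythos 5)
Your argument is correct and complete: the iteration $x = w^n x(yz)^n$, the passage to an idempotent power of $w$ afforded by finiteness, and the regrouping $x(yz)^n = xy\,(zy)^{n-1}z$ are all valid, and the dual statement does follow by working in the opposite semigroup. Note that the paper itself offers no proof of this lemma --- it is quoted as Theorem A.2.4 of the cited reference --- so there is nothing to compare against; your write-up is essentially the standard textbook proof of stability for finite semigroups and would serve as a self-contained substitute for the citation.
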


An arbitrary (not necessarily finite) semigroup satisfying the conditions of
Lemma~\ref{lem-d-finite} is called \textit{stable}.

The following lemma is important to the description of the maximal
subsemigroups of a finite semigroup.

\begin{lem}[Proposition(1) in~\cite{Graham1968aa}]\label{lem-graham}
  Let $S$ be a finite semigroup and let $M$ be a maximal subsemigroup of $S$.
  Then $S\setminus M$ is contained in a single $\J$-class of $S$. 
\end{lem}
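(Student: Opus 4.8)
The plan is to argue by contradiction: suppose $S \setminus M$ meets at least two distinct $\J$-classes. The key idea is that the union of $M$ with a carefully chosen piece of $S\setminus M$ should again be a proper subsemigroup, contradicting maximality. More precisely, let $J$ be a $\J$-class of $S$ that is maximal (in the partial order on $\J$-classes) among those $\J$-classes meeting $S\setminus M$. I would set $M' = M \cup J$ if this is a subsemigroup; the point of choosing $J$ maximal is that products of elements of $J$ either stay in $J$ or drop to strictly lower $\J$-classes, and those lower $\J$-classes — since they are strictly below a maximal ``bad'' $\J$-class, one needs to check they lie in $M$. Actually the cleanest route is the following.

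First I would establish the set-up. Write $K$ for the set of $\J$-classes that intersect $S\setminus M$ nontrivially, and suppose for contradiction $|K| \geq 2$. Pick $J_0 \in K$ minimal in the $\J$-order among elements of $K$, and let $N = S \setminus J_0$, i.e.\ remove from $S$ only the elements of the single $\J$-class $J_0$. I claim $N$ is a subsemigroup: if $x, y \in N$ and $xy \in J_0$, then by Lemma~\ref{lem-d-finite} (stability) $x \R xy$ and $y \L xy$, so $x, y \in J_0$, a contradiction. Moreover $N$ is a proper subsemigroup (it omits $J_0 \neq \varnothing$) and $N \supseteq M$, since $S \setminus M \supseteq J_0$ would be needed for $N$ to equal $M$, but $S\setminus M$ also meets another $\J$-class in $K$, so $N \supsetneq M$. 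By maximality of $M$, this forces $N = S$, contradicting $J_0 \subseteq S \setminus N$. Hence $|K| \le 1$, which is exactly the claim.

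Let me reexamine the containment $M \subseteq N$: this is immediate because $N = S \setminus J_0$ and $M \subseteq S$ with $M \cap J_0 = \varnothing$ (as $J_0 \subseteq S\setminus M$). And $N \neq M$ because there is some other $\J$-class $J_1 \in K$ with $J_1 \cap (S\setminus M) \neq \varnothing$, and that element lies in $N = S\setminus J_0$ but not in $M$. So $N$ is a proper subsemigroup strictly containing $M$, contradicting maximality. The only non-formal ingredient is the verification that $S \setminus J$ is a subsemigroup for any single $\J$-class $J$ — and for this, stability (Lemma~\ref{lem-d-finite}) does all the work, since it rules out two elements outside $J$ multiplying into $J$. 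I do not actually need $J_0$ to be minimal; any $\J$-class contained in $S\setminus M$ works, provided $S\setminus M$ is not contained in it.

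The step I expect to require the most care is confirming that $S \setminus J$ really is closed under multiplication. Concretely, if $xy \in J$ with $x \notin J$ or $y \notin J$, then $J = J_{xy} \le J_x$ and $J = J_{xy} \le J_y$ in the $\J$-order; stability upgrades $J_x = J_{xy}$ to $R_x = R_{xy}$ and $J_y = J_{xy}$ to $L_y = L_{xy}$, and in a finite (hence stable) semigroup $xy \in J$ together with $x \J xy \J y$ forces $x, y \in J$ — a contradiction. This is precisely Lemma~\ref{lem-d-finite} applied to the pairs $(x,y)$ and $(x,y)$ viewed as $x \cdot y$ and $x \cdot y$; the one subtlety is making sure the hypotheses ``$x \J xy$'' and ``$y \J xy$'' are correctly deduced from $xy \in J$ and $x, y$ lying in $\J$-classes above $J$, which follows from the general fact that $xy$ lies in a $\J$-class below both $J_x$ and $J_y$ together with the assumption $xy \in J$. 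Everything else is bookkeeping about proper containments.
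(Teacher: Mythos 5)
There are two genuine gaps here, and the second one is fatal. First, the containment $M \subseteq N = S\setminus J_0$ does not follow from your setup: $J_0 \in K$ means only that $J_0$ \emph{meets} $S\setminus M$, not that $J_0 \subseteq S\setminus M$, so $M \cap J_0$ may be non-empty, in which case $S\setminus J_0$ does not contain $M$ and no contradiction with maximality is available. This is not a hypothetical worry: Proposition~\ref{prop-graham}\eqref{item-prop-intersect} describes maximal subsemigroups that intersect every $\H$-class of $J(M)$ non-trivially, so the $\J$-class from which elements are removed very often still meets $M$. Second, and more seriously, your key claim that $S\setminus J$ is closed under multiplication for an arbitrary $\J$-class $J$ is false, and the appeal to stability is a misapplication of Lemma~\ref{lem-d-finite}. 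That lemma states that $x \J xy$ \emph{if and only if} $x \R xy$; it does not allow you to deduce $x \J xy$ from $xy \in J$ and $x \notin J$. In general one only has $J_{xy} \leq J_{x}$ and $J_{xy} \leq J_{y}$, and both inequalities can be strict simultaneously. A two-element counterexample: $S = \{a, 0\}$ with $a^{2} = 0$ has $\J$-classes $\{a\}$ and $\{0\}$, and $S\setminus\{0\} = \{a\}$ is not a subsemigroup; likewise, in the full transformation monoid two maps of rank $2$ can compose to a map of rank $1$. So the object $N$ at the heart of your argument need not be a subsemigroup at all.

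The paper does not reprove this statement (it is quoted from Graham, Graham, and Rhodes), but the standard argument is short and avoids both problems: take $x, y \in S\setminus M$. By maximality, $S = \genset{M \cup \{x\}}$, so $y$ is a product of elements of $M \cup \{x\}$ in which $x$ must occur (otherwise $y \in M$); hence $y \in S^{1}xS^{1}$, i.e.\ $J_{y} \leq J_{x}$ in the $\J$-order. By symmetry $J_{x} \leq J_{y}$, so $x \J y$. This argument uses maximality directly, through generation, and requires no claim that the complement of a $\J$-class is a subsemigroup.
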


If $S$ is a finite semigroup and $M$ is a maximal subsemigroup of $S$, then we
denote the $\J$-class of $S$ containing $S\setminus M$ by $J(M)$.  It is also
shown in the second part of the main proposition in~\cite{Graham1968aa} that $M$
is either a union of $\H$-classes of $S$ or has non-empty intersection with
every $\H$-class of $S$. 

We include a version of the main result in~\cite{Graham1968aa}, which is
slightly reformulated for our purposes here.  More specifically,
Proposition~\ref{prop-graham}\eqref{item-prop-nonreg}
and~\eqref{item-prop-intersect} are reformulations of parts (3) and Case 1 of
part (4) of the main proposition in~\cite{Graham1968aa}.
Proposition~\ref{prop-graham}\eqref{item-prop-union} is similar to Case 2 of
part (4) of the main proposition in~\cite{Graham1968aa}.  Our statement follows
from the proof in~\cite{Graham1968aa}, but it is not exactly the statement
in~\cite{Graham1968aa}.  

\begin{prop}\label{prop-graham}
  Let $S$ be a finite semigroup, let $M$ be a maximal subsemigroup of $S$, let
  $J(M)$ denote the $\J$-class of $S$ such that $S\setminus M\subseteq J(M)$,
  and let $\phi:{J(M)}^* \to \M^{0}[I,G,\Lambda;P]$ be any isomorphism.  Then
  one of the following holds:
  \begin{enumerate}[label=\emph{(\alph*)},ref=\alph*]
    \item\label{item-prop-nonreg}
      $J(M)$ is non-regular and $J(M)\cap M = \varnothing$;

    \item\label{item-prop-intersect}
      $J(M)$ is regular, $M$ intersects every $\H$-class of $J(M)$
      non-trivially, and $(M\cap J(M))\phi \cup \{0\}
      \cong\M^{0}[I,H,\Lambda;Q]$ where
      $H$ is a maximal subgroup of $G$ and $Q$ is a $|\Lambda|\times |I|$
      matrix with entries over $H\cup\{0\}$.  In this case,
      $(M\cap J(M))\phi \cup \{0\}$ is a maximal subsemigroup of
      $\M^{0}[I,G,\Lambda;P]$;
    \item\label{item-prop-union}
      $J(M)$ is regular, $J(M)\cap M$ is a union of $\H$-classes of
      $J(M)$, and $(M\cap J(M))\phi$ equals one of the following:
      \begin{enumerate}[label=\emph{(\roman*)}, ref=\roman*, resume]
        \item\label{item-prop-rect} 
          $(I\times G\times\Lambda) \setminus (I'\times
          G\times\Lambda')$ for some $\varnothing\neq
          I'\subsetneq I$ and $\varnothing\neq \Lambda'\subsetneq \Lambda$.
          In this case, $\big((I\times G\times
          \Lambda) \setminus (I'\times G\times\Lambda')\big)\cup\{0\}$ is a
          maximal subsemigroup of $\M^0[I,G,\Lambda;P]$;

        \item\label{item-prop-cols}
          $I \times G \times \Lambda'$ for some $\varnothing \neq \Lambda'
          \subsetneq \Lambda$;

        \item\label{item-prop-rows}
          $I' \times G \times \Lambda$ for some $\varnothing \neq I' \subsetneq
          I$;

        \item\label{item-prop-empty}
          $\varnothing$.
      \end{enumerate}
  \end{enumerate}
\end{prop}

The principal difference between the main proposition of~\cite{Graham1968aa}
and Proposition~\ref{prop-graham} is that in the latter the isomorphism $\phi$
is arbitrary.  We will show how to effectively determine the maximal
subsemigroups of each type that arise in a given finite semigroup.  Certain
cases reduce to other well-known problems, such as, for example, finding all of
the maximal cliques in a graph, and computing the maximal subgroups of a finite
group. The following result describes the maximal subsemigroups of a
finite regular Rees 0-matrix semigroup over a group, and is central
to the presented algorithms.

\begin{prop}[cf. Theorem 4 in~\cite{Graham1968ab}]\label{prop-rms}
  Let $R=\M^{0}[I, G, \Lambda; P]$ be a finite regular Rees 0-matrix
  semigroup over a group $G$, and let $M$ be a subset of $R$. Then $M$ is
  a maximal subsemigroup of $R$ if and only if one of the following holds:
  \begin{enumerate}[label=\emph{(R\arabic*)}, ref=R\arabic*]
      
    \item\label{item-size-2}
      $M=\{0\}$ and $|R|=2$;\footnote{Note that in~\cite[Theorem
      4]{Graham1968ab} it is incorrectly stated that $\{0\}$ is a maximal
      subsemigroup if $R\setminus\{0\}$ is a cyclic group of prime order $p$.
      If $p > 1$, then the identity of $R\setminus\{0\}$ and $0$ comprise a
      proper subsemigroup of $R$ strictly containing $\{0\}$, which is
      therefore not maximal.}

    \item\label{item-remove-zero}
      $M=R\setminus \{0\}$ and $R\setminus\{0\}$ is a subsemigroup 
      (or equivalently there are no entries in $P$ equal to $0$);
    
    \item\label{item-remove-lambda}
      $|\Lambda| > 1$, $M = \big(I\times G\times (\Lambda\setminus
      \{\lambda\})\big) \cup\{0\}$ for some $\lambda\in \Lambda$, and there is
      at least one non-zero entry in every row and every column of ${(p_{\mu,
      i})}_{\mu\in\Lambda\setminus\{\lambda\}, i\in I}$;
    
    \item\label{item-remove-i}
      $|I| > 1$, $M=\big((I\setminus\{i\})\times G\times \Lambda\big)\cup
      \{0\}$ for some $i\in I$, and there is at least one non-zero entry in
      every row and every column of ${(p_{\lambda, j})}_{\lambda\in\Lambda,
      j\in I\setminus \{i\}}$;

    \item\label{item-rect}
      $M = \big((I\times G\times \Lambda)\setminus(I'\times
      G\times\Lambda')\big)\cup\{0\}$ where $I' = I\setminus X$, $\Lambda' =
      \Lambda\setminus Y$, and $X$ and $Y$ are proper non-empty subsets of $I$
      and $\Lambda$, respectively, such that $X\cup Y$ is a maximal independent
      set in the Graham-Houghton graph of $R$;
    
    \item\label{item-subgroup}
      $M$ is a subsemigroup isomorphic to $\M^{0}[I, H, \Lambda; Q]$ where $H$
      is a maximal subgroup of $G$ and $Q$ is a $|\Lambda|\times |I|$ matrix
      over $H\cup \{0\}$.
  \end{enumerate}
\end{prop}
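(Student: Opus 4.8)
The statement is an ``if and only if'', and the forward implication is essentially an application of Proposition~\ref{prop-graham}. The relevant structural fact is that a finite \emph{regular} Rees $0$-matrix semigroup $R$ has exactly two $\J$-classes: the (regular) class $J:=R\setminus\{0\}$ and $\{0\}$. So if $M$ is a maximal subsemigroup of $R$, then by Lemma~\ref{lem-graham} either $R\setminus M\subseteq\{0\}$ or $R\setminus M\subseteq J$. In the first case $R\setminus M=\{0\}$ (as $M$ is proper), so $M=R\setminus\{0\}$, which is a subsemigroup precisely when $P$ has no zero entry; this is~\eqref{item-remove-zero}. In the second case $0\in M$ and $J(M)=J$, and since $J^*$ is naturally identified with $R=\M^0[I,G,\Lambda;P]$ we may apply Proposition~\ref{prop-graham} with $\phi=\id_R$. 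Case~\eqref{item-prop-nonreg} cannot occur because $R$ is regular; case~\eqref{item-prop-intersect} asserts precisely that $M\cong\M^0[I,H,\Lambda;Q]$ with $H$ a maximal subgroup of $G$, which is~\eqref{item-subgroup}; and in case~(c), $M\cap J$ is one of four explicit sets, and maximality of $M$ must be used to extract the remaining constraints. If $M\cap J=\varnothing$ then $M=\{0\}$, which is maximal only when $|R|=2$ --- otherwise, for an idempotent $e\in J$, the set $\{0,e\}$ is a proper subsemigroup strictly between $\{0\}$ and $R$ --- giving~\eqref{item-size-2}; this is exactly the situation corrected by the footnote. If $M=(I\times G\times\Lambda')\cup\{0\}$, then since $(I\times G\times\Lambda'')\cup\{0\}$ is a subsemigroup for every $\Lambda''\subseteq\Lambda$, maximality forces $|\Lambda\setminus\Lambda'|=1$; and $M$ fails to be maximal exactly when $M$ is not regular, i.e.\ when $P$ has a column all of whose nonzero entries lie in the omitted row $\lambda$ (then $M$ sits inside the proper subsemigroup $M\cup(\{i\}\times G\times\{\lambda\})$), giving~\eqref{item-remove-lambda}; the case $M=(I'\times G\times\Lambda)\cup\{0\}$ is dual and gives~\eqref{item-remove-i}. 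Finally, if $M=\big((I\times G\times\Lambda)\setminus(I'\times G\times\Lambda')\big)\cup\{0\}$, then with $X:=I\setminus I'$ and $Y:=\Lambda\setminus\Lambda'$, closure of $M$ forces $X\cup Y$ to be independent in $\Gamma(R)$ (an edge $\{j,\lambda\}$ with $j\in X$ and $\lambda\in Y$ would let $(i,e,\lambda)(j,e,\mu)$ escape $M$ for $i\in I'$, $\mu\in\Lambda'$), and maximality of $M$ then forces $X\cup Y$ to be a \emph{maximal} independent set, giving~\eqref{item-rect}.

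For the converse I would check directly that each of~\eqref{item-size-2}--\eqref{item-subgroup} is a maximal subsemigroup; cases~\eqref{item-size-2}--\eqref{item-rect} are routine. In~\eqref{item-size-2} the only subsemigroups of the two-element semigroup $R$ containing $0$ are $\{0\}$ and $R$; in~\eqref{item-remove-zero}, $M=R\setminus\{0\}$ differs from $R$ in a single element. In~\eqref{item-remove-lambda}, $M$ is closed (deleting the $\H$-classes with a fixed $\Lambda$-coordinate preserves closure), proper since $|\Lambda|>1$, and if $M'\supsetneq M$ contains some $(i,g,\lambda)$ then, using that column $i$ of $P$ has a nonzero entry in some row $\mu_0\neq\lambda$ (regularity of $M$) and left-multiplying $(i,g,\lambda)$ by all $(k,h,\mu_0)\in M$ as $(k,h)$ ranges over $I\times G$, we obtain $I\times G\times\{\lambda\}\subseteq M'$, so $M'=R$; \eqref{item-remove-i} is dual. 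In~\eqref{item-rect}, $M$ is closed because $X\cup Y$ is independent and proper because $I',\Lambda'\neq\varnothing$; if $M'\supsetneq M$ contains some $(i,g,\lambda)$ with $i\in I'$ and $\lambda\in\Lambda'$, then, since $X\cup Y$ is a \emph{maximal} independent set and neither $i$ nor $\lambda$ lies in it, $\lambda$ has a neighbour $j_0\in X$ and $i$ has a neighbour $\mu_0\in Y$; right-multiplying $(i,g,\lambda)$ by all $(j_0,h,\mu)\in M$ gives $\{i\}\times G\times\Lambda\subseteq M'$, and left-multiplying by all $(k,h,\mu_0)\in M$ then gives $J\subseteq M'$, so $M'=R$.

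The substantive case is~\eqref{item-subgroup}, and I expect it to be the main obstacle. The plan is to determine the structure of an arbitrary subsemigroup $N$ of $R$ containing $0$ and meeting every $\H$-class of $J$: modulo the automorphisms of $R$ obtained by conjugating the rows and columns of $P$ by elements of $G$, such an $N$ should take the normal form $N=\set{(i,g,\lambda)}{g\in K,\ i\in I,\ \lambda\in\Lambda}\cup\{0\}$ for a single subgroup $K\leq G$, and is then a subsemigroup if and only if every nonzero $p_{\lambda,i}$ lies in $K$. Once this is in place, the subsemigroups lying strictly between such an $N$ and $R$ correspond (up to these automorphisms) to the subgroups lying strictly between $K$ and $G$ --- any subsemigroup containing $N$ again meets every $\H$-class, so the normal form applies to it too --- and hence $N$ is maximal in $R$ if and only if $K$ is a maximal subgroup of $G$. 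Applied to an $M$ as in~\eqref{item-subgroup}, this gives the claim. The delicate points, which I expect to be where the real work lies, are: (i) establishing the normal form, where closure of $N$ must be propagated along the bipartite graph on $I\cup\Lambda$ (using regularity of $R$) to force the per-$\H$-class subgroups of $N$ into a single conjugacy class and then to straighten them simultaneously to one $K$; and (ii) verifying that a subsemigroup of $R$ abstractly isomorphic to $\M^0[I,H,\Lambda;Q]$ with $Q$ over $H\cup\{0\}$ really does meet every $\H$-class of $R$, so that this structure theory applies to it.
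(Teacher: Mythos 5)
The paper states this proposition without proof, presenting it as a corollary of Proposition~\ref{prop-graham} (citing Remark~1 of Graham--Graham--Rhodes); the substantive content is only supplied later, in Lemma~\ref{lem-isolated} for types~\eqref{item-remove-lambda} and~\eqref{item-remove-i} and in Lemmas~\ref{lem-block}--\ref{lem-super} and Proposition~\ref{thm-genset} for type~\eqref{item-subgroup}. Measured against that, your forward direction is correct and is the intended reduction: since a finite regular Rees 0-matrix semigroup over a group is completely $0$-simple, its only $\J$-classes are $R\setminus\{0\}$ and $\{0\}$, so Lemma~\ref{lem-graham} and Proposition~\ref{prop-graham} (with $\phi=\id$, using $J^*\cong R$) apply directly, and your supplementary arguments extracting the extra constraints from maximality (the $|R|=2$ condition, $|\Lambda\setminus\Lambda'|=1$ and regularity, independence of $X\cup Y$) are sound. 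Your converse arguments for \eqref{item-size-2}--\eqref{item-rect} are also correct; the only point you elide is that in promoting ``independent'' to ``maximal independent'' one must check that enlarging $X\cup Y$ by a vertex still leaves a non-empty excluded rectangle, which follows from regularity of $R$ (an independent set cannot contain all of $I$ or all of $\Lambda$).

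The genuine gap is the converse of \eqref{item-subgroup}, which you present as a plan rather than a proof. The ``normal form'' you assert --- that a subsemigroup containing $0$ and meeting every $\H$-class can be straightened, by row and column translations, to $I\times K\times\Lambda\cup\{0\}$ with every non-zero entry of the (modified) matrix in $K$ --- is true, but it is precisely the hard content of the paper's Lemma~\ref{lem-block}: one must first normalize $P$ (Proposition~\ref{prop-missing}), then propagate Green's Lemma along paths within each connected component of the Graham--Houghton graph to show each block $C_{k,l}$ equals $I_k\times g_k^{-1}Vg_l\times\Lambda_l$, and only then renormalize to a single subgroup; the freedom in the coset representatives $g_k$ across components is exactly why the paper needs Lemma~\ref{lem-genset-equiv} before it can set up the bijection with subgroups between $V$ and $G$. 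Your second flagged point is also a real obligation: the hypothesis in \eqref{item-subgroup} is only an abstract isomorphism $M\cong\M^0[I,H,\Lambda;Q]$, and one must argue that such an $M$ does meet every $\H$-class of $R$ before the structure theory applies. Neither step is carried out, and since this is where essentially all of the difficulty of the proposition lies (the paper spends most of Section~\ref{section-rees} on it), the proposal as written does not constitute a complete proof, even though its outline agrees with the paper's eventual treatment.
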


The algorithms we describe require knowledge of the Green's structure of $S$. 
We briefly discuss algorithms for determining the Green's structure of a finite
semigroup $S$.  If $S$ is a regular Rees 0-matrix semigroup $\mathscr{M}^0[I,
G, \Lambda; P]$ where $G$ is a group, then it can be represented on a computer
by a generating set for $G$ and the matrix $P$. The Green's structure of such a
semigroup can be obtained directly from $P$.  For an arbitrary finite semigroup
$S$, we suppose that $S$ is represented by a generating set $X$, and that we
know how to determine the value of $xy$ for any $x,y \in S$.  The generating
set $X$ for $S$ may consist, for example, of transformations, or partial
permutations, of a finite set, or of matrices over a semiring.  Algorithms for
computing the Green's structure of $S$ from $X$ include the Froidure-Pin
Algorithm~\cite{Froidure1997aa, Jonusas2017aa}, and the algorithms described
in~\cite{East2015ab}.  The worst case complexity of these algorithms is
$O(|S||X|)$, which for transformations and partial permutations is exponential
in the number of points on which they act, and for matrices over a semiring, is
exponential in their dimension. The Froidure-Pin Algorithm determines the right
and left Cayley graphs of $S$ with respect to $X$, and  Green's $\R$- and
$\L$-relations correspond to the strongly connected components of these graphs.
There are several well-known algorithms, such as those of Tarjan or Gabow, for
finding the strongly connected components of a graph.  The algorithms
in~\cite{East2015ab} directly enumerate the $\R$- and $\L$-classes of $S$, and
so for examples of semigroups with relatively large $\J$-classes, unlike the
Froidure-Pin Algorithm, the Green's structure of $S$ can be found without
storing every element of $S$ in memory.  In this way, we will henceforth
suppose that we are able to describe the Green's structure of a finite
semigroup. If computing the Green's structure of $S$ is not practical, because,
say, it requires too much time or space, then the algorithms presented here
cannot be used to find the maximal subsemigroups of $S$. In many examples, it
appears that the converse also holds; further details can be
found in Section~\ref{section-performance}. 

%%%%%%%%%%%%%%%%%%%%%%%%%%%%%%%%%%%%%%%%%%%%%%%%%%%%%%%%%%%%%%%%%%%%%%%%%%%%%%%%

\section{Regular Rees 0-matrix semigroups}\label{section-rees}

In this section, we discuss how to compute the maximal subsemigroups of a
finite regular Rees 0-matrix semigroup over a group. This has inherent
interest, but we will use the algorithms for Rees 0-matrix semigroups when
computing the maximal subsemigroups of an arbitrary finite semigroup. 

The possible types of maximal subsemigroups of a Rees 0-matrix semigroup are
described in Proposition~\ref{prop-rms}.  We begin this section by describing
how to compute the maximal subsemigroups of types~\eqref{item-size-2}
to~\eqref{item-rect}. Maximal subsemigroups of
type~\eqref{item-subgroup} are more complicated to determine, and the remainder
of this section is dedicated to describing an algorithm for finding the maximal
subsemigroups of this type. 

Throughout this section, we will denote by $R$ a finite regular Rees 0-matrix
semigroup $\M^{0}[I, G, \Lambda; P]$ over a group $G$.

%%%%%%%%%%%%%%%%%%%%%%%%%%%%%%%%%%%%%%%%%%%%%%%%%%%%%%%%%%%%%%%%%%%%%%%%%%%%%%%%

\subsection{Maximal subsemigroups of types
\eqref{item-size-2}--\eqref{item-rect}}

It is trivial to check whether there exists a maximal subsemigroup of
types~\eqref{item-size-2} and~\eqref{item-remove-zero}.  For the former, we
simply check whether $|R| = 2$; if it is then $\{0\}$ is a maximal subsemigroup
of $R$.  For the latter, it suffices to check whether the matrix $P$ contains
the element $0$; if it does not, then $R\setminus \{0\}$ is a maximal
subsemigroup of $R$.

We require the following straightforward reformulation of
\eqref{item-remove-lambda} and~\eqref{item-remove-i}.

\begin{lem}\label{lem-isolated}
  Let $R = \M^{0}[I, G, \Lambda; P]$ be a finite regular Rees $0$-matrix
  semigroup over a group.
  
  If $|\Lambda|>1$ and $\lambda\in \Lambda$, then $M=(I\times G\times
  \Lambda\setminus\{\lambda\}) \cup \{0\}$ is a maximal subsemigroup of $R$ if
  and only if every vertex in the subgraph of the Graham-Houghton graph of $R$
  induced by $I\cup (\Lambda\setminus\{\lambda\})$ has at least one incident
  edge.

  Likewise, if $|I| > 1$ and $i\in I$, then $M = (I\setminus\{i\}\times
  G\times \Lambda)\cup \{0\}$ is a maximal subsemigroup of $R$ if and only if
  every vertex in the subgraph induced by $(I\setminus\{i\})\cup \Lambda$ has
  at least one incident edge.
\end{lem}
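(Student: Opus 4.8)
The plan is to derive both statements directly from Proposition~\ref{prop-rms}, specifically from condition~\eqref{item-remove-lambda} (respectively~\eqref{item-remove-i}), by unwinding what ``$M$ is regular'' means in graph-theoretic terms. First I would fix $\lambda \in \Lambda$ with $|\Lambda| > 1$ and set $M = (I \times G \times (\Lambda\setminus\{\lambda\})) \cup \{0\}$. One checks immediately that $M$ is a subsemigroup of $R$: a product of two nonzero elements of $M$ is either $0$ or lies in $I \times G \times (\Lambda\setminus\{\lambda\})$, since the third coordinate of a nonzero product $(i,g,\mu)(k,h,\nu)$ is $\nu \in \Lambda\setminus\{\lambda\}$. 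By Proposition~\ref{prop-rms}, $M$ is a maximal subsemigroup of $R$ if and only if condition~\eqref{item-remove-lambda} holds, which — given that we have already fixed $\lambda$ and $|\Lambda|>1$ — reduces exactly to the requirement that $M$ be regular.

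The crux is therefore to show: $M$ is a regular semigroup if and only if every vertex of the subgraph $\Gamma'$ of the Graham--Houghton graph $\Gamma(R)$ induced by $I \cup (\Lambda\setminus\{\lambda\})$ has at least one incident edge. I would argue this via the standard characterization of regularity for Rees $0$-matrix semigroups: $\M^0[I', G, \Lambda'; P']$ (with $P'$ a submatrix of $P$) is regular precisely when every row and every column of $P'$ contains a nonzero entry. Here $M \cong \M^0[I, G, \Lambda\setminus\{\lambda\}; P']$ where $P' = (p_{\mu,i})_{\mu \in \Lambda\setminus\{\lambda\},\, i \in I}$ is obtained from $P$ by deleting the row indexed by $\lambda$. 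Now translate: a column of $P'$ indexed by $i \in I$ has a nonzero entry if and only if there exists $\mu \in \Lambda\setminus\{\lambda\}$ with $p_{\mu,i}\neq 0$, i.e.\ the vertex $i$ has an incident edge in $\Gamma'$; a row of $P'$ indexed by $\mu \in \Lambda\setminus\{\lambda\}$ has a nonzero entry if and only if $\mu$ has an incident edge in $\Gamma'$. Since the vertex set of $\Gamma'$ is exactly $I \cup (\Lambda\setminus\{\lambda\})$, ``every row and column of $P'$ has a nonzero entry'' is the same as ``every vertex of $\Gamma'$ has at least one incident edge,'' which establishes the equivalence. The second statement follows by the dual argument, deleting the column of $P$ indexed by $i$ and using that nonzero products $(k,h,\nu)(i',g,\mu)$ never have first coordinate $i$ once we restrict to $(I\setminus\{i\}) \times G \times \Lambda$; note also that regularity of $R$ itself guarantees $R$ is a semigroup in the first place, so there is no degeneracy to worry about.

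I do not anticipate a genuine obstacle here: the only point requiring care is the bookkeeping identification of $M$ with the Rees $0$-matrix semigroup over the submatrix $P'$ and the matching of ``empty row/column'' with ``isolated vertex,'' which is purely a matter of carefully stating the correspondence. The appeal to the row/column criterion for regularity is classical (it is implicit in the Rees Theorem, \cite[Theorem 3.2.3]{Howie1995aa}, together with the assumption that $R$ — and hence every such substructure we consider — is over a group), so I would either cite it or give the one-line verification that an element $(i,g,\mu)$ has an inverse in $M$ iff there exist $k$ and $\nu$ with $p_{\mu,k}\neq 0$ and $p_{\nu,i}\neq 0$. With that in hand, both halves of the lemma are immediate consequences of Proposition~\ref{prop-rms}.
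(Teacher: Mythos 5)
Your proposal is correct and follows essentially the same route as the paper: both arguments reduce the question to condition~\eqref{item-remove-lambda} of Proposition~\ref{prop-rms} (after observing that a set of this form can only be maximal via that condition) and then translate regularity of $M$ into the statement that no row or column of the submatrix ${(p_{\mu,i})}_{\mu\in\Lambda\setminus\{\lambda\},\, i\in I}$ is entirely zero, i.e.\ that the induced subgraph of the Graham--Houghton graph has no isolated vertex. The only difference is cosmetic: you spell out the subsemigroup check and the row/column criterion for regularity, whereas the paper treats both as routine.
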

\begin{proof}
  If $M = (I\times G\times \Lambda\setminus\{\lambda\})\cup \{0\}$ is a maximal
  subsemigroup of $R$, then it is routine to verify that $M$ is not a maximal
  subsemigroup of
  type~\eqref{item-size-2},~\eqref{item-remove-zero},~\eqref{item-remove-i},~\eqref{item-rect},
  or~\eqref{item-subgroup}.  Hence $M$ is a maximal subsemigroup of
  type~\eqref{item-remove-lambda}, and there is at
  least one non-zero entry in each row and each column of ${(p_{\mu,
  i})}_{\mu\in\Lambda\setminus\{\lambda\}, i\in I}$, and so the subgraph of the
  Graham-Houghton graph of $R$ induced by $I\cup (\Lambda\setminus\{\lambda\})$
  has no vertex without an incident edge.

  If $\lambda\in\Lambda$ is such that the subgraph of the Graham-Houghton graph
  of $R$ induced by $I\cup (\Lambda\setminus\{\lambda\})$ has no vertex without
  an incident edge, then it is straightforward to check that  $M = (I\times
  G\times \Lambda\setminus\{\lambda\})\cup \{0\}$
  satisfies~\eqref{item-remove-lambda}, and hence is maximal.

  The proof of the second part is dual.
\end{proof}

It is straightforward to check whether the conditions in
Lemma~\ref{lem-isolated} hold, and so it is easy to find the maximal
subsemigroups of types \eqref{item-remove-lambda} and~\eqref{item-remove-i}.
Computing the maximal subsemigroups of type~\eqref{item-rect} is equivalent to
computing the maximal cliques in the complement of the Graham-Houghton graph of
$R$.  This problem is well-understood (and hard); see
\cite{Bron1973},~\cite{Tomita2006aa}, and~\cite{Moon1965}. 

%%%%%%%%%%%%%%%%%%%%%%%%%%%%%%%%%%%%%%%%%%%%%%%%%%%%%%%%%%%%%%%%%%%%%%%%%%%%%%%%

\subsection{Maximal subsemigroups of type~\eqref{item-subgroup}}

It is more complicated to determine the maximal subsemigroups of
type~\eqref{item-subgroup} than it is to determine those of other types. We
require the following proposition. 

\begin{prop}[Section 4 in \cite{Graham1968ab}, Theorem 4.13.34 in
  \cite{Rhodes2009aa}] \label{prop-missing}
  Let $R$ be a regular finite Rees 0-matrix semigroup over a group $G$ with
  index sets $I$ and $\Lambda$, and let $n \in \mathbb{N}$ be the number of
  connected components of the Graham-Houghton graph of $R$. Then there exist
  non-empty subsets $I_{1}, \ldots, I_{n}$ of $I$ and $\Lambda_{1},
  \ldots, \Lambda_{n}$ of $\Lambda$ that partition $I$ and $\Lambda$,
  respectively, such that $I_{k} \cup \Lambda_{k}$ is a connected component of
  the Graham-Houghton graph for each $k$, and there exists a Rees 0-matrix
  semigroup  $R'=\M^0[I,G,\Lambda; P]$, where $P = {(p_{\lambda, i})}_{\lambda
  \in \Lambda, i \in I}$, which is isomorphic to $R$ and where the following hold:
  \begin{enumerate}[label=\emph{(\roman*)},ref=\roman*]
    \item 
      there are $i_{k} \in I_{k}$ and $\lambda_{k} \in \Lambda_{k}$
      such that $p_{\lambda_{k}, i_{k}} = 1_G$, for every $k$;
    
    \item 
      $\psi : R^{\prime} \setminus \{0\} \to G$ defined by
      $(i,g,\lambda)\mapsto g$ is an  isomorphism from
      $\{i\}\times G \times\{\lambda\}$ to $G$ whenever $p_{\lambda, i} = 1_G$;

    \item 
      $G_{k} = \big(\genset{E(R')} \cap (\{i_{k}\}\times G
      \times\{\lambda_{k}\})\big)\psi$ is a subgroup of $G$, for every $k$;
    
    \item 
      the matrix $P_k = {(p_{\lambda, i})}_{\lambda \in \Lambda_{k}, i
      \in I_{k}}$ consists of elements in $G_k \cup \{0\}$ and contains a
      generating set for $G_k$, for every $k$.
  \end{enumerate}
\end{prop}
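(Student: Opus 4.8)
The plan is to construct $R'$ by normalising $P$ one connected component at a time, and then to study $\genset{E(R')}$ inside a single group $\H$-class. First I would fix the combinatorial data. Regularity of $R$ forces every row and every column of $P$ to contain a nonzero entry, so no vertex of the Graham-Houghton graph $\Gamma(R)$ is isolated; since $\Gamma(R)$ is bipartite with parts $I$ and $\Lambda$, each of its $n$ connected components $C_k$ meets both $I$ and $\Lambda$. I set $I_k=C_k\cap I$ and $\Lambda_k=C_k\cap\Lambda$, so $I_1,\dots,I_n$ and $\Lambda_1,\dots,\Lambda_n$ partition $I$ and $\Lambda$ into non-empty sets with $I_k\cup\Lambda_k=C_k$. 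For each $k$ I fix a spanning tree $T_k$ of the subgraph of $\Gamma(R)$ induced by $C_k$, root it at some $i_k\in I_k$, and let $\lambda_k\in\Lambda_k$ be a tree-neighbour of $i_k$. Using the standard fact that replacing every matrix entry $p_{\lambda,i}$ by $u_\lambda\,p_{\lambda,i}\,v_i$, for arbitrary $u_\lambda,v_i\in G$ and with $0$ fixed, yields an isomorphic Rees $0$-matrix semigroup, I would choose the $u_\lambda$ and $v_i$ so that the new matrix — which I again denote $P$ — satisfies $p_{\lambda,i}=1_G$ for every edge $\{i,\lambda\}$ of every $T_k$; this is possible because each $T_k$ is a tree, so these constraints are non-circular, and for different $k$ they involve disjoint indices. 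With $R'=\M^{0}[I,G,\Lambda;P]$ the resulting semigroup, (i) holds by construction, and (ii) is immediate: $p_{\lambda,i}=1_G$ gives $(i,g,\lambda)(i,h,\lambda)=(i,gh,\lambda)$.

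For (iii), fix $k$ and write $H_k=\{i_k\}\times G\times\{\lambda_k\}$. Since $p_{\lambda_k,i_k}=1_G$, $H_k$ is a group with identity $(i_k,1_G,\lambda_k)\in E(R')$, on which $\psi$ restricts to an isomorphism onto $G$ by (ii). Being an intersection of subsemigroups of $R'$, the set $\genset{E(R')}\cap H_k$ is a subsemigroup of the finite group $H_k$ that contains the idempotent $(i_k,1_G,\lambda_k)$, hence is a subgroup of $H_k$; therefore $G_k=(\genset{E(R')}\cap H_k)\psi$ is a subgroup of $G$.

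For (iv) I would first establish, by induction on distance in $T_k$, the auxiliary fact that $(i_k,1_G,\lambda)\in\genset{E(R')}$ for every $\lambda\in\Lambda_k$ and $(i,1_G,\lambda_k)\in\genset{E(R')}$ for every $i\in I_k$: a tree edge $\{i,\lambda\}$ yields the idempotent $(i,1_G,\lambda)=(i,p_{\lambda,i}^{-1},\lambda)$, and the inductive step multiplies the element supplied by the induction hypothesis by a suitable tree-edge idempotent, as in $(i_k,1_G,\mu)\,(j,1_G,\lambda)=(i_k,p_{\mu,j},\lambda)=(i_k,1_G,\lambda)$ when $\{j,\mu\}$ and $\{j,\lambda\}$ are tree edges. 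Granting this, given a nonzero entry $p_{\lambda,i}$ with $i\in I_k$, $\lambda\in\Lambda_k$, I choose a tree-neighbour $\mu\in\Lambda_k$ of $i$ and a tree-neighbour $j\in I_k$ of $\mu$; then the product $(i_k,1_G,\lambda)\,(i,1_G,\mu)\,(j,1_G,\lambda_k)$ of elements of $\genset{E(R')}$, whose middle factor is the idempotent on the tree edge $\{i,\mu\}$, telescopes (using $p_{\mu,j}=1_G$) to $(i_k,p_{\lambda,i},\lambda_k)$, so $p_{\lambda,i}=(i_k,p_{\lambda,i},\lambda_k)\psi\in G_k$; hence every entry of $P_k$ lies in $G_k\cup\{0\}$. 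Conversely, if $(i_k,g,\lambda_k)\in\genset{E(R')}$, then $(i_k,g,\lambda_k)=e_1\cdots e_m$ for some nonzero idempotents $e_r=(a_r,p_{\mu_r,a_r}^{-1},\mu_r)$; the product being nonzero forces $p_{\mu_r,a_{r+1}}\ne0$ for $1\le r<m$, and multiplying out yields $g=p_{\mu_1,a_1}^{-1}p_{\mu_1,a_2}p_{\mu_2,a_2}^{-1}p_{\mu_2,a_3}\cdots p_{\mu_{m-1},a_m}p_{\mu_m,a_m}^{-1}$ with $a_1=i_k$ and $\mu_m=\lambda_k$. Consecutive terms of $a_1,\mu_1,a_2,\mu_2,\dots,a_m,\mu_m$ are joined by edges of $\Gamma(R)$, so starting from $a_1=i_k\in I_k$ an easy induction gives $a_r\in I_k$ and $\mu_r\in\Lambda_k$ for all $r$; hence every factor of the displayed product for $g$ is a nonzero entry of $P_k$ or the inverse of one. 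Combining the two directions shows that $G_k=(\genset{E(R')}\cap H_k)\psi$ is exactly the subgroup of $G$ generated by the nonzero entries of $P_k$, which gives (iv).

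The component bookkeeping and the normalisation of $P$ are routine. The substance is in (iv): one must verify that the two ad hoc products of idempotents and tree elements really telescope to the claimed members of $H_k$ — every intermediate product must be nonzero, which is exactly what normalising all tree edges to $1_G$ secures — and one must keep track simultaneously of the $\H$-class and of the precise alternating word in matrix entries carried by each partial product of idempotents, in order to see that such a word landing in $H_k$ cannot leave the component $C_k$. That two-sided bookkeeping is where I expect the main difficulty to lie.
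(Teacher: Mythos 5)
The paper does not prove Proposition~\ref{prop-missing}: it is quoted from Section~4 of Graham's 1968 paper and used as a black box, so there is no internal proof to compare against. Your argument is correct and is the standard one for this result: normalize $P$ along a spanning tree of each connected component of the Graham--Houghton graph (valid because the tree constraints on the $u_\lambda, v_i$ are non-circular and the components involve disjoint indices), observe that $\genset{E(R')}\cap\big(\{i_k\}\times G\times\{\lambda_k\}\big)$ is a non-empty subsemigroup of a finite group and hence a subgroup, and identify $G_k$ with the subgroup generated by the non-zero entries of $P_k$ by tracking the alternating word in matrix entries carried by a product of idempotents. The two points you flag as delicate are both handled correctly: the intermediate products in your telescoping computations are non-zero precisely because the tree entries are $1_G$, and in the converse direction of (iv) the index sequence $a_1,\mu_1,a_2,\ldots,\mu_m$ of a non-zero product of idempotents is a walk in the graph starting at $i_k$, so it cannot leave the component $C_k$.
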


We refer to a Rees 0-matrix semigroup satisfying the conditions of
Proposition~\ref{prop-missing} as being \textit{normalized}.  
Normalizing  a finite Rees 0-matrix semigroup $R = \M^0[I,G,\Lambda; P]$ 
consists of  finding the connected components of the Graham-Houghton graph of $R$
(time complexity $O(|I||\Lambda|)$), and multiplying every non-zero
entry in $P$ by some group element at most twice (complexity $O(|I||\Lambda|)$).
Hence such an $R$ can be normalized in time polynomial in $|I| |\Lambda|$.
For more information about Rees $0$-matrix semigroups and its connections to
graph theory, and normalization, we refer the reader to~\cite{Rhodes2009aa}.
For this point on, we suppose without loss of generality that
$R=\M^0[I,G,\Lambda;P]$ is a normalized regular finite Rees 0-matrix semigroup
where $G$ is a group, $I$ and $\Lambda$ are disjoint index sets, and $P =
{(p_{\lambda, i})}_{\lambda \in \Lambda, i \in I}$ is a $|\Lambda| \times |I|$
matrix over $G \cup \{ 0 \}$.

By~\cite[Theorem 2]{Graham1968ab}, each connected component of the
Graham-Houghton graph of $R$ corresponds to a regular Rees $0$-matrix semigroup
\begin{equation*}
  \left( I_{k} \times G_{k} \times \Lambda_{k} \right) \cup \{ 0 \}\
   =  \mathscr{M}^{0}[I_{k}, G_{k}, \Lambda_{k}; P_{k}],
\end{equation*}
and the idempotent generated subsemigroup $\genset{E(R)}$ of $R$ is the union
\begin{equation}\label{eq-idem}
    \bigcup\limits_{k = 1}^{n}
  \mathscr{M}^{0}[I_{k}, G_{k}, \Lambda_{k}; P_{k}].
\end{equation}

Throughout this section we let $S$ be a subsemigroup of $R$, and we use
the following notation:

\begin{itemize}
  \item 
    for $i \in I$ and $\lambda \in \Lambda$, let $H_{i, \lambda} = S \cap
    \left( \{ i \} \times G \times \{ \lambda \} \right)$ \emph{(the
    intersection of $S$ with an $\H$-class of $R$)};

  \item 
    for $k, l \in \{ 1, \ldots, n \}$, let $C_{k, l} = S \cap \left( I_{k}
    \times G \times \Lambda_{l} \right)$ \emph{(the intersection of $S$ with a
    block of $\H$-classes of $R$)}.
\end{itemize}

Observe that with this definition $C_{k, l} = \bigcup_{i \in I_{k},
\lambda \in \Lambda_{l}} H_{i, \lambda}$, and $S = \{0\} \cup \bigcup_{1 \leq
k, l \leq n} C_{k, l}$.

Note that if $S$ is regular, then $\H^{S} = \H^{R} \cap (S \times S)$
by~\cite[Proposition~2.4.2]{Howie1995aa}.  Therefore, if $S$ is regular, then
the non-zero $\H$-classes of $S$ are the non-empty sets of the form
$H_{i,\lambda}$.  In particular, if $S$ intersects each $\H$-class of $R$
non-trivially, then (since in this case $S$ is necessarily regular) it follows
that $H_{i,\lambda}$ is an $\H$-class of $S$ for all $i\in I$ and $\lambda\in
\Lambda$.

We next state three technical lemmas that describe the subsemigroups of $R$
that intersect every $\H$-class of $R$ non-trivially.

\begin{lem}\label{lem-block}
  The subsemigroup $S$ intersects every $\H$-class of $R$
  non-trivially if and only if $0\in S$ and there exists a subgroup $V$ of $G$
  and elements $g_1 = 1_G, g_{2}, \ldots, g_{n} \in G$ such that $G_{k}\leq
  g_{k}^{-1}Vg_{k}$ and $C_{k,l}=I_{k}\times g_{k}^{-1}Vg_{l}
  \times\Lambda_{l}$ for all $k,l \in \{1,\ldots,n\}$.
\end{lem}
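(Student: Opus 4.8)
The plan is to prove both implications directly by unwinding the multiplication in the normalized Rees $0$-matrix semigroup, using the idempotent structure recorded in Proposition~\ref{prop-missing}. For the forward direction, suppose $S$ intersects every $\H$-class of $R$ non-trivially. First I would observe that $S$ is then necessarily regular (it meets every $\H$-class, each of which contains an idempotent in a regular $\J$-class, so products land back in $S$ appropriately), and that $0 \in S$ since $R$ is a $0$-simple-type semigroup and any two $\H$-classes with a zero entry between them multiply to give $0$. Next, using the normalization, I would fix the distinguished entries $p_{\lambda_k, i_k} = 1_G$ and consider, for each $k$, the $\H$-class $\{i_k\} \times G \times \{\lambda_k\}$, which under $\psi$ is identified with $G$; the set $C_{k,k} \cap (\{i_k\}\times G\times\{\lambda_k\})$ then corresponds to a subset of $G$ closed under the group operation (because $p_{\lambda_k,i_k}=1_G$ makes this $\H$-class a subgroup of $R$), hence a subgroup. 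I would set $V$ to be the subgroup arising from $k=1$ (where $g_1 = 1_G$), and then define $g_k$ so that it records the "offset" between the subgroup sitting in $C_{k,k}$ and $V$: concretely, pick a non-zero entry realizing an edge in the connected component $I_k \cup \Lambda_k$ and chase it through the connecting paths to the component containing $i_1, \lambda_1$, extracting the group element that conjugates one local subgroup to the other.

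The heart of the forward direction is then to show $C_{k,l} = I_k \times g_k^{-1} V g_l \times \Lambda_l$ for \emph{all} pairs $k, l$, not just the diagonal ones. Here I would argue as follows: take arbitrary $i \in I_k$, $\lambda \in \Lambda_l$ and use that within a connected component the Graham--Houghton graph is connected, so there is a path from $i$ (resp. $\lambda$) to $i_k$ (resp. $\lambda_l$); multiplying an element of $H_{i,\lambda} \subseteq S$ on the left and right by suitable idempotent-supported elements of $S$ (which exist because $S$ meets every $\H$-class and contains the idempotents of $\genset{E(R)}$ by the decomposition~\eqref{eq-idem}) translates $H_{i,\lambda}$ to $H_{i_k, \lambda_l}$, and this translation is by a fixed group element independent of which element of $H_{i,\lambda}$ we chose. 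This shows each $H_{i,\lambda}$ is a full coset of the form $\{i\} \times (\text{fixed element}) \cdot g_k^{-1}Vg_l \cdot (\text{fixed element}) \times \{\lambda\}$, and after absorbing these fixed translating elements using the normalization ($p_{\lambda_k,i_k}=1_G$ and condition (ii) of Proposition~\ref{prop-missing}) one gets exactly $H_{i,\lambda} = \{i\} \times g_k^{-1}Vg_l \times \{\lambda\}$. Finally $G_k \leq g_k^{-1}Vg_k$ follows because $G_k = \genset{E(R)}$-part of the $(i_k,\lambda_k)$ $\H$-class, which lies inside $C_{k,k} \cap (\{i_k\}\times G\times\{\lambda_k\}) = g_k^{-1}Vg_k$.

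For the converse, suppose $0 \in S$, and $V$, $g_1 = 1_G, \ldots, g_n$ are given with $G_k \leq g_k^{-1}Vg_k$ and $C_{k,l} = I_k \times g_k^{-1}Vg_l \times \Lambda_l$. Clearly $S$ meets every $\H$-class of $R$ non-trivially, so it remains only to check $S$ is a subsemigroup. I would take $(i,a,\mu) \in C_{k,l}$ and $(j,b,\nu) \in C_{m,p}$ and compute the product: if $p_{\mu,j} = 0$ the product is $0 \in S$; otherwise $p_{\mu,j} \neq 0$ forces the edge $\{j,\mu\}$ to be present, so $j \in I_m$ and $\mu \in \Lambda_l$ lie in the same connected component, giving $l = m$, and moreover $p_{\mu,j} \in G_l \subseteq g_l^{-1}Vg_l$ by condition (iv) of Proposition~\ref{prop-missing}. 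Then $a \in g_k^{-1}Vg_l$, $p_{\mu,j} \in g_l^{-1}Vg_l$, $b \in g_l^{-1}Vg_p$, so $a\, p_{\mu,j}\, b \in g_k^{-1}Vg_l \cdot g_l^{-1}Vg_l \cdot g_l^{-1}Vg_p = g_k^{-1}V g_p$ since $V$ is a group; hence $(i, a\,p_{\mu,j}\,b, \nu) \in C_{k,p} \subseteq S$. The main obstacle I anticipate is the bookkeeping in the forward direction: verifying that the translating group elements obtained by chasing paths in the Graham--Houghton graph are well-defined (independent of the chosen path) and interact consistently across all the components to yield a single subgroup $V$ and a coherent system $g_1, \ldots, g_n$ — this is where the connectivity of components and the normalization hypotheses must be used most carefully, and it is essentially the content of Graham's original structure theory that Proposition~\ref{prop-missing} packages for us.
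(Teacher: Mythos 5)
Your converse direction is correct but does more than is needed: $S$ is a subsemigroup of $R$ by standing assumption in this section, so given $C_{k,l}=I_{k}\times g_{k}^{-1}Vg_{l}\times\Lambda_{l}$ for all $k,l$ (and $0\in S$) one only has to observe that each coset $g_k^{-1}Vg_l$ is non-empty; the paper dismisses this implication as immediate. (Similarly, $0\in S$ in the forward direction is free because $\{0\}$ is itself an $\H$-class of $R$; your argument via zero matrix entries would fail if $P$ had none.)

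The forward direction, however, has a genuine gap in how the elements $g_2,\ldots,g_n$ are obtained. You propose to ``pick a non-zero entry realizing an edge in the connected component $I_k\cup\Lambda_k$ and chase it through the connecting paths to the component containing $i_1,\lambda_1$,'' but distinct connected components of the Graham--Houghton graph are, by definition, joined by no path, and every entry $p_{\lambda,i}$ with $\lambda\in\Lambda_l$, $i\in I_k$, $k\neq l$, equals $0$: there is nothing in $P$ to chase between components, so $g_k$ cannot be extracted from the matrix. Nor can $g_k$ be characterised as ``the group element that conjugates one local subgroup to the other'': that determines $g_k$ only up to $N_G(V)$, and when $V$ is normal in $G$ all the local subgroups coincide with $V$ while the off-diagonal block $C_{1,k}$ can still be a non-trivial coset $I_1\times Vg_k\times\Lambda_k$, so a conjugator alone does not recover the correct coset. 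The source of $g_k$ has to be the subsemigroup $S$ itself: since $S$ meets the $\H$-class $H_{i_1,\lambda_k}$, one may choose $(i_1,g_k,\lambda_k)\in S$ and then, using $p_{\lambda_1,i_1}=p_{\lambda_k,i_k}=1_G$ and Green's Lemma, compute $S\cap H_{i_1,\lambda_k}=\{i_1\}\times Vg_k\times\{\lambda_k\}$, $S\cap H_{i_k,\lambda_1}=\{i_k\}\times g_k^{-1}V\times\{\lambda_1\}$ and $S\cap H_{i_k,\lambda_k}=\{i_k\}\times g_k^{-1}Vg_k\times\{\lambda_k\}$; the general block $C_{k,l}$ then follows from the factorisation $H_{i,\lambda}=H_{i,\lambda_k}H_{i_k,\lambda_1}H_{i_1,\lambda_l}H_{i_l,\lambda}$. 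Your within-component path argument (translating $H_{i,\lambda}$ to $H_{i_k,\lambda_l}$ via $\genset{E(R)}$) is sound and matches the paper, because there the accumulated matrix entries lie in $G_k\leq g_k^{-1}Vg_k$ and are absorbed; it is only the cross-component step that your construction cannot supply.
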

\begin{proof}
  ($\Leftarrow$) The converse implication is immediate.

  ($\Rightarrow$)
  Recall that $p_{\lambda_k, i_k} = 1_G$ for all $k$, and so by Green's Lemma
  (Lemma~\ref{lem-green})
  $$H_{j, \mu} = H_{j,\lambda_{k}}(i_{k}, g, \mu) = (j, g, \lambda_k)
  H_{i_k,\mu},\qquad\text{and so}\qquad
  H_{j, i_{k}}H_{\lambda_k, \mu} = H_{j, \mu}$$
  for all $j\in I$, $\mu\in \Lambda$, and $g\in G$.

  Since $p_{\lambda_1, i_1} = 1_G$, it follows that the $\H$-class $H_{i_{1},
  \lambda_{1}}$ is a group, and that $H_{i_1,\lambda_1} = \{i_{1}\} \times V
  \times \{\lambda_{1}\}$ for some subgroup $V$ of $G$.
  By assumption, for each $k \in \{2,\ldots,n\}$ there exists an element
  $(i_{1},g_{k},\lambda_{k}) \in H_{i_{1},\lambda_{k}}$, and so   
  $$H_{i_{1}, \lambda_{k}} = H_{i_{1},\lambda_{1}}(i_{1}, g_{k},
  \lambda_{k}) = \{i_{1}\} \times V p_{i_{1},\lambda_{1}} g_{k} \times
  \{\lambda_{k}\} = \{i_{1}\} \times V g_{k} \times \{\lambda_{k}\}.$$
  Similarly, by assumption, for each $k\in\{2,\ldots,n\}$ the $\H$-class
  $H_{i_{k},\lambda_{1}}$ equals $\{i_{k}\} \times F_k \times \{\lambda_{1}\}$,
  where $F_k$ is some non-empty subset of $G$.  It follows that 
  $$H_{i_1,\lambda_1} = (i_{1}, g_{k}, \lambda_{k}) H_{i_k,\lambda_1} =
  \{i_1\}\times g_k F_k \times \{\lambda_1\},$$
  but as shown previously, $H_{i_1,\lambda_1} = \{i_{1}\}
  \times V \times \{\lambda_{1}\}$, and so $F_k = g_{k}^{-1} V$.
  Hence
  $$H_{i_{k},\lambda_{k}} = H_{i_{k},\lambda_{1}} H_{i_{1},\lambda_{k}} =
  \{i_{k}\}\times g_{k}^{-1} V g_{k} \times \{\lambda_{k}\}.$$

  Since $S$ is a finite semigroup that intersects every $\H$-class of $R$,
  it contains $E(R)$, and so it contains $\genset{E(R)}$.
  Therefore $H_{i_{k},\lambda_{k}}$ contains the elements
  $\genset{E(R)} \cap \left(\{i_{k}\}\times G \times \{\lambda_{k}\}\right)$,
  and so
  $$G_{k} = \big( \genset{E(R)} \cap \left(\{i_{k}\}\times G \times
  \{\lambda_{k}\}\right) \big)\psi \leq (H_{i_{k},\lambda_{k}})\psi
  = g_{k}^{-1}Vg_{k}$$
  for each $k$, as required.

  Let $k \in \{1, 2, \ldots, n\}$ and let $i \in I_{k} \setminus \{i_{k}\}$.
  Since the indices $i$ and $i_{k}$ are in the same connected component of the
  Graham-Houghton graph of $R$, there exists a path in the graph from $i$ to
  $i_{k}$.  That is, there exists an alternating sequence $(i = a_{1}, b_{1},
  a_{2}, b_{2}, \ldots, a_{m} = i_{k})$ of indices from $I_{k}$ and
  $\Lambda_{k}$, respectively, such that $p_{b_{j},a_{j}} \neq 0$ and
  $p_{b_{j},a_{j+1}} \neq 0$ for all possible $j$. Therefore $x =
  \prod_{j=1}^{m-1}(a_{j}, p_{b_{j},a_{j}}^{-1}, b_{j}) \in \genset{E(R)}
  \leq S$.
  It follows that
  $$x H_{i_{k}, \lambda_{k}} = \{i\} \times p_{b_{1},a_{1}}^{-1} 
  p_{b_{1},a_{2}} \cdots p_{b_{m-1},a_{m-1}}^{-1} p_{b_{m-1},a_{m}}
  g_{k}^{-1} V g_{k} \times \{\lambda_{k}\} \subseteq H_{i,\lambda_{k}}.$$
  By Proposition~\ref{prop-missing}(iv), this product of matrix entries is
  contained in the subgroup $G_{k} \leq g_{k}^{-1}Vg_{k}$. Therefore the
  expression simplifies to $x H_{i_{k}, \lambda_{k}} = \{i\} \times
  g_{k}^{-1}Vg_{k} \times \{\lambda_{k}\} \subseteq H_{i,\lambda_{k}}$. It
  follows that $(i, 1_{G}, \lambda_{k}) \in H_{i,\lambda_{k}}$ and so
  $$H_{i,\lambda_{k}} = (i,1_{G},\lambda_{k}) H_{i_{k},\lambda_{k}} = \{i\}
  \times g_{k}^{-1}Vg_{k} \times \{\lambda_{k}\}.$$
  By a similar argument $H_{i_{k},\lambda} = \{i_{k}\} \times g_{k}^{-1}Vg_{k}
  \times \{\lambda\}$ for all $\lambda \in \Lambda_{k}$. Hence for any $i
  \in I_{k}$, $\lambda \in \Lambda_{k}$
  $$H_{i,\lambda} = H_{i,\lambda_{k}} H_{i_{k},\lambda} = \{i\} \times
  g_{k}^{-1}Vg_{k}p_{\lambda_{k},i_{k}}g_{k}^{-1}Vg_{k} \times \{\lambda\} =
  \{i\} \times g_{k}^{-1} V g_{k} \times \{\lambda\},$$
  i.e. $C_{k,k} = I_{k} \times g_{k}^{-1}Vg_{k} \times \Lambda_{k}$.
  To conclude the proof let $k,l \in \{1,\ldots,n\}$ be arbitrary and let
  $H_{i,\lambda}$ be an $\H$-class of $S$ in $C_{k,l}$ (i.e. $i\in I_{k}$ and
  $\lambda\in\Lambda_{l}$).  
  Then we see that
  \begin{align}
    H_{i, \lambda}
    & = H_{i,\lambda_{k}}
        H_{i_{k},\lambda_{1}}
        H_{i_{1},\lambda_{l}}
        H_{i_{l},\lambda}
        &\label{eq-H-classes}\\
    & = ( \{i\} \times g_{k}^{-1} V g_{k} \times \{\lambda_{k}\} )
        ( \{i_{k}\} \times g_{k}^{-1} V \times \{\lambda_{1}\} )
        ( \{i_{1}\} \times V g_{l} \times \{\lambda_{l}\} )
        ( \{i_{l}\} \times g_{l}^{-1} V g_{l} \times \{\lambda\} )
        &\nonumber\\
    & = \{i\} \times
          g_{k}^{-1} V g_{k}
          p_{\lambda_{k}, i_{k}}
          g_{k}^{-1} V
          p_{\lambda_{1}, i_{1}}
          V g_{l}
          p_{\lambda_{l}, i_{l}}
          g_{l}^{-1} V g_{l}
        \times \{\lambda\}
        &\nonumber\\
    & = \{i\} \times g_{k}^{-1} V g_{l} \times \{\lambda\},
        &\nonumber
  \end{align}
  and so $C_{k,l}=I_{k} \times g_{k}^{-1} V g_{l} \times \Lambda_{l}$.
\end{proof}

\begin{lem}\label{lem-genset}
  Let $V$ be a subgroup of $G$ and let $g_{1} = 1_{G}, g_{2},\ldots,g_{n} \in
  G$ be elements such that $G_{k}\leq g_{k}^{-1}Vg_{k}$.
  Then $0\in S$ and $C_{k,l}=I_{k}\times g_{k}^{-1}Vg_{l} \times\Lambda_{l}$
  for all $k,l \in \{1,\ldots,n\}$
  if and only if
  $S = \genset{E(R),\{i_{1}\}\times V \times\{\lambda_{1}\},
  x_{2},\ldots,x_{n}, y_{2},\ldots,y_{n}}$, where
  $x_{k}=(i_{1},g_{k},\lambda_{k})$ and $y_{k}=(i_{k},g_{k}^{-1},\lambda_{1})$
  for all $k$.
\end{lem}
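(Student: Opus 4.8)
The plan is to show that the whole statement reduces to a single set identity. Write $U=\genset{E(R),\ \{i_1\}\times V\times\{\lambda_1\},\ x_2,\dots,x_n,\ y_2,\dots,y_n}$ for the subsemigroup on the right, and put $T=\{0\}\cup\bigcup_{1\le k,l\le n}I_k\times g_k^{-1}Vg_l\times\Lambda_l$. I claim that $U=T$. Granting this, the equivalence is immediate: if $S=U$, then $S=T$, so $0\in S$ and $C_{k,l}=S\cap(I_k\times G\times\Lambda_l)=I_k\times g_k^{-1}Vg_l\times\Lambda_l$ for all $k,l$ (the $I_k$ partition $I$ and the $\Lambda_l$ partition $\Lambda$); conversely, if $0\in S$ and $C_{k,l}=I_k\times g_k^{-1}Vg_l\times\Lambda_l$ for all $k,l$, then $S=\{0\}\cup\bigcup_{k,l}C_{k,l}=T=U$.

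For $U\subseteq T$, I would verify that $T$ is a subsemigroup of $R$ containing every generator of $U$. The generators lie in $T$: $\{i_1\}\times V\times\{\lambda_1\}=\{i_1\}\times g_1^{-1}Vg_1\times\{\lambda_1\}$ since $g_1=1_G$; $x_k=(i_1,g_k,\lambda_k)\in I_1\times g_1^{-1}Vg_k\times\Lambda_k$ and $y_k=(i_k,g_k^{-1},\lambda_1)\in I_k\times g_k^{-1}Vg_1\times\Lambda_1$ since $1_G\in V$; and, by~\eqref{eq-idem}, $E(R)\subseteq\genset{E(R)}=\{0\}\cup\bigcup_k I_k\times G_k\times\Lambda_k\subseteq T$, using the hypothesis $G_k\le g_k^{-1}Vg_k$. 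Closure of $T$ is a short computation in $R$: a product of an element of $I_k\times g_k^{-1}Vg_l\times\Lambda_l$ and one of $I_r\times g_r^{-1}Vg_s\times\Lambda_s$ is $0\in T$ unless the entry $p_{\lambda,i'}$ linking the adjacent coordinates $\lambda\in\Lambda_l$, $i'\in I_r$ is non-zero, which forces $l=r$ since an edge of the Graham--Houghton graph joins vertices of a common connected component; and then $p_{\lambda,i'}\in G_l\le g_l^{-1}Vg_l$ by Proposition~\ref{prop-missing}(iv), so $g_l p_{\lambda,i'}g_l^{-1}\in V$ and the product lies in $I_k\times g_k^{-1}Vg_s\times\Lambda_s\subseteq T$. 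Hence $U\subseteq T$.

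The inclusion $T\subseteq U$ is the substantive direction. Using $p_{\lambda_1,i_1}=1_G$ (Proposition~\ref{prop-missing}(i)), a direct computation of products in $R$ gives the set identities
\[
(\{i_1\}\times V\times\{\lambda_1\})\,x_l=\{i_1\}\times Vg_l\times\{\lambda_l\}
\qquad\text{and}\qquad
y_k\,(\{i_1\}\times V\times\{\lambda_1\})=\{i_k\}\times g_k^{-1}V\times\{\lambda_1\},
\]
and their product (using $p_{\lambda_1,i_1}=1_G$ again) equals $\{i_k\}\times g_k^{-1}Vg_l\times\{\lambda_l\}$; hence $\{i_k\}\times g_k^{-1}Vg_l\times\{\lambda_l\}\subseteq U$ for all $k,l$, the cases $k=1$ and $l=1$ being covered directly by the two identities since $g_1=1_G$. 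To replace the distinguished indices $i_k,\lambda_l$ by arbitrary $i\in I_k$, $\lambda\in\Lambda_l$, I would argue exactly as in the proof of Lemma~\ref{lem-block}: connectedness of the component $I_k\cup\Lambda_k$ gives a path from $i$ to $i_k$ in the Graham--Houghton graph, the corresponding product of idempotents $w=\prod_j(a_j,p_{b_j,a_j}^{-1},b_j)$ lies in $\genset{E(R)}\le U$, and $w\,\bigl(\{i_k\}\times g_k^{-1}Vg_l\times\{\lambda_l\}\bigr)=\{i\}\times g_k^{-1}Vg_l\times\{\lambda_l\}$ because Proposition~\ref{prop-missing}(iv) forces the accumulated product of matrix entries into $G_k\le g_k^{-1}Vg_k$, which only left-translates the coset of $V$ by an element of $V$; a dual path in $\Lambda_l$ then moves $\lambda_l$ to $\lambda$. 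Since $0\in\genset{E(R)}\subseteq U$ as well, this gives $T\subseteq U$, and therefore $U=T$.

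I expect the only real friction to be in this last step — keeping track of exactly which product of matrix entries accumulates along the chosen path and checking that it stays inside $G_k$, hence inside $g_k^{-1}Vg_k$, so that multiplication by $w$, and by its $\Lambda_l$-analogue, permutes $V$ and leaves the coset $g_k^{-1}Vg_l$ intact. This is essentially a rerun of the path computation already carried out in the proof of Lemma~\ref{lem-block}, so no new idea is required. (As in the earlier lemmas, $0\in\genset{E(R)}$ presupposes that $P$ has a zero entry; if it has none, then $R\setminus\{0\}$ is itself a subsemigroup, and the lemma is used only where this degenerate case does not arise.)
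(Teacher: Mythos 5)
Your proof is correct and is in substance the same as the paper's: both directions come down to showing that the generated subsemigroup coincides with $T=\{0\}\cup\bigcup_{k,l}I_k\times g_k^{-1}Vg_l\times\Lambda_l$, and your two containments are the paper's two implications in lightly repackaged form (verifying that $T$ is a subsemigroup containing the generators replaces the paper's induction on word length, and your explicit product computations replace its appeals to Green's Lemma). The only inefficiency is in the final step, where instead of rerunning the path argument from Lemma~\ref{lem-block} you can invoke~\eqref{eq-idem} directly to get $(i,1_G,\lambda_k)$ and $(i_l,1_G,\lambda)$ in $\genset{E(R)}$ and multiply by these, as the paper does; note also that $0$ is itself an idempotent of $R$, so $0\in E(R)$ unconditionally and your closing caveat about $P$ having a zero entry is unnecessary.
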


\begin{proof}
  ($\Rightarrow$)
  Let $x_{1} = y_{1} = (i_{1}, 1_{G}, \lambda_{1})$.
  Then since $x_{1}, y_{1} \in \{i_{1}\}\times V\times \{\lambda_{1}\}$, and
  since $\{i_{1}\}\times V\times\{\lambda_{1}\} = H_{i_{1},\lambda_{1}}$ by
  assumption, it suffices to show that the set $X = \left\{E(R),
  H_{i_{1},\lambda_{1}}, x_{1},\ldots,x_{n}, y_{1},\ldots,y_{n} \right\}$
  generates $S$. Clearly $\genset{X} \leq S$.
  
  Since $0 \in E(R) \leq \genset{X}$, to prove that $S \leq \genset{X}$, let
  $k,l \in \{1,\ldots,n\}$ be arbitrary and let $H_{i,\lambda}$ be an $\H$-class
  of $S$ in $C_{k,l}$ (i.e.~$i \in I_{k}$ and $\lambda\in\Lambda_{l}$). We must
  show that $H_{i,\lambda}\subseteq\genset{X}$.
  By Green's Lemma [Lemma~\ref{lem-green}],
  $H_{i_{k}, \lambda_{1}} = y_{k} H_{i_{1},\lambda_{1}} \subseteq \genset{X}$
  and
  $H_{i_{1}, \lambda_{l}} = H_{i_{1},\lambda_{1}} x_{l} \subseteq \genset{X}$.
  By~\eqref{eq-idem}, $(i, 1_{G}, \lambda_{k})$ and $(i_{l}, 1_{G}, \lambda)$
  are elements of $\genset{E(R)}$, and are therefore contained in $\genset{X}$.
  Hence by Green's Lemma, $H_{i,\lambda_{k}} = (i, 1_{G},
  \lambda_{k})H_{i_{k},\lambda_{k}} = (i, 1_{G}, \lambda_{k})y_{k}
  H_{i_{1},\lambda_{1}} x_{k} \subseteq \genset{X}$ and $H_{i_{l},\lambda} =
  H_{i_{l},\lambda_{l}}(i_{l}, 1_{G}, \lambda) = y_{l} H_{i_{1},\lambda_{1}}
  x_{l}(i_{l}, 1_{G}, \lambda) \subseteq \genset{X}$.
  By~\eqref{eq-H-classes},
  $H_{i, \lambda}
     = H_{i,\lambda_{k}}
       H_{i_{k},\lambda_{1}}
       H_{i_{1},\lambda_{l}}
       H_{i_{l},\lambda} \subseteq \genset{X}$,
  and it follows that $S \leq \genset{X}$.

  ($\Leftarrow$)
  That $S$ contains $0$, follows since $0\in E(R)$.

  To prove that $C_{k,l} \subseteq I_{k} \times g_{k}^{-1} V g_{l} \times
  \Lambda_{l}$ for all $k,l$, we must show that if $(i, g, \lambda) \in
  C_{k,l}$, for some $k,l$, then $g\in g_k ^ {-1} V g_l$. We proceed by
  induction on the length of a product in the generators equalling any given
  element.  Certainly, if $(i, g, \lambda)$ is one of the generators of $S$,
  then there exist $k, l$ such that $(i, g, \lambda) \in C_{k,l}$, and $g \in
  g_k^{-1} Vg_l$ by definition.

  Assume that if $(i, g, \lambda)\in S$  can be expressed as a product of
  length at most $m$ in the generators of $S$, then $g\in  g_k^{-1} V g_l$,
  where $k, l$ are such that $(i, g, \lambda)\in C_{k,l}$.  Let $(i, g,
  \lambda)\in S$ be any element that can be given as a product in $X$ of
  length $m + 1$. Then there exist $k, l$ such that $(i, g, \lambda)\in C_{k,
  l}$, and there exist $(i, h_1, \lambda'), (i', h_2, \lambda)\in S$ that can
  both be expressed as product of length at most $m$ over $X$ and $(i, h_1,
  \lambda')(i', h_2, \lambda) = (i, g, \lambda)$. Since this product is
  non-zero, it follows that $p_{\lambda', i'}\not =0$ and so $i'\in I_r$ and
  $\lambda'\in \Lambda_r$ for some $r$.  By induction, $h_1\in g_{k}^{-1}V
  g_{r}, h_2\in g_r^{-1}V g_l$, and so $$g = h_1p_{\lambda', i'} h_2\in
  g_{k}^{-1} V g_r p_{\lambda', i'} g_r^{-1} V g_l.$$ Since $p_{\lambda',i'}\in
  G_r\leq g_r^{-1}V g_r$ it follows that $g_r p_{\lambda', i'} g_r^{-1}\in V$.
  Therefore $g\in g_k^{-1} V g_l$, as required. 

  Let $k,l \in \{1,\ldots,n\}$.  To prove that $I_{k} \times g_{k}^{-1} V
  g_{l} \times \Lambda_{l} \subseteq C_{k,l}$ we will show that $\{i\}\times
  g_{k}^{-1}Vg_{l} \times\{\lambda\} \subseteq S$ for $i\in I_{k}$ and
  $\lambda\in \Lambda_{l}$.
  By~\eqref{eq-idem}, $(i,1_{G},\lambda_{k})$ and $(i_{l},1_{G},\lambda)$ are
  elements of $\genset{E(R)}$ and so
  \begin{equation*}
    \{i\} \times g_{k}^{-1}Vg_{l} \times \{\lambda\} = (i,1_{G},\lambda_{k})
    y_{k} \big( \{i_{1}\} \times V \times \{\lambda_{1} \}\big) x_{l}
    (i_{l},1_{G},\lambda)  \subseteq S.\qedhere
  \end{equation*}
\end{proof}

\begin{lem}\label{lem-genset-equiv}
  Let $V$ and $\overline{V}$ be subgroups of $G$ such that there exist $g_{1} =
  1_{G}, g_{2}, \ldots, g_{n}, \overline{g_{1}} = 1_{G},
  \overline{g_{2}},\ldots,\overline{g_{n}} \in G$ where $G_{k} \leq
  g_{k}^{-1}Vg_{k}$ and $G_{k} \leq
  \overline{g_{k}}^{-1}\overline{V}\overline{g_{k}}$ for all $k$.
  Let
  $X=\{E(R),\{i_{1}\}\times V \times\{\lambda_{1}\}, x_{2},\ldots,x_{n},
  y_{2},\ldots,y_{n}\}$
  and
  $\overline{X}=\{E(R),\{i_{1}\}\times \overline{V} \times\{\lambda_{1}\},
  \overline{x_{2}},\ldots,\overline{x_{n}},
  \overline{y_{2}},\ldots,\overline{y_{n}}\}$,
  where
  $x_{k}=(i_{1},g_{k},\lambda_{k})$,
  $\overline{x_{k}}=(i_{1},\overline{g_{k}},\lambda_{k})$,
  $y_{k}=(i_{k},g_{k}^{-1},\lambda_{1})$,
  and
  $\overline{y_{k}}=(i_{k},\overline{g_{k}}^{-1},\lambda_{1})$
  for all $k$.
  Then $\genset{X}=\genset{\overline{X}}$ if and only if
  $V = \overline{V}$ and
  $Vg_{k} = V\overline{g_{k}}$ for all $k$.
\end{lem}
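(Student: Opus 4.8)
The plan is to derive the statement from Lemma~\ref{lem-genset}. First I would set $S = \genset{X}$ and $\overline{S} = \genset{\overline{X}}$; both are subsemigroups of $R$, and the hypotheses of Lemma~\ref{lem-genset} are satisfied by $V, g_{1}, \ldots, g_{n}$ and by $\overline{V}, \overline{g_{1}}, \ldots, \overline{g_{n}}$ respectively (in particular $G_1 \leq V$ and $G_1 \leq \overline{V}$ are among the given hypotheses, as is $g_1 = \overline{g_1} = 1_G$). Since $S$ is by definition the subsemigroup generated by $E(R)$, $\{i_{1}\}\times V\times\{\lambda_{1}\}$, and $x_{2}, \ldots, x_{n}, y_{2}, \ldots, y_{n}$, Lemma~\ref{lem-genset} gives $S \cap (I_{k} \times G \times \Lambda_{l}) = I_{k} \times g_{k}^{-1} V g_{l} \times \Lambda_{l}$ for all $k, l \in \{1, \ldots, n\}$, and likewise $\overline{S} \cap (I_{k} \times G \times \Lambda_{l}) = I_{k} \times \overline{g_{k}}^{-1} \overline{V} \overline{g_{l}} \times \Lambda_{l}$.

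Next I would use that $R$ is the disjoint union of $\{0\}$ with the sets $I_{k} \times G \times \Lambda_{l}$ ($1 \leq k, l \leq n$), since the $I_k$ partition $I$ and the $\Lambda_l$ partition $\Lambda$ by Proposition~\ref{prop-missing}, and that $0 \in E(R)$ lies in both $S$ and $\overline{S}$. Hence $S = \overline{S}$ if and only if $S \cap (I_{k} \times G \times \Lambda_{l}) = \overline{S} \cap (I_{k} \times G \times \Lambda_{l})$ for all $k, l$; as $I_{k}$ and $\Lambda_{l}$ are non-empty, by the previous paragraph this is equivalent to the family of coset equalities $g_{k}^{-1} V g_{l} = \overline{g_{k}}^{-1} \overline{V} \overline{g_{l}}$ for all $k, l \in \{1, \ldots, n\}$. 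So the lemma reduces to showing that this family of equalities holds if and only if $V = \overline{V}$ and $V g_{k} = V \overline{g_{k}}$ for all $k$.

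This last equivalence is elementary. For the forward direction, specializing to $k = l = 1$ and using $g_{1} = \overline{g_{1}} = 1_{G}$ gives $V = \overline{V}$, and then specializing to $k = 1$ with $l$ arbitrary gives $V g_{l} = \overline{V} \overline{g_{l}} = V \overline{g_{l}}$. For the converse, from $V g_{k} = V \overline{g_{k}}$ and the fact that $V$ is a subgroup one gets $g_{k}^{-1} V = (V g_{k})^{-1} = (V \overline{g_{k}})^{-1} = \overline{g_{k}}^{-1} V$, and combining this with $V = \overline{V}$ and $V g_{l} = V \overline{g_{l}}$ yields $g_{k}^{-1} V g_{l} = \overline{g_{k}}^{-1} V g_{l} = \overline{g_{k}}^{-1} V \overline{g_{l}} = \overline{g_{k}}^{-1} \overline{V} \overline{g_{l}}$ for all $k, l$. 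The only place that needs genuine care is the very first step --- correctly invoking Lemma~\ref{lem-genset} for $\genset{X}$ and $\genset{\overline{X}}$ and reading off the block decompositions of these subsemigroups; after that the argument is a partition bookkeeping step together with one-line coset manipulations, so I do not anticipate a real obstacle.
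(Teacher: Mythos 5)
Your proposal is correct and follows essentially the same route as the paper: both invoke the converse implication of Lemma~\ref{lem-genset} to read off the block decompositions of $\genset{X}$ and $\genset{\overline{X}}$, reduce the equality of the two subsemigroups to the coset equalities $g_{k}^{-1}Vg_{l} = \overline{g_{k}}^{-1}\overline{V}\overline{g_{l}}$, and finish with the same elementary coset manipulations (specialising $k=l=1$ and $k=1$). Your write-up merely makes the partition-of-$R$ bookkeeping step more explicit than the paper's chain of equivalences.
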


\begin{proof}
  By Lemma~\ref{lem-genset},
  $\genset{X} = \genset{\overline{X}}$

  if and only if $\big( I_{k} \times g_{k}^{-1} V g_{l} \times \Lambda_{l} \big)
  = \big( I_{k} \times \overline{g_{k}}^{-1} \overline{V} \overline{g_{l}}
  \times \Lambda_{l} \big)$ for all $k, l \in \n$

  if and only if $g_{k}^{-1}Vg_{l} = {(\overline{g_{k}})}^{-1} \overline{V}
  \overline{g_{l}}$ for all $k,l \in \{1,\ldots,n\}$

  if and only if $V=\overline{V}$ and $g_{k}^{-1} V g_{l} =
  {(\overline{g_{k}})}^{-1} V \overline{g_{l}}$ for all $k, l \in \{ 1, \ldots,
  n \}$

  if and only if $V = \overline{V}$, $g_{k}^{-1} V g_{l} =
  {(\overline{g_{k}})}^{-1} V \overline{g_{l}}$, $g_{k}^{-1}V =
  {(\overline{g_{k}})}^{-1} V$, and $V g_{k} = V \overline{g_{k}}$ for all $k,l
  \in \{2,\ldots,n\}$

  if and only if $V=\overline{V}$ and $Vg_{k} = V\overline{g_{k}}$ for
  all $k \in \{2,\ldots,n\}$.
\end{proof}

Suppose that $V$ is a subgroup of $G$ such that $G_1\leq V$. By
Lemmas~\ref{lem-block},~\ref{lem-genset}, and~\ref{lem-genset-equiv}, in order
to find the subsemigroups $S$ of $R$ that intersect every $\H$-class of $R$
non-trivially, and such that $S \cap \big(\{i_1\}\times G\times
\{\lambda_1\}\big) = \{i_1\}\times V\times \{\lambda_1\}$, it suffices to find
an arbitrary set $\mathfrak{T}$ of representatives (called a
\textit{transversal}) of the right cosets of $V$ in $G$, and the sets
$$\set{g\in \mathfrak{T}}{G_k\leq g^{-1}Vg}$$
for all $k\geq 2$. More explicitly, given any transversal
$\mathfrak{T}$ of the right cosets of $V$ in $G$, by
Lemma~\ref{lem-genset-equiv} such subsemigroups of $R$ are in 1-1
correspondence with the Cartesian product
\begin{equation}\label{eq-coset-reps}
  \prod_{k = 2}^{n}\set{g\in \mathfrak{T}}{G_{k} \leq g^{-1}Vg}.
\end{equation}

Thus, if $t \in G$ is such that $G_{1} \leq t^{-1} V t$ and $\mathfrak{U}$ is an
arbitrary transversal of the right cosets of $t^{-1} V t$ in $G$, then the
subsemigroups $T$ of $R$ that intersect every $\H$-class of $R$ non-trivially,
and such that $T \cap \big(\{i_1\}\times G\times \{\lambda_1\}\big) =
\{i_1\}\times t^{-1}Vt\times \{\lambda_1\}$, are in 1-1 correspondence with the
Cartesian product
$$\prod_{k = 2}^{n} \bigset{g\in \mathfrak{U}}{G_{k} \leq g^{-1}
\left( t^{-1} V t \right)g}.$$
Note that we call any such subsemigroup $T$ a \emph{subsemigroup arising
from a conjugate of $V$}.  However, if $\mathfrak{T}$ is a transversal of the
right cosets of $V$ in $G$, then the set $\bigset{t^{-1} g}{g \in \mathfrak{T}}$
is a transversal of the right cosets of $t^{-1}Vt$ in $G$.  In particular, given
any transversal $\mathfrak{T}$ of the right cosets of $V$ in $G$, the collection
of all such subsemigroups $T$ is in 1-1 correspondence with the Cartesian
product $$\prod_{k = 2}^{n} \bigset{t^{-1}g}{g \in \mathfrak{T}\ \text{and}\
G_{k} \leq (t^{-1}g)^{-1} \left(t^{-1} V t \right)(t^{-1}g)} = \prod_{k = 2}^{n}
\bigset{t^{-1}g}{g \in \mathfrak{T}\ \text{and}\ G_{k} \leq g^{-1}Vg}.$$
Therefore, to find all subsemigroups arising from conjugates of $V$, 
it suffices to find the conjugates of $V$, as well as the set
in~\eqref{eq-coset-reps}.

We require the following lemma for the proof of
Proposition~\ref{prop-genset-rms-6}.

\begin{lem}\label{lem-super}
  If $S$ intersects every $\H$-class of $R$ non-trivially, then every
  non-zero $\H$-class of $S$ contains the same number of elements. In
  particular, if $S$ intersects every $\H$-class of $R$ non-trivially
  and contains a non-zero $\H$-class of $R$, then $S=R$.
\end{lem}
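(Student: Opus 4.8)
The plan is to reduce everything to the structural description supplied by Lemma~\ref{lem-block}. Suppose $S$ intersects every $\H$-class of $R$ non-trivially. By Lemma~\ref{lem-block}, $0 \in S$ and there is a subgroup $V$ of $G$ and elements $g_1 = 1_G, g_2, \ldots, g_n \in G$ such that $G_k \le g_k^{-1} V g_k$ and $C_{k,l} = I_k \times g_k^{-1} V g_l \times \Lambda_l$ for all $k, l \in \{1, \ldots, n\}$. As noted in the remark preceding Lemma~\ref{lem-block}, in this situation $S$ is regular and each non-empty $H_{i,\lambda}$ is precisely an $\H$-class of $S$; since $S$ intersects every $\H$-class of $R$, every $H_{i,\lambda}$ is non-empty, so the non-zero $\H$-classes of $S$ are exactly the sets $H_{i,\lambda}$ with $i \in I$, $\lambda \in \Lambda$.

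For the first assertion, fix $i \in I$ and $\lambda \in \Lambda$, and let $k, l$ be the indices with $i \in I_k$ and $\lambda \in \Lambda_l$. Then $H_{i,\lambda} = \{i\} \times g_k^{-1} V g_l \times \{\lambda\}$, so $|H_{i,\lambda}| = |g_k^{-1} V g_l|$. Since left translation by $g_k^{-1}$ followed by right translation by $g_l$ is a bijection of $G$ carrying $V$ onto $g_k^{-1} V g_l$, we get $|g_k^{-1} V g_l| = |V|$. Hence every non-zero $\H$-class of $S$ has exactly $|V|$ elements, proving the first claim.

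For the second assertion, suppose in addition that $S$ contains some non-zero $\H$-class of $R$, say $\{i\} \times G \times \{\lambda\}$. This $\H$-class has $|G|$ elements and, being a non-zero $\H$-class of $S$, must by the first part have $|V|$ elements; thus $|V| = |G|$ and so $V = G$. Consequently $g_k^{-1} V g_l = g_k^{-1} G g_l = G$ for all $k, l$, so $C_{k,l} = I_k \times G \times \Lambda_l$ for every $k, l$. Therefore
\[
  S = \{0\} \cup \bigcup_{1 \le k, l \le n} C_{k,l}
    = \{0\} \cup (I \times G \times \Lambda) = R,
\]
as required.

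The argument is essentially a bookkeeping consequence of Lemma~\ref{lem-block}; I do not anticipate a genuine obstacle. The only point requiring a moment's care is the identification of the non-zero $\H$-classes of $S$ with the sets $H_{i,\lambda}$, which relies on the regularity of $S$ and on $\H^S = \H^R$ for regular subsemigroups, both of which are recorded in the remarks preceding Lemma~\ref{lem-block}.
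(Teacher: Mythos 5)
Your proof is correct and takes essentially the same route as the paper: both apply Lemma~\ref{lem-block} to see that every non-zero $\H$-class of $S$ has the form $\{i\}\times g_k^{-1}Vg_l\times\{\lambda\}$ and hence has $|V|$ elements, from which both claims follow. The paper's proof is just a terser version of yours ("The result follows"); your extra detail on the second assertion is accurate.
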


\begin{proof}
  By Lemma~\ref{lem-block}, there exists a subgroup $V$ of $G$ such that every
  non-zero $\H$-class of $S$ has the form $\{i\}\times g_{k}^{-1}Vg_{l}
  \times\{\lambda\}$ for some indices $i,\lambda$ and for some elements
  $g_{k},g_{l}\in G$. The result follows.
\end{proof}

The following proposition will allow us to find all of the maximal
subsemigroups in $R$ of type~\eqref{item-subgroup} arising from conjugates of a
given maximal subgroup of the group $G$. 

\begin{prop}\label{prop-genset-rms-6}
  Let $R = \M^0[I,G,\Lambda;P]$ be a normalized regular finite Rees 0-matrix
  semigroup over a group $G$, where $I$ and $\Lambda$ are disjoint index sets,
  and $P = {(p_{\lambda, i})}_{\lambda \in \Lambda, i \in I}$ is a $|\Lambda|
  \times |I|$ matrix over $G \cup \{ 0 \}$, and let $S$ be a subsemigroup of
  $R$. Suppose that the number of connected components of the Graham-Houghton
  graph of $R$ is $n$, and let $G_1, \ldots, G_n$ and $i_1, \ldots, i_n\in I$
  and $\lambda_1, \ldots, \lambda_n\in \Lambda$ be as defined in
  Proposition~\ref{prop-missing}.
  Then $S$ is a maximal subsemigroup of $R$ that intersects every
  $\H$-class of $R$ non-trivially if and only if there exists a
  maximal subgroup $V$ of $G$  and elements $g_1 = 1_G,
  g_2,\ldots, g_n\in G$ such that $G_k \leq g_{k}^{-1} V g_{k}$ for all $k$,
  and 
  $$ S = \genset{E(R), \{i_1\}\times V \times \{\lambda_1\},  x_{2}, \ldots, x_{n},
  y_{2}, \ldots, y_{n}}$$ 
  where $x_{k} = (i_{1}, g_{k}, \lambda_{k})$ and $y_{k} = (i_{k}, g_{k}^{-1},
  \lambda_{1})$ for all $k$.
\end{prop}

\begin{proof}
  ($\Rightarrow$)
  After applying the direct implications of Lemmas~\ref{lem-block}
  and~\ref{lem-genset}, it remains to prove that $V$ is a maximal subgroup of
  $G$.
  Let $K$ be a subgroup of $G$ with $V \leq K \leq G$.  Then $T=\genset{E(R),
  \{i_{1}\}\times K \times\{\lambda_{1}\}, x_{2},\ldots,x_{n},
  y_{2},\ldots,y_{n}}$ is a subsemigroup of $R$, and $S\leq T$.
  Since $S$ is maximal, either $T=S$ or $T=R$.  In the former case, $K=V$ by
  Lemma~\ref{lem-genset-equiv}; in the latter case, by the converse implication
  of Lemma~\ref{lem-genset}, it follows that $\{i_{1}\}\times K
  \times\{\lambda_{1}\} = \{i_{1}\}\times G \times\{\lambda_{1}\}$, i.e. $K=G$.
  
  ($\Leftarrow$) 
  By the converse implication of Lemma~\ref{lem-genset}, it follows that $0
  \in S$ and $C_{k,l}=I_{k}\times g_{k}^{-1}Vg_{l} \times\Lambda_{l}$ for all
  $k,l$; in particular, $S$ is a proper subsemigroup of $R$ that intersects
  every $\H$-class of $R$ non-trivially.
  
  It remains to show that $S$ is maximal.  Let $x = (i, g, \lambda)\in
  R \setminus S$ be arbitrary. Then $\genset{S,x}$ is a subsemigroup of $R$
  that intersects every $\H$-class of $R$ non-trivially.  Since
  $\genset{S,x} \cap \left(\{i\}\times G \times\{\lambda\}\right)$ contains at
  least $|V|+1$ elements, so does the group $\genset{S,x} \cap
  \left(\{i_{1}\}\times G \times\{\lambda_{1}\}\right)$ by
  Lemma~\ref{lem-super}.  Therefore
  $$V = \big(S \cap \left(\{i_{1}\}\times G
  \times\{\lambda_{1}\}\right)\big)\psi \lneq \big(\genset{S,x} \cap
  \left(\{i_{1}\}\times G \times\{\lambda_{1}\}\right)\big)\psi \leq G.$$

  Since $V$ is a maximal subgroup of $G$, $\big(\genset{S,x} \cap \left(
  \{i_{1}\} \times G \times \{\lambda_{1}\}\right)\big)\psi = G$.
  The result follows by Lemma~\ref{lem-super}.
\end{proof}

Let $V$ be a representative of a conjugacy class of maximal subgroups of $G$.
By Lemma~\ref{lem-genset-equiv}, Proposition~\ref{prop-genset-rms-6}, and the
previous arguments, to find all of the maximal subsemigroups in $R$ of
type~\eqref{item-subgroup} that arise from conjugates of $V$, we first require
any transversal $\mathfrak{T}$ of the right cosets of $V$ in $G$, and any
transversal $\mathfrak{U}$ of the right cosets of $N_{G}(V)$ in $G$, where
$N_{G}(V)$ is the \emph{normalizer} of $V$ in $G$. Note that since $N_{G}(V)$ is
subgroup of $G$ that contains the maximal subgroup $V$, it follows that either
$N_{G}(V) = V$, in which case we may choose $\mathfrak{U} = \mathfrak{T}$, or
$N_{G}(V) = G$, in which case we may choose $\mathfrak{U} = \{1_{G}\}$. We then
require all coset representatives $g \in \mathfrak{T}$ such that $G_{k} \leq
g^{-1} V g$, for all $k \in \{2, \ldots, n\}$, and all coset representatives $t
\in \mathfrak{U}$ such that $G_{1} \leq t^{-1} V t$.
By performing this process for each conjugacy class of maximal subgroups of $G$,
we find all maximal subsemigroups of $R$ of type~\eqref{item-subgroup}.

\begin{cor}\label{cor-number-type-6}
  Let $V$ be a maximal subgroup of $G$, let $\mathfrak{T}$ be an arbitrary
  transversal of the right cosets of $V$ in $G$ and let $\mathfrak{U}$ be an
  arbitrary transversal of the right cosets of $N_{G}(V)$ in $G$.  Then the
  number of maximal subsemigroups in $R$ of type~\eqref{item-subgroup} that
  arise from conjugates of $V$ is
  $$M = |\set{t \in \mathfrak{U}}{G_{1} \leq t ^{-1} V t}|
  \cdot
  \prod_{k = 2}^{n}|\set{g \in \mathfrak{T}}{G_{k} \leq g^{-1}Vg}|.$$
  If $N_{G}(V) = G$, then $M \leq {[G : V]} ^ {n - 1}$. Otherwise, $N_{G}(V) =
  V$, and so $M \leq {[G : V]} ^ n$.
\end{cor}

The upper bounds in Corollary~\ref{cor-number-type-6} are tight. Let $B(G, m)$
denote the Brandt semigroup $\mathscr{M}^0[I, G, I; P]$ where $P$ is the
identity matrix and $|I| = m$. For example, suppose that $G = S_{3}$, the
symmetric group of degree $3$. The number of maximal subsemigroups of $B(G, m)$
of type~\eqref{item-subgroup} arising from conjugates of $V = A_{3}$, the
alternating group of degree $3$, is ${[G : V]} ^ {m - 1} = 2 ^ {m - 1}$, whilst
the number arising from $V = \genset{(1\ 2)}$, a cyclic group of order $2$, is
${[G : V]} ^ {m} = 3 ^ m$.

On the other hand, suppose that $G = \{1_{G}, x\} = \genset{x}$ is a cyclic
group of order $2$, that $|I| = |\Lambda| = 2$, and that
\begin{equation*}
  P = 
  \begin{pmatrix}
    1_G & 1_G\\
    1_G & x
  \end{pmatrix}.
\end{equation*} 
Then in $R = \mathscr{M}^0[I, G, \Lambda; P]$ the subgroup $G_{1}$ is equal to
$G$, and so there are no maximal subsemigroups of type~\eqref{item-subgroup},
since no maximal subgroup of $G$ contains $G_{1}$. 

A method for finding the maximal subsemigroups of
type~\eqref{item-subgroup} is given in Algorithm~\ref{algorithm-type-6}.

\begin{algorithm}
  \caption{Maximal subsemigroups of
  type~\eqref{item-subgroup}}\label{algorithm-type-6}
  \begin{algorithmic}[1]

    \item[\textbf{Input:}] $R$, a finite
      regular Rees 0-matrix semigroup over a group. 

    \item[\textbf{Output:}] the maximal subsemigroups $\mathfrak{M}$ of $R$ of
      type~\eqref{item-subgroup}.

    \item 
      find an isomorphism $\Psi:R \to \mathscr{M}^0[I, G, \Lambda; P] = R'$ such
      that $R'$ is normalized
      \Comment{Section 4 of~\cite{Graham1968ab}}

    \item
      find partitions $I_{1},\ldots,I_{n}$ and $\Lambda_{1},\ldots,\Lambda_{n}$
      of $I$ and $\Lambda$ such that $I_{k}\cup\Lambda_{k}$ is a connected
      component of the Graham-Houghton graph of $R'$ for all $k$
      \Comment{Standard breadth or depth first
      search~\cite[Section 4.1]{Sedgewick2011aa}}
    
    \item
      fix $i_1\in I_1$, \ldots, $i_n \in I_n$,  and $\lambda_1 \in \Lambda_1$,
      \ldots, $\lambda_n \in \Lambda_n$ such that $p_{\lambda_k, i_k} = 1_G$
      for all $k$ 

    \item $E(R') := \set{(i, p_{\lambda, i} ^ {-1},
      \lambda)}{i\in I,\ \lambda\in\Lambda,\ p_{\lambda i} \not= 0} \cup \{0\}$
      \Comment{The idempotents of $R'$}

    \item set $G_k := \genset{\set{p_{\lambda, i}}{i \in I_k,\ \lambda\in
      \Lambda_k, p_{\lambda, i} \not= 0}}$ for all $k$
      \Comment{Proposition~\ref{prop-missing}(iv)}
    
    \item set $\mathcal{C}$ to be a set of conjugacy class representatives of
      the maximal subgroups of $G$ 
      \Comment{Standard group theoretic
      algorithm~\cite{Cannon2004aa,Eick2001aa}}

    \item $\mathfrak{M} := \varnothing$
      
    \For{$V\in \mathcal{C}$}

    \State{compute $\mathfrak{U}$, a transversal of the right cosets of
    $N_{G}(V)$ in $G$}

      \State{$T_1 := \varnothing$}

      \For{$t\in \mathfrak{U}$}

        \If{$G_1\leq t^{-1} V t$}

        \State{$T_1\gets T_1 \cup \{t\}$}
 
        \EndIf{}

      \EndFor{}

      \State{compute $\mathfrak{T}$, a transversal of the right cosets of $V$
      in $G$}
      
      \For{$k\in \{2, \ldots, n\}$}

        \State{$T_k := \varnothing$}

        \For{$g\in \mathfrak{T}$}

          \If{$G_k\leq g^{-1} V g$}

            \State{$T_k\gets T_k \cup \{g\}$}
   
          \EndIf{}

        \EndFor{}
      \EndFor{}

      \For{$t_1\in T_1$, $t_2\in T_2$, \ldots, $t_n\in T_n$}

      \State$\mathfrak{M}\gets \mathfrak{M} \cup \big(\{\{E(R'),\ 
          \{i_1\}\times t_1^{-1}Vt_1 \times \{\lambda_1\},\ 
          (i_1, t_1^{-1}t_2, \lambda_2),\ 
          \ldots,$

          \qquad\qquad\qquad$(i_1, t_1^{-1}t_n, \lambda_n),\ 
          (i_2, t_2^{-1}t_1, \lambda_1),\ 
          \ldots,\ 
          (i_n, t_n^{-1}t_1, \lambda_1)\}\}\big)\Psi^{-1}$
        
      \EndFor{}

    \EndFor{}

    \State{\Return{$\mathfrak{M}$}}
  \end{algorithmic}
\end{algorithm}

%%%%%%%%%%%%%%%%%%%%%%%%%%%%%%%%%%%%%%%%%%%%%%%%%%%%%%%%%%%%%%%%%%%%%%%%%%%%%%%%

\subsection{An example}

In this section, we give an example of a Rees $0$-matrix semigroup and show
how our algorithm can be applied to calculate its maximal subsemigroups.

Let $R$ be the regular Rees $0$-matrix semigroup $\mathscr{M} ^ {0} [I, S_{4},
\Lambda; P]$, where $I = \{1, \ldots, 6\}$, $\Lambda = \{-6, \ldots, -1\}$,
$S_{4}$ is the symmetric group of degree $4$, and $P$ is the $6 \times
6$~matrix 
\begin{equation*}
P = 
\begin{pmatrix}
  (3\ 4) & (1\ 3\ 2\ 4) & (1\ 4)(2\ 3)  & 0         & 0            & 0       \\
  (2\ 4) & 0            & (1\ 3\ 2)     & 0         & 0            & 0       \\
  0      & (3\ 4)       & 0             & 0         & 0            & 0       \\
  0      & 0            & 0             & (1\ 4\ 3) & (1\ 3)(2\ 4) & 0       \\
  0      & 0            & 0             & (1\ 4)    & (1\ 4\ 2)    & 0       \\
  0      & 0            & 0             & 0         & 0            & (1\ 4\ 2)
\end{pmatrix}.
\end{equation*}
A diagram of the Graham-Houghton graph of $R$ is shown in
Figure~\ref{fig-graham-houghton}. 

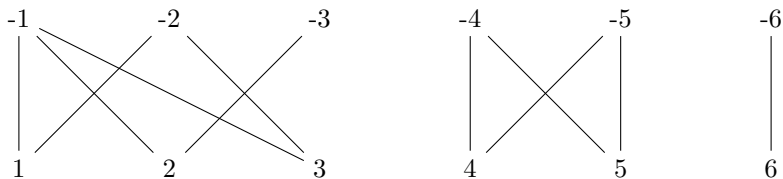
\begin{figure}[!hb]
\begin{center}
\begin{tikzpicture}
  \node (1) at (0,  2) {-1};
  \node (2) at (2,  2) {-2};
  \node (3) at (4,  2) {-3};
  \node (4) at (6,  2) {-4};
  \node (5) at (8,  2) {-5};
  \node (6) at (10, 2) {-6};

  \node (a) at (0,  0) {1};
  \node (b) at (2,  0) {2};
  \node (c) at (4,  0) {3};
  \node (d) at (6,  0) {4};
  \node (e) at (8,  0) {5};
  \node (f) at (10, 0) {6};

  \edge{1}{a};
  \edge{1}{b};
  \edge{1}{c};
  \edge{2}{a};
  \edge{2}{c};
  \edge{3}{b};

  \edge{4}{d};
  \edge{4}{e};
  \edge{5}{d};
  \edge{5}{e};
  
  \edge{6}{f};

\end{tikzpicture}
\end{center}
\caption{The Graham-Houghton graph $\Gamma(R)$ of $R$.}
\label{fig-graham-houghton}
\end{figure}

Since $|R| = 6 ^ 2 \cdot 4! + 1 = 865$, $R$ has no maximal subsemigroup
of type~\eqref{item-size-2}. Since $P$ contains $0$, $R\setminus \{0\}$ is not
a subsemigroup and so there are no maximal subsemigroups of type
\eqref{item-remove-zero} either.

The removal of any of the vertices $2$, $6$, or $-6$ from the Graham-Houghton
graph $\Gamma(R)$, results in an induced subgraph containing a vertex without any
incident edges.  Hence the only maximal subsemigroups of $R$ of
types~\eqref{item-remove-lambda} and \eqref{item-remove-i} arise from removing
any of the vertices $-1, -2, -3, -4$, or $-5$ or $1, 3, 4$, or $5$,
respectively.  Thus there are $9$ subsemigroups of
types~\eqref{item-remove-lambda} and~\eqref{item-remove-i}.

The maximal cliques in the dual of this graph
corresponding to maximal subsemigroups are: 
$$
\{ -1, -2, -3, -4, -5, 6 \}, 
\{ -1, -2, -3, -6, 4, 5 \}, 
\{ -1, -2, -3, 4, 5, 6 \}, 
\{ -2, -4, -5, -6, 2 \},
$$
$$
\{ -2, -4, -5, 2, 6 \},
\{ -2, -6, 2, 4, 5 \},
\{ -2, 2, 4, 5, 6 \},
\{ -3, -4, -5, -6, 1, 3 \},
\{ -3, -4, -5, 1, 3, 6 \},
$$
$$
\{ -3, -6, 1, 3, 4, 5 \},
\{ -3, 1, 3, 4, 5, 6 \},
\{ -4, -5, -6, 1, 2, 3 \},
\{ -4, -5, 1, 2, 3, 6 \},
\{ -6, 1, 2, 3, 4, 5 \}.
$$
The only maximal cliques in the dual of $\Gamma(R)$ that do not correspond to
maximal subsemigroups are $\{-6, \ldots, -1\}$ and $\{1, \ldots, 6\}$.  Hence
there are $14$ maximal subsemigroups of $R$ of type \eqref{item-rect}.

To calculate the maximal subsemigroups of $R$ of type~\eqref{item-subgroup},
it is first necessary to find a normalized Rees 0-matrix semigroup that is
isomorphic to $R$. One such normalized Rees 0-matrix semigroup is
$R' = \mathscr{M} ^ {0} [I, S_{4}, \Lambda; P']$,
where
\begin{equation*}
P' = \left(
\begin{array}{cccccc}
  \id   & \id       & \id           & 0       & 0            & 0       \\
  \id   & 0         & (1\ 2)(3\ 4)  & 0       & 0            & 0       \\
  0     & \id       & 0             & 0       & 0            & 0       \\
  0     & 0         & 0             & \id     & \id          & 0       \\
  0     & 0         & 0             & \id     & (1\ 2\ 3\ 4) & 0       \\
  0     & 0         & 0             & 0       & 0            & \id
\end{array}
\right)
\end{equation*}
and where $\id$ is the identity permutation on $\{ 1, 2, 3, 4 \}$.

It is clear by inspecting $\Gamma(R)$ that $R$, and hence $R'$, have $3$
connected components.

To find the maximal subsemigroups in $R$ of type \eqref{item-subgroup}, we
first find the groups $G_{1}, G_{2}, G_{3}$ from the definition of a normalized
Rees 0-matrix semigroup.  The groups $G_{1}, G_{2}$, and $G_{3}$ are generated
by the non-zero matrix entries corresponding to the relevant connected component
of the graph $\Gamma(R)$, and so: $$G_{1} = \genset{(1\ 2)(3\ 4)},\quad G_{2} =
\genset{(1\ 2\ 3\ 4)},\quad \text{and} \quad G_{3} = \mathbf{1},$$ where
$\mathbf{1}$ is the trivial subgroup of $S_{4}$.

We then determine the maximal subgroups $V$ of $G$ up to conjugacy, and for
each $V$, find the maximal subsemigroups of $R$ that arise from conjugates of
$V$. For each such $V$, it suffices to find a transversal $\mathfrak{U}$ of the
right cosets of $N_{G}(V)$ in $G$ and a transversal $\mathfrak{T}$ of the right
cosets of $V$ in $G$, along with the sets
$$\set{g \in \mathfrak{U}}{G_{1} \leq g^{-1}Vg}\quad\text{and}\quad\set{g\in
\mathfrak{T}}{G_{k} \leq g^{-1}Vg},\quad\text{for all}\ k\in \{2, 3\}.$$
If, for a given choice of $V$, any of these sets is empty, then there are no
maximal subsemigroups of $R$ arising from conjugates of $V$. 

Up to conjugacy, there are $3$ maximal subgroups of $S_{4}$: the alternating
group $A_{4}$ in its natural representation; the symmetric group $S_{3}$; and
the dihedral group of order $8$, $D_{4} = \genset{(1\ 2), (1\ 3)(2\ 4)}$. We
consider each of these cases separately.

\textbf{Case 1: $V = A_{4}$.} 
Since $G_{2}$ is not a subgroup of $g^{-1}A_{4}g = A_{4}$ for any $g\in G$,
there are no maximal subsemigroups of $R$ of type~\eqref{item-subgroup} arising
from conjugates of $A_4$.

\textbf{Case 2: $V = S_{3}$.} 
Since $G_{1}$ has no fixed points but every subgroup of every conjugate of
$S_{3}$ fixes at least one point, it follows that $G_{1}\not\leq g^{-1}S_{3}g$
for any $g\in G$.  Hence there are no maximal subsemigroups of $R$ arising from
conjugates of $S_{3}$.

\textbf{Case 3: $V = D_{4}$.} 
Since $D_{4}$ is not a normal subgroup of $S_{4}$, it
follows that $N_{S_{4}}(D_{4}) = D_{4}$. We choose the transversal:
$$\mathfrak{T} = \mathfrak{U} = \{ \id,\ (2\ 3),\ (2\ 4\ 3) \}.$$
All $3$ conjugates of $D_{4}$ contain $G_{1}$, the only conjugate of $D_{4}$
that contains $G_{2}$ is ${(2\ 3)} ^ {-1} D_{4} (2\ 3)$, and every subgroup of
$S_{4}$ contains $G_{3}$.  Hence there are $9$ maximal subsemigroups of $R'$
(and hence $R$) of type~\eqref{item-subgroup} that arise from conjugates of
$D_{4}$.

In total there are $32$ maximal subsemigroups of $R$.

%%%%%%%%%%%%%%%%%%%%%%%%%%%%%%%%%%%%%%%%%%%%%%%%%%%%%%%%%%%%%%%%%%%%%%%%%%%%%%%%

\section{Arbitrary semigroups}\label{section-arbitrary}

In this section, we consider the problem of computing the maximal subsemigroups
of an arbitrary finite semigroup, building on the results of
Section~\ref{section-rees}. 

By Lemma~\ref{lem-graham}, if $M$ is a maximal subsemigroup of a finite
semigroup $S$, then $S\setminus M\subseteq J$ for some $\J$-class $J$
of $S$. By Proposition~\ref{prop-graham}, either $J$ is non-regular and $M$ has
the following form:

\begin{enumerate}[label=(S\arabic*), ref=S\arabic*]
  \item\label{item-nonreg}
    $M = S \setminus J$
    (Proposition~\ref{prop-graham}\eqref{item-prop-nonreg}),
\end{enumerate}
or $J$ is regular and $M$ has precisely one of the following forms:
\begin{enumerate}[label=(S\arabic*), ref=S\arabic*, resume]
  \item\label{item-every}
    $M\cap J$ has non-empty intersection with every
    $\H$-class in $J$
    (Proposition~\ref{prop-graham}\eqref{item-prop-intersect});

  \item\label{item-both}
    $M \cap J$ is a non-empty union of both $\L$- and $\R$-classes of $S$
    (Proposition~\ref{prop-graham}(c)\eqref{item-prop-rect});

  \item\label{item-l}
   $M\cap J$ is a non-empty union of $\L$-classes of $S$
   (Proposition~\ref{prop-graham}(c)\eqref{item-prop-cols});

  \item\label{item-r}
   $M\cap J$ is a non-empty union of $\R$-classes of $S$
   (Proposition~\ref{prop-graham}(c)\eqref{item-prop-rows});
  
  \item\label{item-empty}
    $M\cap J = \varnothing$
    (Proposition~\ref{prop-graham}(c)\eqref{item-prop-empty}).
\end{enumerate}

Throughout this section, $S$ denotes an arbitrary finite semigroup with
generating set $X$, $J$ denotes a $\J$-class of $S$, $\phi: J^* \to
R = \mathscr{M}^{0}[I, G, \Lambda; P]$ is an isomorphism from the principal factor
$J^{*}$ of $J$, and $X'$ consists of those generators $x\in X$ such
that $J_{x} > J$, where $J_x$ is the $\J$-class of $x$.

We will use the following straightforward lemma repeatedly in this section, for
which we include a proof for completeness.

\begin{lem}\label{lem-subsemigroup}
  Let $T$ be a subset of $S$ such that $S \setminus T \subseteq J$.  Then $T$
  is a subsemigroup of $S$ if and only if
  \begin{enumerate}[label=\emph{(\roman*)},ref=\roman*]

    \item\label{item-assumption-1}
      $\genset{X ^ {\prime}} \subseteq T$;

    \item\label{item-assumption-2}
      if $x, y \in T \cap J$, then $xy \in J$ implies that $xy \in T$; and

    \item\label{item-assumption-3}
      if $x \in T \cap J$ and $y \in \genset{X'}$, then $xy \in J$ implies that
      $xy \in T$, and $yx \in J$ implies that $yx \in T$.

  \end{enumerate}
\end{lem}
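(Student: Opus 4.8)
The plan is to prove both directions of the biconditional, with the forward direction (if $T$ is a subsemigroup, then (i)--(iii) hold) being almost immediate and the converse (if (i)--(iii) hold, then $T$ is closed under multiplication) requiring a short case analysis.

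For the forward direction, suppose $T$ is a subsemigroup of $S$. Then $T$ is closed under products, so (ii) and (iii) are immediate: any product of two elements of $T$ lies in $T$, and in particular if such a product happens to lie in $J$ then it lies in $T\cap J$. For (i), I would argue that $\langle X'\rangle$ avoids $J$ entirely and hence is contained in $S\setminus(S\setminus T)=T$: each $x\in X'$ satisfies $J_x>J$, and I claim every product of elements from $X'$ has $\mathscr{J}$-class strictly above $J$. Indeed, if $w$ is such a product and $w=uv$ with $u\in\langle X'\rangle$, $v\in X'$, then $w\in S^1 v$, so $J_w\geq J_v>J$; thus $J_w\neq J$, so $w\notin J$, whence $w\in T$. (More carefully, one shows $J_w\geq J_x$ for any generator $x$ appearing in $w$, using that $\mathscr{J}$-class can only go up under multiplication on either side in a finite semigroup; in particular $J_w>J$.) So $\langle X'\rangle\subseteq S\setminus J\subseteq T$.

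For the converse, suppose (i)--(iii) hold; I must show $T$ is closed under multiplication, i.e.\ $ab\in T$ for all $a,b\in T$. Since $S\setminus T\subseteq J$, it suffices to show: whenever $a,b\in T$ and $ab\in J$, we have $ab\in T$. I would split into cases according to where $a$ and $b$ sit relative to $J$. Write $T=\langle X'\rangle\sqcup (T\cap J)\sqcup (T\setminus(J\cup\langle X'\rangle))$; but note every element of $T\setminus J$ that is not in $\langle X'\rangle$ lies in some $\mathscr{J}$-class, and the relevant dichotomy is really ``$a\in J$ or $a\notin J$''.

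\begin{itemize}
\item If $a,b\in T\cap J$: then $ab\in J$ implies $ab\in T$ by (ii).
\item If $a\in T\cap J$ and $b\notin J$: I need $b\in\langle X'\rangle$ to apply (iii). Here is the key point. Since $ab\in J$ and $a\in J$, stability (Lemma~\ref{lem-d-finite}) gives $a\mathrel{\mathscr{R}}ab$, so $J_b\geq J_{ab}=J$ (as $ab\in S^1 b$ forces $J_{ab}\leq J_b$); combined with $b\notin J$ this gives $J_b>J$. Now $b$ is a product of generators from $X$; write $b=c d$ with $d$ a single generator and $c\in S^1$. Then $J_d\geq J_b>J$, so $d\in X'$. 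Iterating, every generator occurring in $b$ lies in $X'$, so $b\in\langle X'\rangle$. Then (iii) gives $ab\in T$.
\item If $a\notin J$ and $b\in T\cap J$: dual to the previous case, using the $\mathscr{L}$-version of stability, gives $a\in\langle X'\rangle$ and then (iii) gives $ab\in T$.
\item If $a\notin J$ and $b\notin J$: here $ab\in J$ is possible only if... actually I should argue both $a,b\in\langle X'\rangle$. Since $ab\in J$, either $J_a=J$ (excluded) or $J_a>J$; if $J_a>J$ then as above $a\in\langle X'\rangle$, and similarly $b\in\langle X'\rangle$ once we know $J_b>J$. To see $J_b>J$: $ab\in S^1 b$ gives $J\leq J_b$, and $b\notin J$ gives $J_b>J$; symmetrically for $a$. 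So $a,b\in\langle X'\rangle$, hence $ab\in\langle X'\rangle\subseteq T$ by (i).
\end{itemize}

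The main obstacle, and the place requiring the most care, is the argument that an element of $T\setminus J$ which multiplies (on the appropriate side) with an element of $J$ to land back in $J$ must in fact lie in $\langle X'\rangle$ — equivalently, that such an element has $\mathscr{J}$-class strictly above $J$, and that any element whose $\mathscr{J}$-class is strictly above $J$ and which lies in $T$ is a product of generators all from $X'$. This uses stability (Lemma~\ref{lem-d-finite}) to upgrade $a\mathrel{\mathscr{J}}ab$ to $a\mathrel{\mathscr{R}}ab$ and thereby pin down $J_b$, together with the elementary fact that in a finite semigroup the $\mathscr{J}$-order is monotone under one-sided multiplication, so that a generator appearing as a factor of $w$ has $J_w\geq J_x$. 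Once this is in hand, the remaining verifications are routine. One subtlety to handle cleanly: an element of $T$ need not be a product of elements of $X$ alone (it could be a single generator, or involve generators outside $X'$), so the ``decompose into generators'' step should be phrased as: if $w\in S$ and $J_w>J$, then writing $w$ as a word in $X$ (possible since $X$ generates $S$), each letter $x$ satisfies $J_x\geq J_w>J$, so $x\in X'$, whence $w\in\langle X'\rangle$; this does not require $w\in T$ a priori, though in our applications $w\in T$.
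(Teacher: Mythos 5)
Your converse direction is correct and is essentially the paper's argument: the paper simply asserts that $x, y \in T$ with $xy \in J$ forces $x, y \in J \cup \genset{X'}$ and then invokes (i)--(iii) case by case, and your case analysis supplies exactly the justification the paper leaves implicit (namely that $w \notin J$ together with $J \leq J_w$ forces $J_w > J$, and that any $w$ with $J_w > J$ is a word in generators all lying in $X'$, because each letter $x$ of such a word satisfies $J_x \geq J_w$). The appeal to stability in your second case is superfluous --- $ab \in S^1 b$ already gives $J \leq J_b$ --- but harmless.

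The forward direction, however, contains a genuine error in the proof of (i). You claim that $\genset{X'}$ is disjoint from $J$, justified by ``$w \in S^1 v$, so $J_w \geq J_v$'' and by the assertion that the $\J$-class ``can only go up under multiplication''. Both are backwards: $w \in S^1 v$ means $S^1 w S^1 \subseteq S^1 v S^1$, i.e.\ $J_w \leq J_v$, so the $\J$-order is monotone \emph{decreasing} under one-sided multiplication. (You use the correct direction of this inequality throughout your converse argument, so this is an isolated slip.) The conclusion $\genset{X'} \cap J = \varnothing$ is false in general --- a product of generators whose $\J$-classes lie strictly above $J$ can perfectly well fall into $J$; Example~\ref{ex-sec-3} of this paper exhibits a $\J$-class $J$ with $|\genset{X'} \cap J| = 4$, and Proposition~\ref{prop-non-regular-arbitrary} would be vacuous if your claim held. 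Statement (i) is nonetheless true, but for the reason the paper gives: each $x \in X'$ satisfies $J_x > J$, hence $x \notin J$, hence $x \in S \setminus J \subseteq T$; since $T$ is a subsemigroup, $\genset{X'} \subseteq T$. With that one repair the proof is complete.
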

\begin{proof}
  ($\Rightarrow$)
  The second condition holds since $T$ is a subsemigroup,
  and since $T$ additionally contains $X^{\prime}$, it follows that the first
  and third conditions hold.

  ($\Leftarrow$)
  Suppose that $x, y \in T$ and that $xy \in J$.  Then $x, y \in J \cup
  \genset{X ^ {\prime}}$.
  If $x, y \in J$ then $xy \in T$ by~\eqref{item-assumption-2}.
  If $x, y \in \genset{X ^ {\prime}}$, then $xy \in \genset{X ^ {\prime}}
  \subseteq T$. For the remaining cases, $xy \in T$
  by~\eqref{item-assumption-3}.
\end{proof}

%%%%%%%%%%%%%%%%%%%%%%%%%%%%%%%%%%%%%%%%%%%%%%%%%%%%%%%%%%%%%%%%%%%%%%%%%%%%%%%%

\subsection{Maximal subsemigroups arising from non-regular $\J$-classes:
type~\eqref{item-nonreg}}\label{sec-nonreg}

In this section, we characterise those maximal subsemigroups of a finite
semigroup $S$ arising from the exclusion of a non-regular $\J$-class.
Throughout this section we suppose that $J$ is a non-regular
$\J$-class of $S$.

\begin{lem}\label{lem-semigroup-remove-non-regular}
  Either $\genset{S \setminus J} = S$ or $S \setminus J$ is a maximal
  subsemigroup of $S$.
\end{lem}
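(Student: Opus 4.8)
The plan is to prove a sharper statement from which the lemma is immediate: \emph{if $T$ is a subsemigroup of $S$ with $S \setminus J \subseteq T$, then either $T = S \setminus J$ or $T = S$}. Granting this, note first that $S \setminus J$ is a proper subset of $S$ since $J \neq \varnothing$. If $S \setminus J$ happens to be a subsemigroup, the sharper statement says no proper subsemigroup strictly contains it, so it is maximal. If $S \setminus J$ is not a subsemigroup, then $\genset{S \setminus J}$ strictly contains $S \setminus J$, so by the sharper statement $\genset{S \setminus J} = S$. (In particular the two alternatives are mutually exclusive and exhaustive, and in the second alternative $S \setminus J$ is indeed a subsemigroup.)

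The core of the proof is the claim that any subsemigroup $T$ with $S \setminus J \subseteq T$ and $T \cap J \neq \varnothing$ must satisfy $J \subseteq T$ (and hence $T = S$). Fix $z \in T \cap J$ and let $w \in J$ be arbitrary. Since $z \J w$ we may write $w = azb$ for some $a, b \in S^1$. I will show that $a, b \in (S \setminus J) \cup \{1\} \subseteq T^1$; then $w = azb \in T^1 T T^1 \subseteq T$, which gives $J \subseteq T$, and together with $S \setminus J \subseteq T$ forces $T = S$.

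It remains to locate $a$ and $b$. From $w = (az)b$ we get $J_w \leq_\J J_{az}$, while $az \in S^1 z S^1$ gives $J_{az} \leq_\J J_z$; since $w, z \in J$ we have $J_w = J_z$, so by antisymmetry of the $\J$-order $J_{az} = J$, i.e.\ $az \in J$. Dually, $zb \in J$. Now if $a$ were in $J$, then $a$ and $z$ would be two elements of $J$ whose product $az$ also lies in $J$; this is impossible, since $J$ is non-regular and so its principal factor $J^*$ is a null semigroup, meaning every product of two elements of $J$ leaves $J$. Hence $a \notin J$, so $a \in (S \setminus J) \cup \{1\}$; the same argument applied to $zb$ shows $b \notin J$, so $b \in (S \setminus J) \cup \{1\}$. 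This completes the core claim and hence the lemma.

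The only place the hypothesis is used is the last step: non-regularity of $J$ is exactly what prevents $a$ (or $b$) from lying inside $J$, so I expect this to be the crux of the argument. Everything else is formal manipulation of the $\J$-order and of the containment $S \setminus J \subseteq T$; in particular no appeal to stability (Lemma~\ref{lem-d-finite}) or to Lemma~\ref{lem-subsemigroup} seems necessary.
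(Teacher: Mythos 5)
Your proof is correct, but it takes a genuinely different route from the paper's. The paper's proof is a short appeal to the Graham--Graham--Rhodes machinery: if $\genset{S \setminus J} \neq S$ then it lies in some maximal subsemigroup $M$; since $S \setminus M \subseteq J$ and $J$ is non-regular, Proposition~\ref{prop-graham}\eqref{item-prop-nonreg} forces $M = S \setminus J$, so $S\setminus J$ is itself maximal. You instead prove from scratch the stronger assertion that no subsemigroup lies strictly between $S \setminus J$ and $S$: fixing $z \in T \cap J$ and writing an arbitrary $w \in J$ as $w = azb$ with $a, b \in S^1$, you use the $\J$-order to place $az$ and $zb$ in $J$, and then the nullity of $J^*$ --- which does follow from the Rees-theorem dichotomy quoted in the paper, since a non-regular $\J$-class has no idempotents and so its principal factor cannot be a regular Rees $0$-matrix semigroup over a group --- to conclude $a, b \notin J$, whence $w \in T$. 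Your argument is self-contained and elementary, needing neither Lemma~\ref{lem-graham} nor Proposition~\ref{prop-graham}, and it yields the sharper fact that the interval of subsemigroups between $S\setminus J$ and $S$ is trivial; the paper's version is shorter only because the heavy structural results are already assumed. One small caveat on your closing remark: the nullity of the principal factor of a non-regular $\J$-class is itself a consequence of finiteness (equivalently, of stability), so finiteness has not been avoided so much as absorbed into the quoted dichotomy for $J^*$.
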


\begin{proof}
   Clearly, $S \setminus J \subseteq \genset{S \setminus J} \subseteq S$.
   If $\genset{S \setminus J} \neq S$, then $\genset{S \setminus J}$ is a proper
   subsemigroup of $S$, and is therefore contained in a maximal subsemigroup of
   $S$ arising from $J$.  However, by Proposition~\ref{prop-graham}, the only
   maximal subsemigroup of $S$ that can arise from a non-regular $\J$-class is
   formed by removing it. Therefore $S \setminus J$ is a maximal subsemigroup of
   $S$.  
\end{proof}

\begin{prop}\label{prop-non-regular-arbitrary}
  Let $S$ be a finite semigroup generated by $X$, let $J$ be a non-regular
  $\J$-class of $S$, and let $X ^
  {\prime} \subseteq X$ be the set of generators whose $\J$-classes
  are strictly greater than $J$ in the $\J$-class partial order on
  $S$.  Then $S \setminus J$ is a maximal subsemigroup of $S$ if and only if
  $\genset{X'}\cap J = \varnothing$.
\end{prop}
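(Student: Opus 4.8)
The plan is to combine Lemma~\ref{lem-semigroup-remove-non-regular} with Lemma~\ref{lem-subsemigroup}, so that the question of whether $S\setminus J$ is maximal reduces to whether $S\setminus J$ is a subsemigroup at all, and then to recognise that the only obstruction to this is some generator-expression over $X'$ landing in $J$. First I would observe that, by Lemma~\ref{lem-semigroup-remove-non-regular}, $S\setminus J$ is a maximal subsemigroup of $S$ if and only if $S\setminus J$ is a proper subsemigroup of $S$; since $J\neq\varnothing$, properness is automatic, so the claim is equivalent to: $S\setminus J$ is a subsemigroup of $S$ if and only if $\genset{X'}\cap J=\varnothing$.

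Next I would apply Lemma~\ref{lem-subsemigroup} with $T = S\setminus J$. Here $T\cap J=\varnothing$, so conditions~\eqref{item-assumption-2} and~\eqref{item-assumption-3} of that lemma are vacuously satisfied (there are simply no elements $x\in T\cap J$ to test). Thus $T=S\setminus J$ is a subsemigroup of $S$ if and only if condition~\eqref{item-assumption-1} holds, namely $\genset{X'}\subseteq S\setminus J$, which says precisely $\genset{X'}\cap J=\varnothing$. Chaining the two equivalences gives the proposition.

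The one point that deserves care — and the place I expect to spend the most attention — is checking that Lemma~\ref{lem-subsemigroup} genuinely applies, i.e.\ that $S\setminus(S\setminus J)\subseteq J$: this holds trivially with equality, $S\setminus(S\setminus J)=J\subseteq J$. I would also remark on why the reverse implication is not circular: if $\genset{X'}\cap J=\varnothing$ then Lemma~\ref{lem-subsemigroup} shows $S\setminus J$ is a subsemigroup, and then Lemma~\ref{lem-semigroup-remove-non-regular} (or directly Proposition~\ref{prop-graham}, as in the proof of that lemma) upgrades ``subsemigroup'' to ``maximal subsemigroup'', since the only candidate maximal subsemigroup omitting $J$ is $S\setminus J$ itself. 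Conversely, if $\genset{X'}\cap J\neq\varnothing$, then some element of $J$ is a product of generators lying in $\J$-classes above $J$, so $S\setminus J$ is not closed under multiplication, hence not a subsemigroup, hence a fortiori not a maximal subsemigroup. No genuine obstacle arises; the whole argument is an assembly of the two preceding lemmas, with the non-regularity of $J$ entering only through its use in Lemma~\ref{lem-semigroup-remove-non-regular} (via Proposition~\ref{prop-graham}).
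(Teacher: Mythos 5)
Your argument is correct, and it is close in spirit to the paper's, but the decomposition is genuinely different in one respect: the paper's proof of this proposition does not invoke Lemma~\ref{lem-subsemigroup} at all. The paper proves the forward direction exactly as you do in substance ($\genset{X'}\leq\genset{S\setminus J}=S\setminus J$), but for the converse it argues by contraposition: if $S\setminus J$ is not maximal then Lemma~\ref{lem-semigroup-remove-non-regular} gives $\genset{S\setminus J}=S$, and it then asserts that this forces $\genset{X'}\cap J=J\neq\varnothing$ --- a step that silently uses stability (every factorisation of an element of $J$ over $S\setminus J$ involves only elements, hence only generators, whose $\J$-classes lie strictly above $J$). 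Your route packages exactly that reasoning into the already-proved Lemma~\ref{lem-subsemigroup}: with $T=S\setminus J$ conditions~\eqref{item-assumption-2} and~\eqref{item-assumption-3} are vacuous, so being a subsemigroup is equivalent to condition~\eqref{item-assumption-1}, i.e.\ to $\genset{X'}\cap J=\varnothing$, and Lemma~\ref{lem-semigroup-remove-non-regular} upgrades ``proper subsemigroup'' to ``maximal''. What your version buys is that the only nontrivial implication is discharged by citation rather than by an unstated argument; what the paper's version buys is brevity and independence from Lemma~\ref{lem-subsemigroup}. Both are valid, and your observation that non-regularity enters only through Lemma~\ref{lem-semigroup-remove-non-regular} is accurate.
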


\begin{proof}
  ($\Rightarrow$) 
  Since $\genset{X ^ {\prime}} \leq \genset{S \setminus J} = S \setminus J$, it
  follows that $\genset{X ^ {\prime}}\cap J = \varnothing$.

  ($\Leftarrow$) 
  We prove the contrapositive. If $S\setminus J$ is not a maximal subsemigroup,
  then, by Lemma~\ref{lem-semigroup-remove-non-regular}, $S\setminus J$ is a
  generating set for $S$. Hence $\genset{X'} \cap J = J\not=\varnothing$.
\end{proof}

By Proposition~\ref{prop-non-regular-arbitrary}, if $J$ is a maximal
non-regular $\J$-class of $S$, then $S \setminus J$ is a maximal
subsemigroup of $S$.

\begin{cor}
  Let $J$ be a non-regular $\J$-class of a finite semigroup $S$
  generated by $X$ where $x\not\in \genset{X\setminus\{x\}}$ for all $x\in X$.
  Then $S \setminus J$ is not a maximal subsemigroup of $S$ if and only if 
  $J \cap X = \varnothing$.
\end{cor}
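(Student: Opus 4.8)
The plan is to deduce this corollary directly from Proposition~\ref{prop-non-regular-arbitrary}, using the extra hypothesis that $X$ is an irredundant generating set (no generator lies in the subsemigroup generated by the others) to control the set $X'$ of generators lying in $\J$-classes strictly above $J$. By Proposition~\ref{prop-non-regular-arbitrary}, $S\setminus J$ is a maximal subsemigroup if and only if $\genset{X'}\cap J=\varnothing$; contrapositively, $S\setminus J$ is \emph{not} maximal if and only if $\genset{X'}\cap J\neq\varnothing$, which (since $S\setminus J$ is the unique candidate maximal subsemigroup omitting $J$, and $S\setminus J\subseteq \genset{X'}$ would force equality) is equivalent to $\genset{X'}\cap J=J$, i.e.\ to $J\subseteq\genset{X'}$. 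So the task reduces to showing that, under irredundancy, $J\subseteq\genset{X'}$ if and only if $J\cap X\neq\varnothing$.

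First I would prove the easy direction: if $x\in J\cap X$, then $x\in J$, but $x\notin\genset{X\setminus\{x\}}\supseteq\genset{X'}$ since $X'\subseteq X\setminus\{x\}$ (as $x\in J$ means $J_x=J\not>J$, so $x\notin X'$); hence $x\notin\genset{X'}$, so $J\not\subseteq\genset{X'}$, and therefore $S\setminus J$ \emph{is} maximal, giving the ``only if'' of the stated equivalence in contrapositive form. For the converse, suppose $J\cap X=\varnothing$. Then every generator $x\in X$ has $J_x\neq J$, so either $J_x>J$ (i.e.\ $x\in X'$) or $J_x\not\geq J$ (i.e.\ $x\in X\setminus X'$ and $J_x$ does not lie above $J$). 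I would argue that elements of $J$ can only be produced as products involving generators from $\J$-classes above $J$: any product of generators equal to an element $s\in J$ must, reading the generators left to right, have every partial product lying in a $\J$-class $\geq J_s=J$ (since multiplying can only move down the $\J$-order), so in particular every generator appearing in such a product lies in a $\J$-class $\geq J$; since $J\cap X=\varnothing$ rules out generators in $J$ itself, all the generators used lie in $\J$-classes strictly above $J$, i.e.\ in $X'$. Hence $s\in\genset{X'}$, so $J\subseteq\genset{X'}$, and $S\setminus J$ is not maximal.

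The combination of the two directions gives exactly the corollary: $S\setminus J$ is not maximal $\iff$ $J\subseteq\genset{X'}$ $\iff$ $J\cap X=\varnothing$. I would phrase the central lemma about products carefully using stability (Lemma~\ref{lem-d-finite}) or just the general fact that in any finite semigroup $J_{ab}\leq J_a$ and $J_{ab}\leq J_b$ in the $\J$-order, which immediately forces every generator in a factorization of $s$ to generate a $\J$-class $\geq J_s$.

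The main obstacle I anticipate is being precise about the interplay between $X'$ and the irredundancy hypothesis, and in particular checking that $x\in J\cap X$ genuinely cannot be recovered from the other generators \emph{through a product landing in} $J$ even if $x$ could in principle be written using generators from above $J$ — this is exactly what irredundancy ($x\notin\genset{X\setminus\{x\}}$) rules out, and it is the one place the extra hypothesis of the corollary (beyond Proposition~\ref{prop-non-regular-arbitrary}) is used. Everything else is a routine translation through Proposition~\ref{prop-non-regular-arbitrary} and Proposition~\ref{prop-graham}.
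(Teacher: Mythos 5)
Your argument is correct and is essentially the paper's proof: the paper likewise deduces the forward direction from the fact that non-maximality of $S\setminus J$ forces $\genset{X'}\cap J=J$ (via Lemma~\ref{lem-semigroup-remove-non-regular}) and then contradicts irredundancy with $x\in\genset{X'}\leq\genset{X\setminus\{x\}}$, while for $J\cap X=\varnothing$ it simply notes $X\subseteq S\setminus J$, hence $\genset{S\setminus J}=S$ (your longer route, proving $J\subseteq\genset{X'}$ directly by the $\J$-order argument on factorizations, is also valid). The one thin spot is the parenthetical justifying ``$\genset{X'}\cap J\neq\varnothing$ if and only if $\genset{X'}\cap J=J$'' --- the clause ``$S\setminus J\subseteq\genset{X'}$ would force equality'' does not parse, and you genuinely need this equivalence because your forward direction only establishes $J\not\subseteq\genset{X'}$ whereas Proposition~\ref{prop-non-regular-arbitrary} requires $\genset{X'}\cap J=\varnothing$; the fix is exactly the paper's: by Lemma~\ref{lem-semigroup-remove-non-regular} non-maximality gives $\genset{S\setminus J}=S$, and then the same factorization argument you use elsewhere (every factor of a product landing in $J$ has $\J$-class $\geq J$, hence lies in $\genset{X'}$) yields $J\subseteq\genset{X'}$.
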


\begin{proof}
  $(\Leftarrow)$ 
  If $J \cap X = \varnothing$, then $S = \genset{X} \subseteq \genset{S
  \setminus J} \subseteq S$, i.e.\ $\genset{S \setminus J} = S$. 

  $(\Rightarrow)$ 
  Since $S\setminus J$ is not maximal,
  Lemma~\ref{lem-semigroup-remove-non-regular} implies that $S\setminus J$
  generates $S$. Thus $\genset{X'}\cap J = J$ and so if $x\in J\cap X$, then
  $x\in \genset{X'} \leq \genset{X\setminus\{x\}}$, which contradicts the
  assumption on $X$. Therefore $J\cap X = \varnothing$. 
\end{proof}

%%%%%%%%%%%%%%%%%%%%%%%%%%%%%%%%%%%%%%%%%%%%%%%%%%%%%%%%%%%%%%%%%%%%%%%%%%%%%%%%

\subsection{Maximal subsemigroups arising from regular $\J$-classes that
intersect every $\H$-class:
type~\eqref{item-every}}\label{subsec-intersect-every-H-class}

In this section, we consider those maximal subsemigroups of a finite semigroup
$S$ that arise from the exclusion of elements in a regular $\J$-class $J$ of
$S$, and that intersect every $\H$-class of $S$ non-trivially.  In other words,
we are considering maximal subsemigroups of type~\eqref{item-every}.
Since the principal factor $J^{*}$ is isomorphic to a regular Rees 0-matrix
semigroup over a group, the algorithms described in Section~\ref{section-rees}
could be used to compute the maximal subsemigroups of $J ^ *$. 
The purpose of this section is to characterise the maximal subsemigroups of
type~\eqref{item-every} in terms of the maximal subsemigroups of $J^{*}$.

\begin{lem}\label{lem-intersect}
  Let $S$ be a finite semigroup, and let $T$ be a subset of $S$ such that $S
  \setminus T$ is contained in a regular $\J$-class $J$ of $S$. Suppose that $T$
  intersects every $\H$-class of $S$ and let $E$ be a set consisting of one
  idempotent from each $\L$-class of $J$.  Then $T$ is a subsemigroup of $S$ if
  and only if 
  \begin{enumerate}[label=\emph{(\roman*)},ref=\roman*]
    \item 
      $EX' \subseteq T$;
    \item 
      if $x, y \in T \cap J$, then $xy \in J$ implies $xy \in T$
      (Lemma~\emph{\ref{lem-subsemigroup}}\eqref{item-assumption-2}).
    \end{enumerate}
  
\end{lem}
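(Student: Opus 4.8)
The plan is to show that Lemma~\ref{lem-intersect} is a specialization of Lemma~\ref{lem-subsemigroup} in which condition~\eqref{item-assumption-3} can be replaced by the weaker-looking condition $EX'\subseteq T$. The forward implication is immediate: if $T$ is a subsemigroup, then condition~(ii) holds because $T$ is closed under multiplication and $J$ is a $\J$-class, and condition~(i) holds because $E\subseteq J\subseteq S$ and $X'\subseteq S$, so $EX'\subseteq S$, and every element of $EX'$ lying outside $J$ is automatically in $T$ (since $S\setminus T\subseteq J$), while any element of $EX'$ lying inside $J$ is a product of two elements of $T$ and hence in $T$. So the real content is the converse.

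For the converse I would verify the three hypotheses of Lemma~\ref{lem-subsemigroup}. Hypothesis~\eqref{item-assumption-2} is exactly~(ii), so nothing to do there. For hypothesis~\eqref{item-assumption-1}, I need $\genset{X'}\subseteq T$; since $S\setminus T\subseteq J$ it suffices to show $\genset{X'}\cap J\subseteq T$, i.e. that whenever a product of generators from $X'$ lands in $J$, it lies in $T$. The key point is that an element $x\in\genset{X'}\cap J$ has its $\R$-class (equivalently its $\L$-class) inside $J$, and because $J$ is regular and $T$ meets every $\H$-class of $S$, one can write $x$ as $e\cdot x$ for a suitable idempotent $e\in E$ in the same $\L$-class, after first using stability (Lemma~\ref{lem-d-finite}) to locate the relevant idempotent; then $e\in J\cap T$ and we are reduced to multiplying an element of $T\cap J$ by something in $\genset{X'}$, which is handled by the third hypothesis once established. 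For hypothesis~\eqref{item-assumption-3} itself, suppose $x\in T\cap J$, $y\in\genset{X'}$, and $xy\in J$; then $x\,\R\,xy$ by stability, so $x$ and $xy$ share an $\L$-class representative, and there is an idempotent $e\in E$ with $e\,\L\,x$; writing $x = x e$ is not quite what we want, so instead one uses that $xy$ lies in the same $\R$-class as $x$, picks the idempotent $e\in E$ in the $\L$-class of $xy$, and writes $xy = (xy)e$, then relates this to $e y'$ for an appropriate rearrangement — the cleanest route is to observe $xy\,\H\,(ex')$-type product where $ex'$ appears among the generators $EX'\subseteq T$ modulo multiplication by elements already known to be in $T\cap J$. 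The dual statement for $yx\in J$ is symmetric, using $E$ on the $\R$-side via the analogous idempotent selection.

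The main obstacle will be making the last paragraph's idempotent-bookkeeping precise: when $xy\in J$ with $x\in T\cap J$ and $y$ a product of generators strictly above $J$, I must express $xy$ as a product of elements already certified to lie in $T$, and the only generators-times-idempotents I am handed for free are $EX'$ (one idempotent per $\L$-class, times single generators from $X'$). So I expect to argue by induction on the length of a word for $y$ over $X'$: reduce $xy = x y_1 \cdots y_m$ by peeling off one generator at a time, at each stage using stability to confirm the intermediate product stays in $J$ (or else it has left $J$ forever and lies in $T$ trivially), and using Green's Lemma together with an idempotent $e\in E$ in the appropriate $\L$-class to rewrite $(\text{current element})\,y_i$ as $(\text{element of }T\cap J)\cdot(e y_i)$ with $e y_i\in EX'\subseteq T$, then invoking~(ii) to conclude the product is in $T$. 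Once~\eqref{item-assumption-1}--\eqref{item-assumption-3} of Lemma~\ref{lem-subsemigroup} are in hand, that lemma gives that $T$ is a subsemigroup, completing the proof.
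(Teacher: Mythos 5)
Your overall strategy---reducing to Lemma~\ref{lem-subsemigroup} and handling products with $\genset{X'}$ by peeling off one generator at a time while inserting idempotents from $E$ so that each factor $ey_i$ lies in $EX'\subseteq T$---is essentially the paper's argument (the paper packages the peeling as the single claim $E\genset{X'}\subseteq T$, proved via the telescoping identity $e_1x_1\cdots x_n=\prod_k e_kx_k$). The forward direction and the $xy\in J$ half of condition~(\ref{item-assumption-3}) of Lemma~\ref{lem-subsemigroup} are recoverable from your final paragraph, although the middle of your plan is confused: writing $x=xe$ for the idempotent $e\in E$ with $e\L x$ is exactly what is wanted, since then $xy=x(ey)$ with $ey\in E\genset{X'}\subseteq T$ and $ey \in J$, and one concludes by~(ii); the detour through ``$xy\H(ex')$-type products'' is unnecessary and not well defined.

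The genuine gap is the $yx\in J$ half of condition~(\ref{item-assumption-3}), which you dismiss as ``symmetric, using $E$ on the $\R$-side via the analogous idempotent selection.'' There is no $E$ on the $\R$-side: $E$ is fixed to contain one idempotent per $\L$-class, so its elements are right identities for their $\L$-classes, and the hypothesis supplies only $EX'\subseteq T$, not $X'E\subseteq T$. Peeling $y=y_1\cdots y_m$ from the left reduces you to showing $y_1z\in T$ for $z\in T\cap J$ with $y_1z\in J$, and the mirror-image rewriting would require $y_1e\in T$ for a left identity $e$ of $z$, which you have not been given. Closing this case needs two ingredients absent from your proposal: first, that $T$ contains every idempotent of $J$ (since $T$ meets every $\H$-class and is finite, its intersection with each group $\H$-class is a subgroup and so contains the identity); second, by regularity choose an idempotent $f$ with $f(yx)=yx$, take $e\in E$ with $e\L f$ so that $fe=f$, and write $yx=f(ey)x$---now $f\in T\cap J$, $ey\in E\genset{X'}\subseteq T$ with $ey\in J$, and two applications of~(ii) finish. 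Without this step your induction does not close.
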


\begin{proof}
  $(\Rightarrow)$
  Since $T$ is a subsemigroup,
  Lemma~\ref{lem-subsemigroup}\eqref{item-assumption-2} holds.  Since $T$ is
  finite and intersects every $\H$-class of $S$, it contains every idempotent of
  $S$. By assumption, $X'\subseteq T$, and so $EX' \subseteq T$.

  $(\Leftarrow)$
  It suffices to show that the remaining conditions of
  Lemma~\ref{lem-subsemigroup} hold.  In order to do this, we first show that
  $E\genset{X^{\prime}} \subseteq T$.  Let $x \in E\genset{X ^ \prime} \cap J$.
  By definition, there exists an idempotent $e_{1} \in E$ and a sequence of
  generators $x_{1}, \ldots, x_{n} \in X^{\prime}$ such that $x = e_{1}x_{1}
  \cdots x_{n}$.  Since the element $e_{1}$ and the product $e_{1}x_{1}\cdots
  x_{n}$ are both members of $J$, it follows that the intermediate product
  $e_{1}x_{1}\cdots x_{k}$ is in $J$ for every $k \in \{1,\ldots,n\}$.  Hence by
  definition of the set $E$, for each index $k < n$ there exists an idempotent
  $e_{k+1} \in E$ such that $e_{k+1} \L e_1x_{1} \cdots x_{k}$, and in
  particular $(e_{1}x_{1}\cdots x_{k})e_{k+1} = e_{1}x_{1}\cdots x_{k}$,
  since an idempotent is a right identity for its
  $\L$-class~\cite[Proposition~2.3.3]{Howie1995aa}.
  Therefore $x =\prod_{k = 1}^{n} e_{k} x_{k}$.  Furthermore, for each $k \in
  \{1,\ldots,n\}$ the element $e_{k}x_{k}$ is contained in $T$ by assumption,
  and is contained in $J$ since $$J = J_{e_{k}} \geq J_{e_{k}x_{k}} \geq J_{x} =
  J.$$ By repeated application of condition~\eqref{item-assumption-2}, it
  follows that $x = \prod_{k = 1}^{n} e_{k} x_{k} \in T \cap J$, and so
  $E\genset{X^{\prime}} \subseteq T$.

  Note that condition~\eqref{item-assumption-2} is equivalent to the statement
  that $(T \cap J) \cup \{0\}$ is a subsemigroup of $J^{*}$.  Since $T$
  intersects every $\H$-class of $J$ and $J$ is finite, it follows that $T$
  contains every idempotent of $J$.
  
  To prove that condition~\eqref{item-assumption-1} of
  Lemma~\ref{lem-subsemigroup} holds, let $x \in \genset{X^{\prime}} \cap J$.
  Since $J$ is a regular $\J$-class, there exists an idempotent $f \in T\cap J$
  such that $fx = x$.  By definition of the set $E$, there exists an idempotent
  $e \in E$ such that $e \L f$, and so $x = fx = (fe)x = f(ex)$.  We have $ex
  \in T\cap J$ since $E\genset{X^{\prime}} \subseteq T$, and since $(T
  \cap J)\cup \{0\}$ is a subsemigroup of $J^{*}$, it follows that $x = f(ex)
  \in T$.

  To prove that condition~\eqref{item-assumption-3} of
  Lemma~\ref{lem-subsemigroup} holds, let $x \in T \cap J$ and $y \in \genset{X
  ^ {\prime}}$.  First suppose that $xy \in J$.  By assumption, there exists an
  idempotent $e \in E$ such that $x = xe$. Since $xy = x(ey) \in J$ it follows
  that $$J = J_{e} \geq J_{ey} \geq J_{x(ey)} = J_{xy} = J,$$ and so $ey \in J$.
  Furthermore, $ey \in E \genset{X ^ {\prime}} \subseteq T$. Since $x, ey \in T
  \cap J$, $xy \in J$, and $(T \cap J)\cup \{0\}$ is a subsemigroup of $J^{*}$,
  it follows that $xy = x(ey) \in T \cap J$.  Finally suppose that $yx \in J$.
  Since $J$ is a regular $\J$-class, there exists an idempotent $f \in T \cap J$
  such that $f(yx) = yx$. By definition of $E$, there exists $e \in E$ such that
  $fe = f$, and so $yx = f(yx) = (fe)yx = f(ey)x$. Note that $E\genset{X ^
  {\prime}} \subseteq T$ implies that $ey \in T$, and $ey \in J$ since 
  $$J = J_{e} \geq J_{ey} \geq J_{feyx} = J_{yx} = J.$$ 
  Finally, $f, ey, x \in T \cap
  J$ and $(T \cap J) \cup \{0\}$ is a subsemigroup of $J^{*}$, and it follows
  that $yx \in T$.
\end{proof}

Note that dual results hold if we replace $E$ by a set consisting of one
idempotent from each $\R$-class of $J$ and we replace $EX^{\prime}$ by
$X^{\prime}E$.

Let $\phi$ be the isomorphism from  $J ^ *$ to a Rees 0-matrix semigroup
defined at the start of Section~\ref{section-arbitrary} and  suppose that $T$
is a subset of $S$ such that $S \setminus T \subseteq J$ and $T$ intersects
every $\H$-class of $J$ non-trivially.  Condition~\eqref{item-assumption-2} of
Lemma~\ref{lem-subsemigroup} is equivalent to the statement that $(T \cap
J)\cup \{0\}$ is a subsemigroup of $J^{*}$, the principal factor of $J$. Since
$\phi$ is an isomorphism, this is equivalent to the statement that $(T \cap
J)\phi \cup \{0\}$ is a subsemigroup of $(J^{*})\phi$. 

The following corollary is the main result of this section.

\begin{cor}\label{cor-intersect}
  Let $S$ be a finite semigroup, and let $T$ be a subset of $S$ such that $S
  \setminus T$ is contained in a regular $\J$-class $J$ of $S$. Suppose that $T$
  intersects every $\H$-class of $S$ and let $E$ be a set consisting of one
  idempotent from each $\L$-class of $J$.  Then $T$ is a maximal subsemigroup of
  $S$ if and only if $(T \cap J)\phi \cup \{0\}$ is a maximal subsemigroup of
  $(J^{*})\phi$ containing $(EX')\phi$.
\end{cor}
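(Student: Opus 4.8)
The plan is to deduce Corollary~\ref{cor-intersect} from Lemma~\ref{lem-intersect} together with the observation, recorded just before the corollary, that condition~\eqref{item-assumption-2} of Lemma~\ref{lem-subsemigroup} for a set $T$ with $S\setminus T\subseteq J$ and $T$ meeting every $\H$-class of $J$ is equivalent to $(T\cap J)\phi\cup\{0\}$ being a subsemigroup of $(J^*)\phi$. First I would prove the forward direction: assume $T$ is a maximal subsemigroup of $S$. By Lemma~\ref{lem-intersect}, $T$ is a subsemigroup, so $(T\cap J)\phi\cup\{0\}$ is a subsemigroup of $(J^*)\phi$ containing $(EX')\phi$ (since $EX'\subseteq T$ and $EX'\subseteq J$). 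For maximality of $(T\cap J)\phi\cup\{0\}$ in $(J^*)\phi$, suppose $(T\cap J)\phi\cup\{0\}\subseteq N\subsetneq (J^*)\phi$ with $N$ a subsemigroup of $(J^*)\phi$. Pull back: $N' = (N\phi^{-1}\cap J)\cup(S\setminus J)$ — more precisely, set $T' = (S\setminus J)\cup \{\,j\in J : j\phi\in N\,\}$. Then $S\setminus T'\subseteq J$ and $T'$ still meets every $\H$-class of $J$ (because $(EX')\phi\subseteq N$ guarantees all idempotents of $J$ lie in $T'$, hence $T'$ is regular and meets every $\H$-class — this is the point that needs care). Applying Lemma~\ref{lem-intersect} to $T'$: condition (i), $EX'\subseteq T'$, holds because $(EX')\phi\subseteq N$; condition (ii) holds because $N$ is a subsemigroup. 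So $T'$ is a subsemigroup of $S$ with $T\subseteq T'\subsetneq S$, and maximality of $T$ forces $T'=T$, whence $N = (T\cap J)\phi\cup\{0\}$.

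Next I would prove the converse. Assume $(T\cap J)\phi\cup\{0\}$ is a maximal subsemigroup of $(J^*)\phi$ containing $(EX')\phi$. Then condition (ii) of Lemma~\ref{lem-intersect} holds by the equivalence noted above, and condition (i), $EX'\subseteq T$, holds since $(EX')\phi\subseteq (T\cap J)\phi\cup\{0\}$ and $EX'\subseteq J$ (so the elements of $EX'$ in $J$ are exactly those whose image lies in $(T\cap J)\phi$); one must also check $EX'\cap(S\setminus J)$ — but any element of $EX'$ outside $J$ lies in $S\setminus J\subseteq T$ automatically, so $EX'\subseteq T$ either way. Hence by Lemma~\ref{lem-intersect}, $T$ is a subsemigroup of $S$; it is proper since $S\setminus T\subseteq J$ is nonempty (if $S\setminus T=\varnothing$ then $(T\cap J)\phi\cup\{0\} = (J^*)\phi$, contradicting properness of the maximal subsemigroup). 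For maximality of $T$ in $S$: suppose $T\subseteq T''\subsetneq S$ with $T''$ a subsemigroup. By Lemma~\ref{lem-graham}, $S\setminus T''$ is contained in a single $\J$-class, and since $S\setminus T''\subseteq S\setminus T\subseteq J$, that class is $J$; also $T''\supseteq T$ meets every $\H$-class of $J$, so $T''$ meets every $\H$-class of $S$. By Lemma~\ref{lem-subsemigroup}\eqref{item-assumption-2} applied to $T''$, $(T''\cap J)\phi\cup\{0\}$ is a subsemigroup of $(J^*)\phi$ sitting between $(T\cap J)\phi\cup\{0\}$ and $(J^*)\phi$; it is proper in $(J^*)\phi$ because $T''\subsetneq S$ gives $S\setminus T''\subseteq J$ nonempty, so some element of $J$ is missing from $T''\cap J$. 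Maximality of $(T\cap J)\phi\cup\{0\}$ then forces $(T''\cap J)\phi\cup\{0\} = (T\cap J)\phi\cup\{0\}$, i.e.\ $T''\cap J = T\cap J$, and since $T''$ and $T$ agree outside $J$ as well, $T''=T$.

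The main obstacle I anticipate is the bookkeeping around the $\H$-class intersection hypothesis under pullback: I need, for an intermediate subsemigroup $N$ of $(J^*)\phi$ containing $(EX')\phi$, that its preimage in $S$ still intersects every $\H$-class of $S$ (so that Lemma~\ref{lem-intersect} applies rather than the more general Lemma~\ref{lem-subsemigroup}, and so that the hypotheses of Corollary~\ref{cor-intersect} are self-consistent). The key is that $(EX')\phi\subseteq N$ forces all idempotents of $J$ into the preimage; as noted in the discussion before Lemma~\ref{lem-super}, a subsemigroup of a Rees $0$-matrix semigroup containing all the idempotents and meeting (or rather, we want it to meet) every $\H$-class — actually here the relevant fact is just that the preimage is regular, hence by the remark before the three technical lemmas its $\H$-classes coincide with those of $J$, and containing all idempotents plus the original $T$'s intersections is enough. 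This is mostly a matter of carefully invoking the earlier setup rather than a genuinely hard argument, but it is where the proof could go wrong if stated sloppily.
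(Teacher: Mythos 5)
Your argument is correct and is essentially the paper's own proof: both directions reduce to Lemma~\ref{lem-intersect} via the correspondence between subsemigroups of $S$ containing $T$ (which automatically meet every $\H$-class and contain $EX'$) and subsemigroups of $(J^{*})\phi$ containing $(T \cap J)\phi \cup \{0\}$, with your "arbitrary intermediate subsemigroup" phrasing differing only cosmetically from the paper's "maximal subsemigroup containing it" phrasing. One small repair: the pullback $T'$ meets every $\H$-class of $S$ simply because $T \subseteq T'$ and $T$ does by hypothesis --- not because $(EX')\phi \subseteq N$ forces the idempotents of $J$ into $T'$ (the set $EX'$ need not contain those idempotents, and containing all idempotents would not by itself give non-trivial intersection with the non-group $\H$-classes); likewise the appeal to Lemma~\ref{lem-graham} in the converse is unnecessary, since $S \setminus T'' \subseteq S \setminus T \subseteq J$ already.
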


\begin{proof}
  $(\Rightarrow)$
  Let $U$ be a subset of $J$ such that $U \cup \{0\}$ is a
  maximal subsemigroup of $(J^{*})\phi$ containing $(T \cap J)\phi \cup
  \{0\}$.  Then by Lemma~\ref{lem-intersect}, the set $M = (S \setminus J) \cup
  U$ is a proper subsemigroup of $S$ containing $T$. Since $T$ is maximal, it
  follows that $T = M$, and $U = T \cap J$.

  $(\Leftarrow)$
  Let $M$ be a maximal subsemigroup of $S$ containing $T$.
  Then $(M \cap J)\phi \cup \{0\}$ is a proper subsemigroup of $(J^{*})\phi$
  containing $(T \cap J)\phi \cup \{0\}$. Since the latter is a maximal
  subsemigroup of $(J^{*})\phi$, it follows that $T \cap J = M \cap J$, and
  hence $T = M$.
\end{proof}

To calculate the maximal subsemigroups of type~\eqref{item-every} arising from
$J$, we compute the set $EX^{\prime}$, we compute an isomorphism $\phi$ from
$J^{*}$ to a normalized regular Rees 0-matrix semigroup $R$, and then we search
for the maximal subsemigroups of $R$ containing $(EX^{\prime})\phi$ that
intersect every $\H$-class of $R$.  Computing the maximal subsemigroups of $R$
that have non-trivial intersection with every $\H$-class was the subject of
Algorithm~\ref{algorithm-type-6}. One approach would be to simply compute all of
the maximal subsemigroups using Algorithm~\ref{algorithm-type-6} and then
discard those which do not contain $(EX^{\prime})\phi$. It is possible to modify
Algorithm~\ref{algorithm-type-6} to compute directly only those maximal
subsemigroups containing $(EX^{\prime})\phi$. However, we will not do this in
detail.

A generating set for any such maximal subsemigroup is given by a generating set
for $S \setminus J$, along with the preimage under $\phi$ of a generating set
for the maximal subsemigroup of $R$ (minus the element $0 \in R$).
Algorithm~\ref{algorithm-type-6} produces generating sets for the maximal
subsemigroups that it finds. 

%%%%%%%%%%%%%%%%%%%%%%%%%%%%%%%%%%%%%%%%%%%%%%%%%%%%%%%%%%%%%%%%%%%%%%%%%%%%%%%%

\subsection{Maximal subsemigroups arising from regular $\J$-classes that
are unions of $\H$-classes: types~\eqref{item-both}--\eqref{item-empty}}

In this section, we consider those maximal subsemigroups of a finite semigroup
$S$ that arise from the exclusion of elements in a regular $\J$-class of $S$,
and that are unions of $\H$-classes of $S$.  The principal purpose of this
section is to give necessary and sufficient conditions for a subset of a finite
semigroup to be a maximal subsemigroup in terms of the properties of certain
associated graphs. The formulation in terms of graphs makes the problem of
computing maximal subsemigroups of this type more tractable. In particular, we
can take advantage of several well-known algorithms from graph theory, such as
those for computing strongly connected components (see~\cite{Gabow2000aa,
Tarjan1972aa} or~\cite[Section 4.2]{Sedgewick2011aa}) and finding all maximal
cliques~\cite{Bron1973}.

A \textit{digraph} is a pair $(V, E)$ consisting of a set of \textit{vertices}
$V$ and a set of \textit{edges} $E \subseteq V \times V$ such that $(u,v) \in E$
implies $u \not =v$.  An edge $(u, v) \in E$ is \textit{an edge from $u$ to
$v$}.  
If $\Gamma = (V, E)$ is a digraph and $W$ is a subset of the vertices of
$\Gamma$, then the \textit{subdigraph induced by $W$} is the digraph with
vertices $W$ and edges $\set{(u, v)\in E}{u, v\in W}$.
A vertex $v$ in a digraph $\Gamma = (V, E)$ is a \textit{source} if $(u,
v)\not\in E$ for all $u\in V$, and is a \textit{sink} if $(v, u)\not\in E$ for
all $u\in V$.  If $\Gamma = (V, E)$ is a digraph, then a \textit{path} is a
sequence of distinct vertices $(v_{1}, \ldots, v_{m})$, $m \geq 1$, of $\Gamma$
such that $(v_{i}, v_{i+1})\in E$ for all $i \in \{1, \ldots, m - 1\}$.   A
path $(v_{1}, \ldots, v_{m})$ is said to be \textit{a path from $v_{1}$ to
$v_{m}$ in $\Gamma$}.  A vertex $v$ is \textit{reachable} from a vertex $u$ if
$u = v$, or there is a path from $u$ to $v$ in $\Gamma$. The \textit{strongly
connected component} of a vertex $v$ of a digraph is the set consisting of all
vertices $u$ such that there is a path from $u$ to $v$ and a path from $v$ to
$u$. Note that our definition of digraphs does not allow loops.  A
\textit{colouring} of a digraph $\Gamma = (V, E)$ is just a function $c: V\to
\N$.

Throughout this section we suppose that $J$ is a regular $\J$-class of the
finite semigroup $S = \genset{X}$. Recall that $X'$ consists of those
generators $x\in X$ such that $J_x > J$.

The following digraphs are central to the results in this section. We define
$\Gamma_{\L}$ to be the quotient of the digraph with vertices $J/\L$ and edges
$$\bigset{ (L_{a}, L_{b})\in J/\L\times J/\L }{ L_{a}x = L_{b},\ L_a\not =L_b,\
\text{for some}\ x \in X ^ {\prime} }$$ 
by its strongly connected components. We define a colouring $c$
of $\Gamma_{\L}$ so that any vertex $V$ containing an $\L$-class which has
non-empty intersection with $\genset{X'}$ has $c(V) = 1$ and every other vertex
$U$ has $c(U) = 0$. The digraph $\Gamma_{\R}$ is defined dually.  Note that
$\Gamma_{\L}$ and $\Gamma_{\R}$ are acyclic digraphs.

We require two additional graphs.  The first graph, $\Delta$ is isomorphic to a
quotient of the Graham-Houghton graph of the principal factor of $J$. We define
$\Delta$ to have vertex set equal to  the disjoint union of the vertices of
$\Gamma_{\L}$ and $\Gamma_{\R}$, and edges $\{U, V\}$ if the intersection of
some $\L$-class in $U$ with an $\R$-class in $V$, or vice versa, is a group
$\H$-class.  The second graph, $\Theta$, has the same vertex set as $\Delta$,
and it has an edge incident to $U$ and $V$ if there is an element of
$\genset{X'}$ in the intersection of some $\L$-class in $U$ with an $\R$-class
in $V$, or vice versa. Both of the graphs $\Delta$ and $\Theta$ are bipartite.

\begin{ex}\label{ex-sec-3}
  The \textit{full transformation monoid} $\mathcal{T}_n$, for $n\in \N$, is
  the semigroup consisting of all mappings from $\{1, \ldots, n\}$ to itself
  (called \textit{transformations}) with the operation of composition of
  functions.  This semigroup plays an analogous role in semigroup theory as
  that played by the symmetric group in group theory. 
  
  Let $W$ be the subsemigroup of $\mathcal{T}_7$ generated by the
  transformations
  \begin{align*}
    x_{1} = \begin{pmatrix}
      1 & 2 & 3 & 4 & 5 & 6 & 7 \\
      1 & 3 & 4 & 1 & 5 & 5 & 5
    \end{pmatrix},\quad &
    x_{2} = \begin{pmatrix}
      1 & 2 & 3 & 4 & 5 & 6 & 7 \\
      1 & 4 & 1 & 3 & 5 & 5 & 5
    \end{pmatrix},\\
    x_{3} = \begin{pmatrix}
      1 & 2 & 3 & 4 & 5 & 6 & 7 \\
      3 & 3 & 1 & 2 & 5 & 5 & 5
    \end{pmatrix},\quad &
    x_{4} = \begin{pmatrix}
      1 & 2 & 3 & 4 & 5 & 6 & 7 \\
      4 & 4 & 2 & 3 & 5 & 5 & 5
    \end{pmatrix},\\
    x_{5} = \begin{pmatrix}
      1 & 2 & 3 & 4 & 5 & 6 & 7 \\
      1 & 1 & 3 & 4 & 5 & 5 & 6
    \end{pmatrix},\quad &
    x_{6} = \begin{pmatrix}
      1 & 2 & 3 & 4 & 5 & 6 & 7 \\
      1 & 2 & 2 & 4 & 5 & 6 & 7
    \end{pmatrix},\\
    x_{7} = \begin{pmatrix}
      1 & 2 & 3 & 4 & 5 & 6 & 7 \\
      1 & 4 & 3 & 4 & 5 & 6 & 7
    \end{pmatrix},\quad &
    x_{8} = \begin{pmatrix}
      1 & 2 & 3 & 4 & 5 & 6 & 7 \\
      1 & 2 & 4 & 4 & 5 & 6 & 7
    \end{pmatrix}.
  \end{align*}

  Let $J$ be the $\J$-class $J_{x_{1}}$.  The following calculations were
  performed with the {\sc GAP}~\cite{GAP4} package {\sc
  Semigroups}~\cite{Mitchell2017aa}. The $\J$-class $J$ is regular, and contains
  the generators $x_{1}, x_{2}, x_{3}$, and $x_{4}$. The remaining generators
  are contained in $\J$-classes that are above $J$ in the $\J$-class partial
  order, and so $X^{\prime} = \{x_{5},x_{6},x_{7},x_{8}\}$. The set of
  $\L$-classes of $J$ is $J / \L = \{ L_{x_{1}}, L_{x_{3}}, L_{x_{4}},
  L_{x_{1}x_{6}} \}$, and the set of $\R$-classes of $J$ is $J / \R = \{
  R_{x_{1}}, R_{x_{2}}, R_{x_{3}}, R_{x_{8}x_{2}}, R_{x_{6}x_{2}},
  R_{x_{7}x_{3}} \}$.

  There are $4$ strongly connected components of $\L$-classes in $J$ each
  consisting of a single $\L$-class. Hence 
  the digraph $\Gamma_{\L}$ has $4$ vertices, one for each strongly connected
  component.  There are also $4$ strongly connected components of $\R$-classes:
  two of which have size $1$, while the other two each have size $2$.
  The digraphs $\Gamma_{\L}$ and $\Gamma_{\R}$ are depicted in
  Figure~\ref{fig-gamma}.

  \begin{figure}
    \begin{center}
    \begin{tikzpicture}
      \node (1) at (5, 4)  {$\{L_{x_{4}}\}$};
      \node (2) at (0, 4)  {$\{L_{x_{3}}\}$};
      \node (3) at (3, 2)  {$\{L_{x_{1}}\}$};
      \node (4) at (3, 0)  {$\{L_{x_{1}x_{6}}\}$};

      \arc{1}{3};
      \arc{2}{3};
      \arc{2}{4};
      \arc{3}{4};

      \node (11) at (8,    2)  {$\{R_{x_{1}}\}$};
      \node (12) at (10.5, 4)  {$\{R_{x_{2}}\}$};
      \node (13) at (10.5, 2)  {$\{R_{x_{8}x_{2}}, R_{x_{6}x_{2}}\}$};
      \node (14) at (10.5, 0)  {$\{R_{x_{3}}, R_{x_{7}x_{3}}\}$};

      \arc{12}{13};
      \arc{13}{14};
    \end{tikzpicture}
    \end{center}
    \caption{The digraphs $\Gamma_{\L}$ and $\Gamma_{\R}$ from
    Example~\ref{ex-sec-3}.}\label{fig-gamma}
  \end{figure}
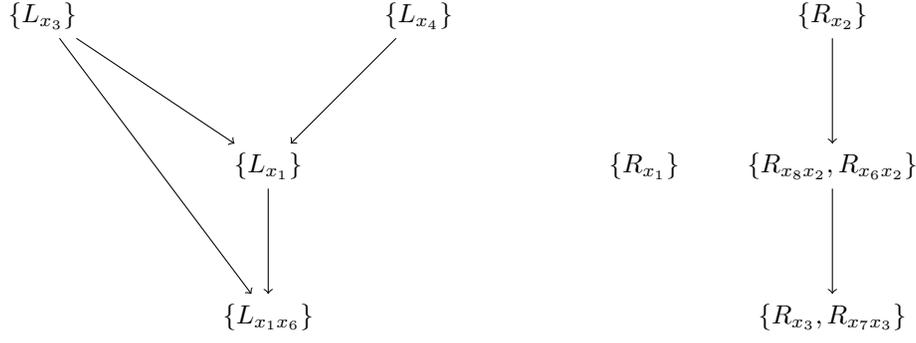

  Since $\Gamma_{\L}$ and $\Gamma_{\R}$ each have four vertices, it follows that
  the bipartite graphs $\Delta$ and $\Theta$ each have eight vertices. These
  graphs are shown in Figures~\ref{fig-delta} and~\ref{fig-theta}.  The set of
  edges of $\Delta$ was determined by computation of the idempotents in $J$.
  There are four elements in $\genset{X^{\prime}} \cap J$: $x_{5}^{2} \in
  L_{x_{1}} \cap R_{x_{3}}$, $x_{4}x_{1}x_{6} \in L_{x_{1}x_{6}} \cap
  R_{x_{3}}$, $x_{7}x_{4}x_{1}x_{6} \in L_{x_{1}x_{6}} \cap R_{x_{7}x_{3}}$, and
  $x_{7}x_{4}x_{1} \in L_{x_{1}} \cap R_{x_{7}x_{3}}$. The $\L$- and
  $\R$-classes of these elements determine the edges in the graph $\Theta$,
  along with the colours of the vertices in $\Gamma_{\L}$ and $\Gamma_{\R}$. The
  vertices of $\Gamma_{\L}$ with colour $1$ are $\{L_{x_{1}}\}$ and
  $\{L_{x_{1}x_{6}}\}$, whilst the only vertex of $\Gamma_{\R}$ with colour $1$
  is $\{R_{x_{3}}, R_{x_{7}x_{3}} \}$.

  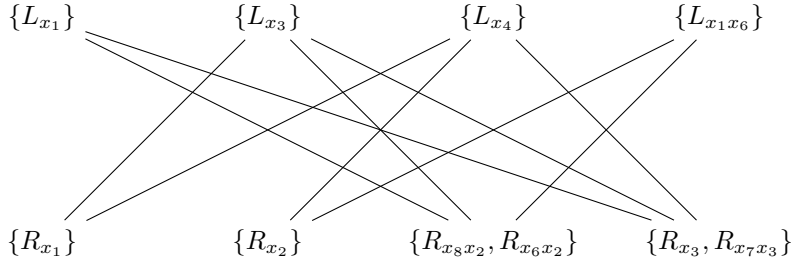
\begin{figure}
    \begin{center}
    \begin{tikzpicture}
      \node (1) at (0,  3)  {$\{L_{x_{1}}\}$};
      \node (2) at (3,  3)  {$\{L_{x_{3}}\}$};
      \node (3) at (6,  3)  {$\{L_{x_{4}}\}$};
      \node (4) at (9,  3)  {$\{L_{x_{1}x_{6}}\}$};

      \node (a) at (0,  0)  {$\{R_{x_{1}}\}$};
      \node (b) at (3,  0)  {$\{R_{x_{2}}\}$};
      \node (c) at (6,  0)  {$\{R_{x_{8}x_{2}}, R_{x_{6}x_{2}}\}$};
      \node (d) at (9,  0)  {$\{R_{x_{3}}, R_{x_{7}x_{3}}\}$};

      \edge{1}{c};
      \edge{1}{d};
      \edge{2}{a};
      \edge{2}{c};
      \edge{2}{d};
      \edge{3}{a};
      \edge{3}{b};
      \edge{3}{d};
      \edge{4}{b};
      \edge{4}{c};
    \end{tikzpicture}
    \end{center}
    \caption{The graph $\Delta$ from Example~\ref{ex-sec-3}.}\label{fig-delta}
  \end{figure}

  \begin{figure}
    \begin{center}
    \begin{tikzpicture}
      \node (1) at (0,  3)  {$\{L_{x_{1}}\}$};
      \node (2) at (3,  3)  {$\{L_{x_{3}}\}$};
      \node (3) at (6,  3)  {$\{L_{x_{4}}\}$};
      \node (4) at (9,  3)  {$\{L_{x_{1}x_{6}}\}$};

      \node (a) at (0,  0)  {$\{R_{x_{1}}\}$};
      \node (b) at (3,  0)  {$\{R_{x_{2}}\}$};
      \node (c) at (6,  0)  {$\{R_{x_{8}x_{2}}, R_{x_{6}x_{2}}\}$};
      \node (d) at (9,  0)  {$\{R_{x_{3}}, R_{x_{7}x_{3}}\}$};

      \edge{1}{d};
      \edge{4}{d};
    \end{tikzpicture}
    \end{center}
    \caption{The graph $\Theta$ from Example~\ref{ex-sec-3}.}\label{fig-theta}
  \end{figure}
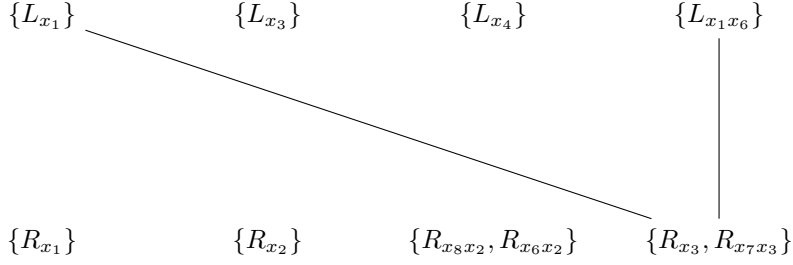
\end{ex}

The following lemma is a straightforward consequence of Green's Lemma and
Lemma~\ref{lem-d-finite}.

\begin{lem}\label{lem-path-gamma}
  \begin{enumerate}[label=\emph{(\roman*)},ref=\roman*]
    \item 
      The vertex containing $L_{b} \in J/\L$ in $\Gamma_{\L}$ is reachable from
      the vertex containing $L_{a}$ if and only if $L_{a} = L_{b}$, or there
      exists $s \in L_{a}$ and $x \in \genset{S \setminus J}$ such that $sx \in
      L_{b}$.

    \item 
      The vertex containing $R_{b} \in J/\R$ in $\Gamma_{\R}$ is reachable
      from the vertex containing $R_{a}$ if and only if $R_{a} = R_{b}$, or there
      exists $s \in R_{a}$ and $x \in \genset{S \setminus J}$ such that $xs \in
      R_{b}$.

  \end{enumerate}
\end{lem}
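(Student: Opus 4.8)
The plan is to prove (i); part (ii) is the left--right dual (replace $\L$-classes by $\R$-classes, right multiplication by left multiplication, and invoke the second halves of Green's Lemma and of Lemma~\ref{lem-d-finite}). Write $D$ for the digraph with vertex set $J/\L$ and an edge $(L_a,L_b)$ whenever $L_a\neq L_b$ and $L_a x=L_b$ for some $x\in X'$, so that $\Gamma_{\L}$ is the quotient of $D$ by its strongly connected components; here $L_a x$ is the $\L$-class of $sx$ for any $s\in L_a$, which is independent of the choice of $s$ since $\L$ is a right congruence, and which lies in $J$ exactly when $sx\in J$ (Green's Lemma and Lemma~\ref{lem-d-finite} give moreover that right multiplication by $x$ is then a bijection $L_a\to L_a x$, although only right-compatibility of $\L$ is needed below). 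The first step is the elementary graph-theoretic observation that, for vertices $u,v$ of $D$, the vertex of $\Gamma_{\L}$ containing $v$ is reachable from the vertex containing $u$ if and only if $u=v$ or there is a directed walk $u=M_0,M_1,\dots,M_p=v$ with $p\geq1$ in $D$: one direction is projection, and for the other one lifts a path in $\Gamma_{\L}$ one edge at a time, splicing consecutive lifted edges together with walks inside the relevant strongly connected components. Hence it suffices to prove that, when $L_a\neq L_b$, there is such a walk from $L_a$ to $L_b$ in $D$ if and only if $sx\in L_b$ for some $s\in L_a$ and some $x\in\genset{S\setminus J}$.

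For the forward implication I would take a walk $L_a=M_0,M_1,\dots,M_p=L_b$ in $D$, choose $x_j\in X'$ with $M_{j-1}x_j=M_j$ for each $j$ (possible since consecutive vertices of the walk are joined by edges of $D$), and set $x=x_1x_2\cdots x_p$. Then $x\in\genset{X'}\subseteq\genset{S\setminus J}$, the inclusion holding because a generator whose $\J$-class is strictly above $J$ lies in $S\setminus J$. For $s\in L_a$ one has $sx_1\in M_1\subseteq J$, then $(sx_1)x_2\in M_2\subseteq J$, and by induction $sx=sx_1\cdots x_p\in M_p=L_b$, as required.

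For the converse, suppose $s\in L_a$, $x\in\genset{S\setminus J}$, and $sx\in L_b$, with $L_a\neq L_b$. Write $x=y_1\cdots y_m$ with every $y_i\in S\setminus J$. Since $s\in J$ and $sx\in J$, the chain $J=J_s\geq J_{sy_1}\geq\cdots\geq J_{sy_1\cdots y_m}=J$ forces $sy_1\cdots y_i\in J$ for every $i$; as $sy_1\cdots y_i$ has $y_i$ as a right factor we get $J=J_{sy_1\cdots y_i}\leq J_{y_i}$, and since $y_i\notin J$ this gives $J<J_{y_i}$. Writing each $y_i$ as a product of elements of $X$, every generator occurring in it has $\J$-class $\geq J_{y_i}>J$, hence lies in $X'$; therefore $x\in\genset{X'}$, say $x=w_1\cdots w_N$ with each $w_k\in X'$. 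Applying the same $\J$-order sandwich to this longer factorisation shows every prefix $sw_1\cdots w_k$ lies in $J$, so it lies in some $\L$-class $M_k$ of $J$, with $M_0=L_a$ and $M_N=L_b$; and since $\L$ is a right congruence, $M_{k-1}w_k=M_k$ for each $k$. Thus for each $k$ either $M_{k-1}=M_k$ or $(M_{k-1},M_k)$ is an edge of $D$ (using $w_k\in X'$), and deleting the repeats produces a walk from $L_a$ to $L_b$ in $D$, which is non-trivial because $L_a\neq L_b$.

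I expect the only real obstacle to be the converse direction, specifically the step that turns a factorisation of $x$ over $S\setminus J$ into one over $X'$: the mechanism is the $\J$-order sandwich, together with the fact that, once every prefix $sy_1\cdots y_i$ is known to lie in $J$, each $y_i$ — and hence each generator dividing it — must have $\J$-class strictly above $J$. With that in hand, right-compatibility of $\L$ finishes the argument, and the passage between $\Gamma_{\L}$ and the pre-quotient digraph $D$ is routine.
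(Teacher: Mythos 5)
Your proof is correct. The paper gives no proof of this lemma, asserting only that it is ``a straightforward consequence of Green's Lemma and Lemma~\ref{lem-d-finite}'', and your argument is exactly the intended one: stability (Lemma~\ref{lem-d-finite}) to show that every prefix $sw_1\cdots w_k$ stays in $J$ and that each factor's $\J$-class lies strictly above $J$ (so the factorisation over $S\setminus J$ refines to one over $X'$), Green's Lemma to make right translation an exact bijection of $\L$-classes so that the edge condition $L_ax=L_b$ is met, and a routine lift/project between the pre-quotient digraph and $\Gamma_{\L}$.
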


By Green's Lemma [Lemma~\ref{lem-green}] and Lemma~\ref{lem-d-finite}, if $T$ is
a subsemigroup of $S$ such that $S\setminus T\subseteq J$, then $T \cap J$
contains an $\L$-class $L$ if and only if $T\cap J$ contains every $\L$-class in
every vertex of $\Gamma_{\L}$ that is reachable from the vertex of $\Gamma_{\L}$
containing $L$.  The analogous statement holds for $\R$-classes.  Furthermore,
$T\cap J$ contains an $\H$-class $H$ if and only if $T\cap J$ contains every
$\H$-class that is the intersection of an $\L$- and an $\R$-class that are
contained in vertices that are reachable from the vertices containing the $\L$-
and $\R$-classes containing $H$ in $\Gamma_{\L}$ and $\Gamma_{\R}$,
respectively.

\begin{ex}
  Let $W$ and $J$ be the semigroup and the $\J$-class, respectively, from
  Example~\ref{ex-sec-3}. Suppose that $T$ is a subsemigroup of $W$ such that $W
  \setminus T \subseteq J$.  By analysing the digraph $\Gamma_{\L}$, depicted in
  Figure~\ref{fig-gamma}, we see that if $T$ contains the $\L$-class
  $L_{x_{4}}$, then $T$ also contains the $\L$-classes $L_{x_{1}}$ and
  $L_{x_{1}x_{6}}$, since the vertices containing these $\L$-classes are
  reachable in $\Gamma_{\L}$ from the vertex containing $L_{x_{4}}$.  Likewise,
  if $T$ contains the $\R$-class $R_{x_{8}x_{2}}$, then by considering the
  digraph $\Gamma_{\R}$, depicted in Figure~\ref{fig-gamma}, we see that $T$
  also contains the $\R$-classes $R_{x_{6}x_{2}}, R_{x_{3}}$, and
  $R_{x_{7}x_{3}}$.  If we consider these digraphs together, then we see that
  $T$ contains the $\H$-class $L_{x_{1}} \cap R_{x_{3}}$ if and only if $T$ also
  contains the $\H$-classes $L_{x_{1}} \cap R_{x_{7}x_{3}}$, $L_{x_{1}x_{6}}
  \cap R_{x_{3}}$, and $L_{x_{1}x_{6}} \cap R_{x_{7}x_{3}}$. 
\end{ex}

The digraphs $\Gamma_{\L}$, $\Gamma_{\R}$, and the bipartite graphs $\Delta$
and $\Theta$ can be created using graph algorithms applied to the left and
right Cayley graphs of the semigroup $S$ with respect to its generating set
$X$.  The time complexity of finding $\Gamma_{\L}$, $\Gamma_{\R}$, $\Delta$,
and $\Theta$ using the left and right Cayley graphs of $S$ is $O(|S||X|)$.
This is the same as the time complexity of determining the left and right
Cayley graphs of $S$ using, say, the Froidure-Pin
Algorithm~\cite{Froidure1997aa}. However in practice finding $\Gamma_{\L}$,
$\Gamma_{\R}$, $\Delta$, and $\Theta$ using the Cayley graphs of $S$ will be
much faster than determining the Cayley graphs themselves.  For certain types
of semigroups, such as semigroups represented by a generating set consisting of
transformations of a finite set, the $\J$-class itself and the vertices and
edges of $\Gamma_{\L}$, $\Gamma_{\R}$, and $\Delta$ can be determined without
finding either the left or right Cayley graph of $S$; see~\cite{East2015ab} for
further details.

In what follows, we will assume that $\Gamma_{\L}$, $\Gamma_{\R}$, $\Delta$,
and $\Theta$ and the colourings of $\Gamma_{\L}$ and $\Gamma_{\R}$ are known
\textit{a priori}. 

%%%%%%%%%%%%%%%%%%%%%%%%%%%%%%%%%%%%%%%%%%%%%%%%%%%%%%%%%%%%%%%%%%%%%%%%%%%%%%%%

Suppose that $T$ is a subset of $S$ such that $S \setminus T \subseteq J$ and
suppose that there exist proper subsets $A \subsetneq J/\L$ and $B \subsetneq
J/\R$ such that $T\cap J$ is the union of the Green's classes in $A$ and in
$B$.  If $T$ is a maximal subsemigroup of $S$, then type~\eqref{item-both} is
when  $A \not= \varnothing$ and $B \not = \varnothing$; type~\eqref{item-l}
corresponds to $A \not= \varnothing$ and $B = \varnothing$; type~\eqref{item-r}
is when $A = \varnothing$ and $B \not = \varnothing$; and
type~\eqref{item-empty} is when $A = \varnothing$ and $B = \varnothing$.

Recall that the vertices of $\Gamma_{\L}$ and $\Gamma_{\R}$ are
sets of $\L$- and $\R$-classes of $J$, respectively.

\begin{prop}\label{prop-rect}
  Let $S$ be a finite semigroup, and let $T$ be a proper subset of $S$ such that
  $S\setminus T$ is contained in a regular $\J$-class $J$ of $S$.  Suppose that
  there exist proper subsets $A \subsetneq J/\L$ and $B \subsetneq J/\R$ such
  that $T\cap J$ is the union of the Green's classes in $A$ and in $B$. Then $T$
  is a subsemigroup of $S$ if and only if the following hold:

  \begin{enumerate}[label=\emph{(\roman*)},ref=\roman*]

    \item\label{item-prop-rect-1}
      $A$ is a union of vertices of $\Gamma_{\L}$, and 
      $B$ is a union of vertices of $\Gamma_{\R}$;

    \item\label{item-prop-rect-2}
      if $U$ and $V$ are vertices of $\Gamma_{\L}$ such that 
      $U$ is contained in $A$ and there is an edge from $U$ to $V$ in
      $\Gamma_{\L}$, then $V$ is contained in $A$;
    
    \item\label{item-prop-rect-3}
      if $U$ and $V$ are vertices of $\Gamma_{\R}$ such that 
      $U$ is contained in $B$ and there is an edge from $U$ to $V$ in
      $\Gamma_{\R}$, then $V$ is contained in $B$;

    \item\label{item-prop-rect-4}
      if $\{U, V\}$ is an edge in $\Theta$, then either $U$ or $V$, or both,
      is contained in $A\cup B$;
    
    \item\label{item-prop-rect-5}
      the vertices contained in $A\cup B$ form an independent set of $\Delta$.

  \end{enumerate}
\end{prop}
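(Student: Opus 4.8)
The plan is to reduce the statement, via Lemma~\ref{lem-subsemigroup}, to three closure properties of $T$, and then to match these one by one against conditions \eqref{item-prop-rect-1}--\eqref{item-prop-rect-5}. First I would record a preliminary observation. Since $T$ is a proper subset of $S$ with $S\setminus T\subseteq J$ we have $T\cap J\subsetneq J$; and if some $\L$-class $L\subseteq T$ of $J$ were not in $A$ then, as the classes in $A$ are disjoint from $L$, every element of $L$ would lie in an $\R$-class from $B$, and since $L$ meets every $\R$-class of $J$ this would force $B=J/\R$, contrary to hypothesis. Hence $A=\set{L\in J/\L}{L\subseteq T}$ and, dually, $B=\set{R\in J/\R}{R\subseteq T}$, so the decomposition is uniquely determined by $T$. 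I would also use throughout the standard consequence of the Rees Theorem and Lemma~\ref{lem-d-finite}: for $x,y\in J$ one has $xy\in J$ if and only if $L_x\cap R_y$ is a group $\H$-class, in which case $R_{xy}=R_x$ and $L_{xy}=L_y$. Write $[\,\cdot\,]$ for the vertex (strongly connected component) of $\Gamma_{\L}$ or $\Gamma_{\R}$ containing a given Green's class.

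For the forward implication, suppose $T$ is a subsemigroup; then $X'\subseteq S\setminus J\subseteq T$ gives $\genset{X'}\subseteq T$. By the observation following Lemma~\ref{lem-path-gamma}, $T\cap J$ contains an $\L$-class $L$ if and only if it contains every $\L$-class lying in a vertex of $\Gamma_{\L}$ reachable from $[L]$; combined with the preliminary observation this says precisely that $A$ is a union of vertices of $\Gamma_{\L}$ closed under out-edges, i.e.\ \eqref{item-prop-rect-1} and \eqref{item-prop-rect-2}, and \eqref{item-prop-rect-3} is dual. For \eqref{item-prop-rect-4}, every edge of $\Theta$ has the form $\{[L_z],[R_z]\}$ for some $z\in\genset{X'}\cap J$; since $z\in T$ the preliminary observation gives $L_z\in A$ or $R_z\in B$, so $[L_z]$ or $[R_z]$ is a vertex contained in $A\cup B$. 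For \eqref{item-prop-rect-5}, if it failed there would be a vertex $U\subseteq A$ of $\Gamma_{\L}$ and a vertex $V\subseteq B$ of $\Gamma_{\R}$ joined in $\Delta$, hence $L\in U$ and $R\in V$ with $L\cap R$ a group $\H$-class; then $L\cup R\subseteq T$, a short computation in Rees coordinates gives that the product set $LR$ equals $J$, and so $J\subseteq T$, contradicting that $T$ is proper.

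For the converse, assume \eqref{item-prop-rect-1}--\eqref{item-prop-rect-5} and verify \eqref{item-assumption-1}--\eqref{item-assumption-3} of Lemma~\ref{lem-subsemigroup}. For \eqref{item-assumption-1} it suffices (as $S\setminus T\subseteq J$) to show $\genset{X'}\cap J\subseteq T$: a given $z\in\genset{X'}\cap J$ yields the edge $\{[L_z],[R_z]\}$ of $\Theta$, and \eqref{item-prop-rect-4} together with \eqref{item-prop-rect-1} forces $L_z\in A$ or $R_z\in B$, whence $z\in T$. For \eqref{item-assumption-2}, let $x,y\in T\cap J$ with $xy\in J$; then $xy$ lies in the $\R$-class $R_x$ and the $\L$-class $L_y$, so $xy\in T$ unless $R_x\notin B$ and $L_y\notin A$, in which case $x\in T$ forces $L_x\in A$ and $y\in T$ forces $R_y\in B$, while $L_x\cap R_y$ is a group $\H$-class; thus the vertices $[L_x]\subseteq A$ and $[R_y]\subseteq B$ are adjacent in $\Delta$, contradicting \eqref{item-prop-rect-5}. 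For \eqref{item-assumption-3}, let $x\in T\cap J$ and $y\in\genset{X'}$ with $xy\in J$ (the case $yx\in J$ being dual): since $x\in J$, Lemma~\ref{lem-d-finite} gives $R_{xy}=R_x$, so $xy\in T$ if $R_x\in B$; otherwise $L_x\in A$, and since $y\in\genset{X'}\subseteq\genset{S\setminus J}$, Lemma~\ref{lem-path-gamma} shows $[L_{xy}]$ is reachable from $[L_x]$ in $\Gamma_{\L}$, so \eqref{item-prop-rect-1} and \eqref{item-prop-rect-2} give $L_{xy}\in A$ and hence $xy\in T$. Thus $T$ is a subsemigroup.

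The only genuinely delicate points are the preliminary observation that $(A,B)$ is forced by $T$ (without which the forward implication is false for a poorly-chosen decomposition) and, in the forward direction, the coordinate computation $LR=J$ underlying \eqref{item-prop-rect-5}; everything else is careful but routine bookkeeping with Green's relations and the definitions of $\Gamma_{\L}$, $\Gamma_{\R}$, $\Delta$, and $\Theta$.
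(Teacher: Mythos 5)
Your proof is correct, and for conditions \eqref{item-prop-rect-1}--\eqref{item-prop-rect-4} and the entire converse it follows essentially the same route as the paper: reduce to Lemma~\ref{lem-subsemigroup}, and use Lemma~\ref{lem-path-gamma}, Lemma~\ref{lem-d-finite}, and the definitions of $\Theta$ and $\Delta$. The genuine difference is in the forward direction of \eqref{item-prop-rect-5}. The paper argues structurally: since $A,B\neq\varnothing$, any maximal subsemigroup $M\supseteq T$ must be of the type in Proposition~\ref{prop-graham}(c)\eqref{item-prop-rect}, so by Proposition~\ref{prop-rms} it corresponds to a maximal independent set in the Graham--Houghton graph of $(J)\phi\cup\{0\}$ containing $A\cup B$, and independence passes to the quotient $\Delta$. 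You instead argue directly: a $\Delta$-edge between a vertex in $A$ and a vertex in $B$ yields an $\L$-class $L\subseteq T$ and an $\R$-class $R\subseteq T$ with $L\cap R$ a group $\H$-class, whence $LR=J$ in Rees coordinates and $T=S$, contradicting properness. This is more elementary and self-contained (it avoids invoking the Graham--Graham--Rhodes classification, and sidesteps the case analysis needed to see that $M$ really is of type (c)\eqref{item-prop-rect} rather than one of the other types). Your preliminary observation that $A$ and $B$ are forced to be exactly the sets of $\L$- and $\R$-classes contained in $T$ is also a worthwhile addition: the paper uses this fact tacitly when deducing \eqref{item-prop-rect-1}--\eqref{item-prop-rect-3} from the reachability observation, and your one-line argument (an $\L$-class of $J$ meets every $\R$-class, so $L\subseteq T$ with $L\notin A$ would force $B=J/\R$) closes that small gap cleanly.
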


\begin{proof}
  ($\Rightarrow$)
  As mentioned after Lemma~\ref{lem-path-gamma},  if $T\cap J$ contains an
  $\L$-class $L$, then $T \cap J$ contains every $\L$-class in every vertex of
  $\Gamma_{\L}$ that is reachable from the vertex containing $L$. An analogous
  statement holds for $\R$-classes and $\Gamma_{\R}$.
  Parts~\eqref{item-prop-rect-1},~\eqref{item-prop-rect-2},
  and~\eqref{item-prop-rect-3} follow immediately from these observations.

  If $\{U, V\}$ is an edge in $\Theta$, then by definition, there is an
  element  $x\in \genset{X'}$ in the intersection of some $\L$-class $L$ in $U$
  and some $\R$-class $R$ in $V$.  By Lemma~\ref{lem-subsemigroup}, since $T$
  is a subsemigroup, $x\in T$ and so either $L\in A$ and $U$ is contained in
  $A$; or $R\in B$ and $V$ is contained in $B$. Therefore part (iv) holds. 
 
  If $A = \varnothing$ or $B = \varnothing$, then part (v) holds immediately by
  the definition of $\Delta$, so suppose otherwise.  Since $T$ is a
  proper subsemigroup of $S$, it follows that it is contained in a maximal
  subsemigroup $M$ of $S$.   By the assumption that $T\cap J$ is a union of
  non-empty sets of $\L$- and $\R$-classes, $M$ must be of type described
  in Proposition~\ref{prop-graham}(c)\eqref{item-prop-rect}.  Hence $(M\cap
  J)\phi = (I \times G \times \Lambda) \setminus (I' \times G \times \Lambda')$
  for some non-empty sets $I'\subsetneq I$ and $\Lambda'\subsetneq \Lambda$,
  and $(M\cap J)\phi\cup \{0\}$ is a maximal subsemigroup of $(J)\phi \cup
  \{0\}$ of type~\eqref{item-rect}. We may assume without loss of generality
  that $I = J/\R$ and $\Lambda = J/\L$, and that for an $\L$-class $L$ of $J$
  and $\R$-class $R$ of $J$, $(L)\phi = I \times G \times \{L\}$ and $(R)\phi =
  \{R\} \times G \times \Lambda$.

  Proposition~\ref{prop-rms} implies that there exist
  non-empty sets $X \subsetneq I$ and $Y\subsetneq \Lambda$ such that $I' =
  I\setminus X$, $\Lambda' = \Lambda\setminus Y$, and $X\cup Y$ is a maximal
  independent set in the Graham-Houghton graph of  $(J)\phi \cup \{0\}$.
  Since $M$ contains the union of the $\L$-classes in $A$, it follows that $A
  \subseteq Y$, and similarly, $B\subseteq X$. 
  
  If $U$ and $V$ are vertices of $\Delta$ 
  contained in $A\subseteq Y$ and $B\subseteq X$, respectively, then, since
  $\Delta$ is a quotient of the Graham-Houghton graph of $(J)\phi\cup \{0\}$,
  there is no edge in $\Delta$ incident to $U$ and $V$. Thus (v) holds. 
  
  ($\Leftarrow$)
  It suffices to show that $T$ satisfies the
  conditions~\eqref{item-assumption-1},~\eqref{item-assumption-2},
  and~\eqref{item-assumption-3} of Lemma~\ref{lem-subsemigroup}.
 
  To verify that condition~\eqref{item-assumption-1} of
  Lemma~\ref{lem-subsemigroup} holds, let $x \in \genset{X'}\cap J$. Then there
  exists an edge in $\Theta$  between the vertex $U$ containing $L_x$ and the
  vertex $V$ containing $R_x$ (by the definition of $\Theta$). By
  assumption~\eqref{item-prop-rect-4} of this proposition, either $U \subseteq
  A$ or $V \subseteq B$ (or both).  In the first case, $x \in L_x \in A$ and $A
  \subseteq T$ by assumption. The other case is similar.
  
  For the second condition, suppose that $x, y \in T \cap J$ and $xy \in J$.  By
  Lemma~\ref{lem-d-finite}, $xy \in R_{x} \cap L_{y}$. It follows
  by~\cite[Proposition~2.3.7]{Howie1995aa} that the $\H$-class $L_{x} \cap
  R_{y}$ contains an idempotent, and is therefore a group.  By
  assumption~\eqref{item-prop-rect-5}, the vertices contained in $A \cup B$
  form an independent set of $\Delta$. Hence either $L_{x} \notin A$ or $R_{y}
  \notin B$.  If $R_{y} \notin B$, then, since $T \cap J$ is a union of $\L$-
  and $\R$-classes of $J$ and $y\in T\cap J$, $L_{y} \in A$.  By
  Lemma~\ref{lem-d-finite}, $L_{xy} = L_{y} \in A$, and $xy \in T$.  If $L_{x}
  \notin A$, then the proof is analogous.

  To show that the final condition of Lemma~\ref{lem-subsemigroup} holds, let
  $x \in T \cap J$ and $y \in \genset{X ^ {\prime}}$. Note that since $x \in T
  \cap J$, either $L_{x} \in A$ or $R_{x} \in B$. Suppose that $xy \in J$. If
  $L_{x} \in A$, then since the vertex containing $L_{xy} = L_{x}y$ is reachable
  in $\Gamma_{\L}$ from the vertex containing $L_{x}$, it follows that $L_{xy}
  \in A$ and so $xy\in T$. Otherwise $R_{x} \in B$, and so $R_{xy} = R_{x} \in
  B$ and $xy \in T$.  The proof that $yx \in J$ implies $yx \in T$ is similar.
\end{proof}

%%%%%%%%%%%%%%%%%%%%%%%%%%%%%%%%%%%%%%%%%%%%%%%%%%%%%%%%%%%%%%%%%%%%%%%%%%%%%%%%

\subsubsection{Maximal subsemigroups from maximal rectangles:
type~\eqref{item-both}}\label{subsec-rect}

The following corollary, a slight adaptation of Proposition~\ref{prop-rect},
gives necessary and sufficient conditions for a maximal subsemigroup of
type~\eqref{item-both} to exist.

\begin{cor}\label{cor-rect}
  Suppose that $A \neq \varnothing$ and $B \neq \varnothing$.  Then $T$ is a
  maximal subsemigroup of $S$ if and only if
  conditions~\eqref{item-prop-rect-1}--\eqref{item-prop-rect-4} of
  Proposition~\ref{prop-rect} hold, and the vertices contained in $A \cup B$
  form a maximal independent set of $\Delta$.
\end{cor}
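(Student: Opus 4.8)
The plan is to use Proposition~\ref{prop-rect} as the backbone. A maximal subsemigroup is in particular a subsemigroup, and a maximal independent set of $\Delta$ is in particular an independent set of $\Delta$; so, via Proposition~\ref{prop-rect}, both implications of the corollary reduce to the single claim that, \emph{under the standing hypothesis that $T$ is a subsemigroup of $S$ (equivalently, that conditions \eqref{item-prop-rect-1}--\eqref{item-prop-rect-5} hold), $T$ is maximal in $S$ if and only if the set of vertices of $\Delta$ contained in $A\cup B$ is a maximal independent set of $\Delta$}. The bridge between ``maximal subsemigroup of $S$'' and ``maximal independent set of $\Delta$'' will be the Graham--Houghton graph $\Gamma(R)$ of $R=\M^{0}[I,G,\Lambda;P]$ together with Propositions~\ref{prop-graham} and~\ref{prop-rms}. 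The structural fact that makes the quotient behave well is this: whenever $T$ is a subsemigroup with $S\setminus T\subseteq J$, the sets $A$ and $B$ are unions of vertices of $\Gamma_{\L}$ and $\Gamma_{\R}$ (condition \eqref{item-prop-rect-1}) that are closed under reachability in those digraphs (by the observations following Lemma~\ref{lem-path-gamma}); since $\Delta$ is (isomorphic to) the quotient of $\Gamma(R)$ obtained by collapsing each strongly connected component of $\Gamma_{\L}$ and of $\Gamma_{\R}$, such an $A\cup B$ projects to a well-defined set of vertices of $\Delta$ and conversely lifts back to a union of strongly connected components.

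For the forward implication, suppose $T$ is maximal. As in the proof of Proposition~\ref{prop-rect}, since $A$ and $B$ are non-empty and proper and $T\cap J$ is a non-empty proper union of full $\L$- and $\R$-classes, $T$ is a maximal subsemigroup of type~\eqref{item-both}; hence by Proposition~\ref{prop-graham}(c)\eqref{item-prop-rect} the set $(T\cap J)\phi\cup\{0\}$ is a maximal subsemigroup of $R$ of the form described in \eqref{item-prop-rect}, and so by Proposition~\ref{prop-rms}\eqref{item-rect} there is a maximal independent set $X\cup Y$ of $\Gamma(R)$ with $(T\cap J)\phi=(I\times G\times\Lambda)\setminus\big((I\setminus X)\times G\times(\Lambda\setminus Y)\big)$. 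Working in the identifications $I=J/\R$ and $\Lambda=J/\L$ from the proof of Proposition~\ref{prop-rect}, I would compare this formula with the description of $T\cap J$ as the union of the $\L$-classes in $A$ and the $\R$-classes in $B$; since $A$ and $B$ are proper, no column (resp.\ row) of the Rees matrix is entirely contained in $(T\cap J)\phi$ unless its $\L$-class lies in $A$ (resp.\ its $\R$-class lies in $B$), and a short coordinate computation then yields the exact identities $A=Y$ and $B=X$. In particular $A\cup B$ equals the maximal independent set $X\cup Y$ of $\Gamma(R)$. It remains to see that collapsing to $\Delta$ preserves maximality: given a vertex $W$ of $\Delta$ not contained in $A\cup B$ --- say $W$ is a strongly connected component of $\L$-classes --- pick $L\in W$; then $L\notin X\cup Y$, so by maximality of $X\cup Y$ there is an edge $\{L,R\}$ of $\Gamma(R)$ with $R\in X\cup Y$, and since $\Gamma(R)$ is bipartite $R$ is an $\R$-class, whence $R\in X=B$; the group $\H$-class $L\cap R$ then gives an edge of $\Delta$ joining $W$ to the (distinct) vertex of $\Delta$ that contains $R$, and that vertex is contained in $A\cup B$. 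So the set of vertices of $\Delta$ contained in $A\cup B$ admits no independent enlargement, i.e.\ it is a maximal independent set of $\Delta$.

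For the converse, suppose conditions~\eqref{item-prop-rect-1}--\eqref{item-prop-rect-4} hold and the set of vertices of $\Delta$ contained in $A\cup B$ is a maximal independent set of $\Delta$. Since such a set is in particular independent, condition~\eqref{item-prop-rect-5} also holds, so Proposition~\ref{prop-rect} gives that $T$ is a subsemigroup of $S$; as $T$ is proper it lies in some maximal subsemigroup $M$ of $S$, and it suffices to prove $M=T$. Now $S\setminus M\subseteq S\setminus T\subseteq J$ and $M\cap J\supseteq T\cap J$ still contains full $\L$- and $\R$-classes with $A,B\neq\varnothing$, so the same reasoning as above shows $M$ is of type~\eqref{item-both}, yielding a maximal independent set $X'\cup Y'$ of $\Gamma(R)$ for which the $\L$-classes lying entirely in $M$ are exactly those in $Y'$ and the $\R$-classes lying entirely in $M$ are exactly those in $X'$; in particular $A\subseteq Y'$ and $B\subseteq X'$. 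Because $M$ is a subsemigroup, $Y'$ and $X'$ are closed under reachability in $\Gamma_{\L}$ and $\Gamma_{\R}$, hence unions of strongly connected components, so $X'\cup Y'$ projects to an independent set of $\Delta$ containing the set of vertices contained in $A\cup B$; maximality of the latter forces the two to coincide, and since $A\cup B$ and $X'\cup Y'$ are both unions of strongly connected components we conclude $A\cup B=X'\cup Y'$. Therefore $T\cap J=M\cap J$, and since $S\setminus J\subseteq T\subseteq M$ this gives $T=M$.

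I expect the crux to be the bookkeeping in passing between $\Gamma(R)$ and its quotient $\Delta$: one has to be sure the relevant independent sets really are unions of strongly connected components of $\Gamma_{\L}$ and $\Gamma_{\R}$ (this is where condition~\eqref{item-prop-rect-1} and the reachability closure derived from Lemma~\ref{lem-path-gamma} are used), and one has to upgrade the inclusions $A\subseteq Y$, $B\subseteq X$ from the proof of Proposition~\ref{prop-rect} to the exact equalities $A=Y$, $B=X$ --- which is precisely what pins $A\cup B$ down as \emph{the} maximal independent set of $\Gamma(R)$, and which relies essentially on $A$ and $B$ being proper.
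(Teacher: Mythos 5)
Your proof is correct and follows essentially the same route as the paper: both directions reduce, via Proposition~\ref{prop-rect} together with Proposition~\ref{prop-graham}(c)\eqref{item-prop-rect} and Proposition~\ref{prop-rms}\eqref{item-rect}, to comparing the maximal independent set $X\cup Y$ of the Graham--Houghton graph with the vertex set of $A\cup B$ in the quotient $\Delta$. You simply make explicit two steps the paper leaves implicit --- upgrading $A\subseteq Y$, $B\subseteq X$ to equalities when $T=M$, and verifying that maximality of an independent set descends to the quotient $\Delta$ --- both of which you handle correctly.
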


\begin{proof}
  ($\Rightarrow$)
  Since $T$ is itself the only maximal subsemigroup of $S$ containing $T$, then,
  as described in the proof of Proposition~\ref{prop-rect} (forward
  implication), the set $A\cup B$ is a maximal independent set in
  the Graham-Houghton graph of $(J)\phi \cup \{0\}$.  Since $\Delta$ is a
  quotient of the Graham-Houghton graph, this implies the maximality of the
  independent set in $\Delta$.

  ($\Leftarrow$)
  Let $M$ be a maximal subsemigroup of $S$ containing $T$.  By the assumption
  that $T\cap J$ is a union of non-empty sets of $\L$- and $\R$-classes, $M$
  must be of the type described in
  Proposition~\ref{prop-graham}(c)\eqref{item-prop-rect}. By
  Proposition~\ref{prop-rect}, $M$ corresponds to an independent set of
  $\Delta$ that contains the vertices contained in $A \cup B$.  Since this
  latter is a maximal independent set in $\Delta$, these sets are equal.  Since
  the subsemigroups $T$ and $M$ are determined by their corresponding
  independent sets, it follows that $T = M$.
\end{proof}

We describe, in the following steps, how to use Corollary~\ref{cor-rect} to
compute the maximal subsemigroups of type~\eqref{item-both} corresponding to a
given regular $\J$-class. 

The first step is to determine the maximal independent sets in the bipartite
graph $\Delta$, or equivalently to find the maximal cliques in the complement
of $\Delta$.  The Bron-Kerbosch Algorithm~\cite{Bron1973} (implemented in~the
{\sc GAP}~\cite{GAP4} package {\sc Digraphs}~\cite{DeBeule2017aa}) is a recursive
algorithm for finding maximal cliques in a graph. Roughly speaking, this
algorithm proceeds by attempting to extend a given clique by another vertex. By
Proposition~\ref{prop-rect}\eqref{item-prop-rect-2}
and~\eqref{item-prop-rect-3}, in the search for maximal cliques in the
complement of $\Delta$, we are only interested in those cliques $K$ with
following property: if $U$ is a vertex in $K$ and $V$ is a vertex of $\Delta$,
such that there is an edge in $\Gamma_{\R}$ or $\Gamma_{\L}$ from $U$ to $V$,
then $V$ is in $K$ also.  As such, the search tree in the Bron-Kerbosch
Algorithm can be pruned to exclude any branch starting at a clique containing a
vertex $U$ for which there is a vertex $V$ in $\Gamma_{\R}$, or $\Gamma_{\L}$,
which is reachable from $U$ but does not extend the given clique, or where we
have already discovered every maximal clique containing $V$. 

We thereby produce a collection of maximal independents sets in $\Delta$, each
of which gives rise to sets of vertices $A$ and $B$ that satisfy
Proposition~\ref{prop-rect}\eqref{item-prop-rect-1},~\eqref{item-prop-rect-2},
and~\eqref{item-prop-rect-3}.
The second step is then to check which of these sets $A$ and $B$ satisfy
part~\eqref{item-prop-rect-4} of Proposition~\ref{prop-rect}, which is routine.
Given sets $A$ and $B$ satisfying all the conditions in
Proposition~\ref{prop-rect}, the final step is to specify a generating set for
the maximal subsemigroup $T$; see Proposition~\ref{prop-genset-rect}.

\begin{ex}\label{ex-rect}
  Let $W$ and $J$ be the semigroup and the $\J$-class, respectively, from
  Example~\ref{ex-sec-3}. Consider the graph $\Delta$, which is shown in
  Figure~\ref{fig-delta}. 
  
  There are $7$ maximal independent subsets of $\Delta$ in total: two
  correspond to the vertices of $\Gamma_{\L}$ and $\Gamma_{\R}$; three
  further correspond to sets $A$ and $B$ that do not satisfy either of
  Proposition~\ref{prop-rect}\eqref{item-prop-rect-2}
  or~\eqref{item-prop-rect-3}; the remaining two maximal independent subsets
  of $\Delta$ correspond to non-empty sets $A$ and $B$ satisfying
  Proposition~\ref{prop-rect}\eqref{item-prop-rect-1},~\eqref{item-prop-rect-2},
  and~\eqref{item-prop-rect-3}.

  The first such subset is $\{ \{ L_{x_{1}} \}, \{
  L_{x_{1}x_{6}} \}, \{ R_{x_{1}} \} \}$, which corresponds to the sets $A_{1} =
  \{ L_{x_{1}}, L_{x_{1}x_{6}} \} \subseteq J / \L$ and $B_{1} = \{ R_{x_{1}} \}
  \subseteq J / \R$; the second such subset corresponds to the sets $A_{2} = \{
  L_{x_{1}x_{6}} \}$ and $B_{2} = \{ R_{x_{1}}, R_{x_{3}}, R_{x_{7}x_{3}} \}$.

  There are two edges in the graph $\Theta$, as shown in Figure~\ref{fig-theta}:
  $\{ \{ L_{x_{1}} \}, \{ R_{x_{3}}, R_{x_{7}x_{3}} \} \}$, and $\{ \{
  L_{x_{1}x_{6}} \}, \{ R_{x_{3}}, R_{x_{7}x_{3}} \} \}$.  For the first edge,
  $\{L_{x_{1}}\} \subseteq A_{1}$ and $\{R_{x_{3}}, R_{x_{7}x_{3}}\} \subseteq
  B_{2}$; for the second edge, $\{L_{x_{1}x_{6}} \} \subseteq A_{1}$ and
  $\{R_{x_{3}}, R_{x_{7}x_{3}}\} \subseteq B_{2}$.  In other words, the sets
  $A_{1}$ and $B_{1}$, and the sets $A_{2}$ and $B_{2}$, satisfy
  Proposition~\ref{prop-rect}\eqref{item-prop-rect-4}.

  Therefore there are two maximal subsemigroups of $W$ of type~\eqref{item-both}
  arising from $J$: the set consisting of $W \setminus J$ along with the union
  of the $\L$-classes in $A_{1}$ and the union of the $\R$-classes in $B_{1}$,
  and the set consisting of $W \setminus J$ along with the union of the
  $\L$-classes in $A_{2}$ and the union of the $\R$-classes in $B_{2}$.
\end{ex}

\begin{prop}\label{prop-genset-rect}
  Let $T$ be a maximal subsemigroup of a finite semigroup $S$ such that
  $S\setminus T$ is contained in a regular $\J$-class of $J$ of $S$.  Suppose
  that there exist non-empty proper subsets $A \subsetneq J/\L$ and $B
  \subsetneq J/\R$ such that $T\cap J$ is the union of the sets in $A$ and in
  $B$. Then $T$ is generated by any set consisting of:
  \begin{enumerate}[label=\emph{(\roman*)},ref=\roman*]
    \item
      $X \setminus J$;

    \item
      a generating set for the ideal of $S$ consisting of those $\J$-classes
      that lie strictly below $J$ in the $\J$-class partial order;

    \item 
      a generating set for a single group $\H$-class $H_x$ in an $\L$-class
      belonging to $A$ for some $x \in T\cap J$;

    \item 
      for every source vertex $U$ in the induced subdigraph of $\Gamma_{\L}$ on
      $A$, one element $y$ such that $y\R x$ and where $L_y$ belongs to $U$;
    
    \item 
      for every source vertex $V$ in the induced subdigraph of $\Gamma_{\R}$ on
      the complement of $B$, one element $z$ such that $z\L x$ and where $R_z$
      belongs to $V$;

    \item 
      a generating set for a single group $\H$-class $H_{x'}$ in an $\R$-class
      belonging to $B$ for some $x' \in T\cap J$;

    \item 
      for every source vertex $U'$ in the induced subdigraph of $\Gamma_{\R}$ on
      $B$, one element $y'$ such that $y'\L x'$ and where $R_{y'}$ belongs to
      $U'$;

    \item 
      for every source vertex $V'$ in the induced subdigraph of $\Gamma_{\L}$ on
      the complement of $A$, one element $z'$ such that $z'\R x'$ and where
      $L_{z'}$ belongs to $V'$;
    
    \item
      for every source vertex $U$ of $\Gamma_{\L}$ contained in 
      $A$, one element $t$ such that $R_t \in B$ and $L_t \in U$. 
  \end{enumerate}
\end{prop}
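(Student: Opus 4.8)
The plan is to write $T$ as the disjoint union $(S\setminus J)\cup(T\cap J)$ --- legitimate since $S\setminus T\subseteq J$ --- and, writing $Y$ for a set built as in (i)--(ix) and $N=\genset{Y}$, to establish the three containments $N\subseteq T$, $S\setminus J\subseteq N$ and $T\cap J\subseteq N$; together these give $N=T$. The first is routine: every element listed in (i)--(ix) lies in $T$. Those in (i) and (ii) lie in $S\setminus J\subseteq T$; those in (iii), (iv) and (ix) lie in an $\L$-class belonging to $A$ or an $\R$-class belonging to $B$, hence in $T\cap J$; and so do those in (v)--(viii), since, for example, $z\,\L\,x$ forces $L_{z}=L_{x}\in A$ and $z'\,\R\,x'$ forces $R_{z'}=R_{x'}\in B$. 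For the second containment, given $s\in S\setminus J$ fix a factorisation of $s$ over $X$: if it uses a factor $x$ from $J$ then $J_{s}<J$ in the $\J$-order (as $s\in S^{1}xS^{1}$ and $s\notin J$), so $s$ belongs to the ideal of $\J$-classes strictly below $J$ and hence to $N$ by (ii); otherwise $s\in\genset{X\setminus J}\subseteq N$ by (i). In particular $\genset{X'}\subseteq N$, which will be used repeatedly.

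The substance is the third containment, $T\cap J\subseteq N$. Identify $J^{*}$ with $\M^{0}[I,G,\Lambda;P]$ via $\phi$, so that the $\R$-classes of $J$ are indexed by $I$ and the $\L$-classes by $\Lambda$, a group $\H$-class occupies position $(i,\lambda)$ exactly when $p_{\lambda,i}\ne 0$, and, identifying $A$ and $B$ with the corresponding subsets of $\Lambda$ and $I$, $T\cap J=\set{(i,g,\lambda)}{i\in B\ \text{or}\ \lambda\in A}$. I would use two consequences of Green's Lemma and Lemma~\ref{lem-d-finite}. (T1): if $L_{a},L_{b}$ are $\L$-classes of $J$ and $w\in S^{1}$ with $L_{a}w=L_{b}$, then right translation by $w$ restricts to an $\R$-class-preserving bijection $L_{a}\to L_{b}$, and dually for left translations and $\R$-classes --- this is exactly the observation recorded after Lemma~\ref{lem-path-gamma}. (T2): if $H$ is a group $\H$-class with $x\in H$ and $a\,\R\,x$, then $Ha=\R_{x}\cap L_{a}$, a full $\H$-class; dually $aH=\R_{a}\cap L_{x}$ when $a\,\L\,x$ --- here the relevant translation is injective on $H$ because right translation by $x$ is a bijection of the group $H$, and all $\H$-classes of $J$ have the same cardinality.

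With these tools the idea is to ``spread'' the group $\H$-class $H_{x}$ of (iii), which sits at a position $(i_{0},\lambda_{0})$ with $\lambda_{0}\in A$. By (iv) and (T2), $N$ contains the full $\H$-class $\R_{i_{0}}\cap L$ for each $\L$-class $L$ in a source vertex of the subdigraph $\Gamma_{\L}|_{A}$ of $\Gamma_{\L}$ induced on $A$; propagating along the edges of $\Gamma_{\L}$ (and, inside a strongly connected component, along the corresponding edges of the digraph on $J/\L$) using (T1) and $\genset{X'}\subseteq N$ gives $\R_{i_{0}}\cap L\subseteq N$ for every $\L$-class $L$ in $A$, since every vertex of $A$ is reachable from a source in $\Gamma_{\L}|_{A}$. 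Dually, (v), (T2) and propagation along $\Gamma_{\R}$ --- using that $B$ is closed under taking successors in $\Gamma_{\R}$, Proposition~\ref{prop-rect}\eqref{item-prop-rect-3} --- give $\R_{i}\cap L_{\lambda_{0}}\subseteq N$ for every $i\notin B$; and multiplying a full $\H$-class at $(i,\lambda_{0})$ by one at $(i_{0},\lambda)$, legitimate as $p_{\lambda_{0},i_{0}}\ne 0$, puts every $(i,g,\lambda)$ with $\lambda\in A$ and $i\notin B$ (and also $i=i_{0}$) into $N$. The symmetric construction based on (vi), (vii), (viii) and a group $\H$-class at a position $(i_{0}',\lambda_{0}')$ with $i_{0}'\in B$ places every $(i,g,\lambda)$ with $i\in B$ and $\lambda\notin A$ into $N$; moreover, propagating the row $\R_{i_{0}'}$ along $\Gamma_{\L}$ from the source vertices of the subdigraph of $\Gamma_{\L}$ induced on the complement of $A$, and then multiplying by the full $\H$-classes in column $\lambda_{0}'$, yields $\R_{i}\cap L\subseteq N$ for every $i\in B$ and every $\L$-class $L$ whose vertex is reachable in $\Gamma_{\L}$ from such a source.

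What remains, and what I expect to be the main obstacle, is the ``overlap'' region $\set{(i,g,\lambda)}{i\in B,\ \lambda\in A}$: by Proposition~\ref{prop-rect}\eqref{item-prop-rect-5} (independence of $A\cup B$ in $\Delta$) it contains no group $\H$-class, so it cannot be produced by multiplying together elements of the parts obtained so far, and this is precisely the role of clause (ix) together with the regularity of $R$. Given a source vertex $U$ of $\Gamma_{\L}$ contained in $A$ and the associated element $t$ at a position $(i_{t},\lambda_{t})$ with $i_{t}\in B$ and the vertex of $\lambda_{t}$ equal to $U$, choose a nonzero entry $p_{\mu_{t},i_{t}}$ of $P$ (regularity). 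Then $\mu_{t}\notin A$, since otherwise $(i_{t},\mu_{t})$ would be a group $\H$-class with $i_{t}\in B$ and $\mu_{t}\in A$, contradicting independence; and no source of $\Gamma_{\L}$ with a path to the vertex of $\mu_{t}$ lies in $A$ (as $A$ is closed under taking successors, Proposition~\ref{prop-rect}\eqref{item-prop-rect-2}, while $\mu_{t}\notin A$), so the previous paragraph gives $\R_{i}\cap L_{\mu_{t}}\subseteq N$ for every $i\in B$. Multiplying by $t$, legitimate since $p_{\mu_{t},i_{t}}\ne 0$, gives $\R_{i}\cap L_{\lambda_{t}}\subseteq N$ for every $i\in B$; with the rows $i\notin B$ obtained earlier this is the whole $\L$-class $L_{\lambda_{t}}\subseteq N$, and propagating full $\L$-classes along $\Gamma_{\L}$ puts $L\subseteq N$ for every $\L$-class $L$ reachable from a source of $\Gamma_{\L}$ contained in $A$. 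A final combinatorial check --- classifying each $\L$-class $L$ of $A$ according to whether some source of $\Gamma_{\L}$ with a path to the vertex of $L$ lies in $A$ (handled by the (ix)-step) or in the complement of $A$ (handled by the symmetric (viii)-step, combined with the earlier rows) --- shows that every $(i,g,\lambda)$ with $i\in B$ or $\lambda\in A$ lies in $N$. Hence $T\cap J\subseteq N$, and with the first two containments $N=T$.
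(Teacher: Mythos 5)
Your proof is correct and follows essentially the same route as the paper's: after the easy containments, both arguments anchor on the group $\H$-classes from (iii) and (vi), translate along $\Gamma_{\L}$ and $\Gamma_{\R}$ via Green's Lemma and elements of $\genset{X'}$, and handle the overlap region $\set{s\in J}{L_s\in A,\ R_s\in B}$ using (ix) together with regularity of $J$ and the independence of $A\cup B$ in $\Delta$. The only difference is presentational: you build up full $\H$-classes region by region in Rees-matrix coordinates, whereas the paper produces each element $a$ individually as a sandwich product $sH_xr$ or $sH_er$.
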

\begin{proof}
  Let $Y$ be a set of the kind described in the proposition.
  Clearly every element of $Y$ is contained in $T$, and so $\genset{Y}$ is a
  subsemigroup of $T$. Furthermore, the inclusion of the generators in (i) and
  (ii) implies that $\genset{Y}$ contains $S \setminus J$.  To show that $T \leq
  \genset{Y}$, let $a \in T \cap J$.  By the definition of $T$, either $L_{a}
  \in A$ or $R_{a} \in B$.
  
  First suppose that $L_{a} \in A$ and $R_{a} \notin B$.  The vertex of
  $\Gamma_{\L}$ containing $L_{a}$ is reachable from some vertex $U$ that is a
  source of the induced subdigraph of $\Gamma_{\L}$ on $A$. Hence there exists
  an element $y \in Y$ of type (iv) such that $L_{y} \in U$ and $y \R x$, where
  $x$ is an element in the group $\H$-class from part (iii).  By
  Lemma~\ref{lem-path-gamma}, either $L_{a} = L_{y}$, or there exists $r' \in S
  \setminus J \subseteq \genset{Y}$ such that $yr' \in L_{a}$ and,
  by Lemma~\ref{lem-d-finite}, $yr' \in R_{y} = R_{x}$. In either
  case, there exists an element $r \in \genset{Y}$ such that $r \R x$ and $r \L
  a$.  Likewise, by using a generator of type (v), there exists an element $s
  \in \genset{Y}$ such that $s \L x$ and $s \R a$.  Since $H_{x}$ is a group, it
  follows by Green's Lemma that $a \in H_{a} = sH_{x}r \subseteq \genset{Y}$.
  If instead $L_{a} \notin A$ and $R_{a} \in B$, then the proof that $a \in
  \genset{Y}$ is similar.

  For the final case, suppose that $L_{a} \in A$ and $R_{a} \in B$. The vertex
  of $\Gamma_{\L}$ containing $L_{a}$ is reachable from some vertex $U$ that is
  a source of $\Gamma_{\L}$.  If $U \subseteq A$ then by (ix) there exists an
  element $t \in Y$ such that $R_{t} \in B$ and $L_{t} \in U$. If $U
  \not\subseteq A$ then, by the previous paragraph, every element $t'$
  such that $R_{t'} \in B$ and $L_{t'} \notin A$ is contained in $\genset{Y}$.
  In either case, there exists an element $t \in \genset{Y}$ such that
  $R_{t}\in B$ and $L_t\in U$.
  Therefore, since $L_{a}$ is reachable from $L_{t}$ in $\Gamma_{\L}$, there
  exists an element $r \in \genset{Y}$ such that $r \in L_{a} \cap R_{t}$.  By
  the regularity of $J$, there exists an idempotent $e$ such that $e \R r$, and
  since $A \cup B$ corresponds to an independent set in $\Delta$, $L_{e}
  \not\in A$. By the arguments of the above paragraph, the $\H$-classes $H_{e}$
  and $H_{s} = L_{e} \cap R_{a}$ are both contained in $\genset{Y}$. Since
  $H_{e}$ is a group, we have $a \in H_{a} = sH_{e}r \subseteq \genset{Y}$.
  
  Note that if there exists an idempotent $e \in S$ such that $L_{e} \not\in A$
  and $R_{e} \not\in B$ (such an idempotent exists if and only if the
  complement of $A \cup B$ corresponds to a non-independent set in $\Delta$),
  then $a \in H_{a} = (L_{a} \cap R_{e})(R_{a} \cap L_{e})$. Hence if such an
  idempotent $e$ exists, the generators in (ix) are redundant.
\end{proof}

%%%%%%%%%%%%%%%%%%%%%%%%%%%%%%%%%%%%%%%%%%%%%%%%%%%%%%%%%%%%%%%%%%%%%%%%%%%%%%%%

\subsubsection{Maximal subsemigroups from removing either $\R$-classes or
$\L$-classes: types~\eqref{item-l} and~\eqref{item-r}}\label{subsec-row-col}

If the set of $\R$-classes $B$ is empty, then the criteria for $T$ to be a
subsemigroup of $S$ can be simplified from those in Proposition~\ref{prop-rect}.
In particular, the second part of condition~\eqref{item-prop-rect-1} and
condition~\eqref{item-prop-rect-3} are vacuously satisfied.
Condition~\eqref{item-prop-rect-5} of Proposition~\ref{prop-rect} is also
automatically satisfied.  Furthermore, the reference to the set $B$ in
condition~\eqref{item-prop-rect-4} can be removed, to obtain the following
immediate corollary.

\begin{cor}\label{cor-remove-l}
  Suppose that $A \neq \varnothing$ and $B = \varnothing$. Then $T$ is a
  (proper) subsemigroup of $S$ if and only if
  conditions~\eqref{item-prop-rect-1} and~\eqref{item-prop-rect-2} of
  Proposition~\ref{prop-rect} hold, and every vertex $V$ of $\Gamma_{\L}$ with
  $c(V) = 1$ is contained in $A$.
\end{cor}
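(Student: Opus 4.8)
The plan is to obtain Corollary~\ref{cor-remove-l} directly from Proposition~\ref{prop-rect} by specialising to $B = \varnothing$ and simplifying each of the five conditions~\eqref{item-prop-rect-1}--\eqref{item-prop-rect-5} in turn. First I would record that the standing hypothesis $A \subsetneq J/\L$, together with $A \neq \varnothing$ and $B = \varnothing \subsetneq J/\R$, forces $T \cap J$ to be a proper non-empty subset of $J$, so that $T$ is automatically a proper subset of $S$; the parenthetical ``proper'' in the statement therefore adds nothing, and it suffices to characterise when $T$ is a subsemigroup, which is exactly what Proposition~\ref{prop-rect} does.

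Next I would dispose of the conditions that trivialise. With $B = \varnothing$, the second clause of~\eqref{item-prop-rect-1} (``$B$ is a union of vertices of $\Gamma_{\R}$'') holds since $\varnothing$ is the empty union, and~\eqref{item-prop-rect-3} holds vacuously since no vertex is contained in $B$. Condition~\eqref{item-prop-rect-5} also becomes automatic: here $A \cup B = A$ is, by~\eqref{item-prop-rect-1}, a union of vertices of $\Gamma_{\L}$, and since $\Delta$ is bipartite with the vertices of $\Gamma_{\L}$ and of $\Gamma_{\R}$ forming its two sides, no edge of $\Delta$ joins two vertices of $\Gamma_{\L}$; hence the set of vertices contained in $A$ is always independent in $\Delta$. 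So only the first clause of~\eqref{item-prop-rect-1}, condition~\eqref{item-prop-rect-2}, and condition~\eqref{item-prop-rect-4} can impose any restriction.

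The only step requiring genuine (though still modest) care is reducing~\eqref{item-prop-rect-4} to the colouring statement, and this is where I expect the crux to lie. Since $\Theta$ is bipartite with the same bipartition as $\Delta$, every edge $\{U, V\}$ of $\Theta$ has one endpoint, say $U$, a vertex of $\Gamma_{\L}$ and the other, $V$, a vertex of $\Gamma_{\R}$; because the vertex sets of $\Gamma_{\L}$ and $\Gamma_{\R}$ are disjoint, $V$ cannot be contained in $A \cup B = A$. Hence~\eqref{item-prop-rect-4} is equivalent to saying that the $\Gamma_{\L}$-endpoint of every edge of $\Theta$ is contained in $A$, and, using the fact recalled before Proposition~\ref{prop-rect} that a vertex $V$ of $\Gamma_{\L}$ satisfies $c(V) = 1$ precisely when it is incident to an edge of $\Theta$, this is in turn equivalent to: every vertex $V$ of $\Gamma_{\L}$ with $c(V) = 1$ is contained in $A$. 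Assembling these observations — \eqref{item-prop-rect-1} (first clause) and~\eqref{item-prop-rect-2} survive unchanged, \eqref{item-prop-rect-3} and~\eqref{item-prop-rect-5} are free, and~\eqref{item-prop-rect-4} becomes the colouring condition — yields the claimed equivalence. The whole argument is essentially bookkeeping; the only subtlety is the translation of~\eqref{item-prop-rect-4}, which rests on the bipartiteness of $\Theta$ and on the description of the colouring $c$ in terms of incidence with edges of $\Theta$.
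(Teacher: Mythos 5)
Your proposal is correct and follows essentially the same route as the paper, which derives Corollary~\ref{cor-remove-l} from Proposition~\ref{prop-rect} by observing that with $B = \varnothing$ the second part of~\eqref{item-prop-rect-1} and condition~\eqref{item-prop-rect-3} are vacuous, condition~\eqref{item-prop-rect-5} is automatic, and condition~\eqref{item-prop-rect-4} reduces to the colouring condition. You simply spell out the two justifications the paper leaves implicit (bipartiteness of $\Delta$ and $\Theta$, and the stated equivalence between $c(V)=1$ and incidence with an edge of $\Theta$), which is fine.
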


Thus we may give necessary and sufficient conditions for a
maximal subsemigroup of type~\eqref{item-l} to exist.

\begin{cor}\label{cor-l}
  Suppose that $A \neq \varnothing$ and $B = \varnothing$. Then $T$ is a maximal
  subsemigroup of $S$ if and only if the complement of $A$ is a source of
  $\Gamma_{\L}$ with colour $0$, and there is no maximal subsemigroup of $S$
  of type~\eqref{item-both} whose subset of $J / \L$ is equal to $A$.
\end{cor}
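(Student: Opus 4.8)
The plan is to prove both directions using Corollary~\ref{cor-remove-l}, which tells us exactly when $T$ is a proper subsemigroup in the case $A\neq\varnothing$, $B=\varnothing$, together with Proposition~\ref{prop-graham}, which pins down the shape of any maximal subsemigroup that could contain $T$. I would first isolate a common ingredient. If $M$ is a maximal subsemigroup of $S$ with $T\subseteq M$, then $S\setminus M\subseteq S\setminus T\subseteq J$, so $M$ is of one of the types~\eqref{item-every}--\eqref{item-empty} relative to the (regular) $\J$-class $J$. Since $A\neq\varnothing$, the set $T\cap J$ is a non-empty union of $\L$-classes of $J$, so $(T\cap J)\phi$, and hence $(M\cap J)\phi$, contains a whole $\H$-class of $R$. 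I would use Lemma~\ref{lem-super} to exclude type~\eqref{item-every}, the fact that $T\cap J\neq\varnothing$ to exclude type~\eqref{item-empty}, and the fact that every $\L$-class of $J$ meets every $\R$-class of $J$ --- so that a non-empty union of $\L$-classes cannot be contained in a proper union of $\R$-classes --- to exclude type~\eqref{item-r}. For each of the two remaining types~\eqref{item-both} and~\eqref{item-l}, let $A_M\subseteq J/\L$ denote the set of $\L$-classes of $J$ that are fully contained in $M\cap J$; this is a proper subset of $J/\L$ (using Proposition~\ref{prop-graham}(c) for type~\eqref{item-both}, and trivially for type~\eqref{item-l} since $M\cap J\subsetneq J$), and since each $\L$-class in $A$ lies in $T\cap J\subseteq M\cap J$ we have $A\subseteq A_M\subsetneq J/\L$.

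For the forward implication, suppose $T$ is maximal; in particular $T$ is a proper subsemigroup, so by Corollary~\ref{cor-remove-l} the set $A$ is a union of vertices of $\Gamma_{\L}$, is closed under the edge relation of $\Gamma_{\L}$, and contains every vertex of colour $1$. Equivalently, the complement $\overline{A}:=(J/\L)\setminus A$ is non-empty (as $A$ is proper), contains only vertices of colour $0$, and is closed under predecessors, i.e.\ if $V\in\overline{A}$ and there is an edge from $U$ to $V$ then $U\in\overline{A}$. If $\overline{A}$ had two or more vertices, I would take a vertex $m\in\overline{A}$ with no edge of $\Gamma_{\L}$ leading from $m$ to another vertex of $\overline{A}$ (a sink of the subdigraph induced by $\overline{A}$, which exists as $\Gamma_{\L}$ is acyclic); then $A\cup\{m\}$ is again a union of vertices closed under the edge relation and containing every colour-$1$ vertex, and is still a proper subset of $J/\L$ (its complement $\overline{A}\setminus\{m\}$ is non-empty), so Corollary~\ref{cor-remove-l} would produce a proper subsemigroup strictly between $T$ and $S$, contradicting maximality. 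Hence $\overline{A}=\{V^*\}$ is a single vertex; it is a source of $\Gamma_{\L}$, since an incoming edge would place a second vertex into $\overline{A}$ by closure under predecessors, and it has colour $0$. Finally, a maximal subsemigroup of type~\eqref{item-both} whose subset of $J/\L$ equals $A$ would properly contain $T$ (its non-empty set of $\R$-classes supplies elements of $J$ outside $\bigcup_{L\in A}L=T\cap J$), again contradicting maximality of $T$, so no such subsemigroup exists.

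For the converse, suppose $\overline{A}=\{V^*\}$ with $V^*$ a source of colour $0$, and that no maximal subsemigroup of type~\eqref{item-both} has subset of $J/\L$ equal to $A$. Since $V^*$ is a source of colour $0$, the set $A=(J/\L)\setminus\{V^*\}$ is a union of vertices closed under the edge relation and contains every colour-$1$ vertex, so Corollary~\ref{cor-remove-l} shows $T$ is a proper subsemigroup. Let $M$ be any maximal subsemigroup of $S$ with $T\subseteq M$; such an $M$ exists since $T$ is a proper subsemigroup. By the common ingredient, $M$ is of type~\eqref{item-both} or~\eqref{item-l} with $A\subseteq A_M\subsetneq J/\L$; as $\overline{A}=\{V^*\}$ this forces $A_M=A$. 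Type~\eqref{item-both} is ruled out by hypothesis, so $M$ has type~\eqref{item-l}, whence $M\cap J=\bigcup_{L\in A}L=T\cap J$ and therefore $M=T$. Thus the only maximal subsemigroup of $S$ containing $T$ is $T$ itself, so $T$ is maximal.

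The step I expect to be the real work is the common ingredient: using Proposition~\ref{prop-graham} and Lemma~\ref{lem-super} to rule out types~\eqref{item-every},~\eqref{item-r}, and~\eqref{item-empty} for a maximal subsemigroup containing $T$, and to reduce the two surviving types to a single set $A_M\supseteq A$ of $\L$-classes. Once that is established, the remainder is routine manipulation of the conditions in Corollary~\ref{cor-remove-l} and elementary reasoning about the acyclic digraph $\Gamma_{\L}$.
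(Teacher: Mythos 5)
Your proof is correct and follows essentially the same route as the paper: the forward direction enlarges $A$ by a sink of the subdigraph induced on its complement to force that complement to be a single colour-$0$ source, and the converse shows any maximal subsemigroup containing $T$ must be of type~\eqref{item-both} or~\eqref{item-l} with the same set of $\L$-classes. Your ``common ingredient'' merely makes explicit (via Lemma~\ref{lem-super} and the fact that every $\L$-class of $J$ meets every $\R$-class) a case elimination that the paper asserts in one sentence.
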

\begin{proof}
  ($\Rightarrow$)
  Since $\Gamma_{\L}$ is acyclic, it follows that any induced subdigraph of
  $\Gamma_{\L}$ is acyclic. Therefore there is a sink in the induced subdigraph
  of $\Gamma_{\L}$ on the vertices not contained in $A$. Note that this vertex
  is not necessarily a sink in $\Gamma_{\L}$ itself. Let $A^{\prime}$ be the
  subset of $J / \L$ formed from the union of $A$ with the set of $\L$-classes
  contained in this sink, and define $T'$ to be the subset of $S$ such that $S
  \setminus T' \subseteq J$ and $T' \cap J$ is the union of the $\L$-classes in
  $A'$.  Then either $A' = J / \L$, or, by Corollary~\ref{cor-remove-l}, $T'$ is
  a proper subsemigroup of $S$ that properly contains $T$.  Since $T$ is
  maximal, it follows that $A' = J / \L$, and so the complement of $A$
  forms a single vertex of $\Gamma_{\L}$.  This vertex is a source of
  $\Gamma_{\L}$ by condition~\eqref{item-prop-rect-2} of
  Proposition~\ref{prop-rect}, and has colour $0$ by
  Corollary~\ref{cor-remove-l}.
  
  Since $T$ is a maximal subsemigroup of $S$, it is not 
  contained in another maximal subsemigroup of $S$. Since $T$ is not a
  maximal subsemigroup of type~\eqref{item-both}, $T$ is not contained in such
  a maximal subsemigroup.

  ($\Leftarrow$)
  Let $M$ be a maximal subsemigroup of $S$ containing $T$.  Since $T$ contains a
  union of $\L$-classes of $J$, $M$ must be of type~\eqref{item-both} or
  type~\eqref{item-l}.  In either case, it follows that $M$ contains the
  $\L$-classes in $A$. However, $A$ lacks only one vertex of $\Gamma_{\L}$, and
  since a maximal subsemigroup is a proper subsemigroup, it follows that $M$
  contains no additional $\L$-classes. Such a maximal subsemigroup of
  type~\eqref{item-both} does not exist by assumption, and so $M$ is of
  type~\eqref{item-l}. Therefore $T = M$.
\end{proof}

Analogues of Corollaries~\ref{cor-remove-l} and~\ref{cor-l} hold 
in the case that $A = \varnothing$ and $B \not = \varnothing$, giving necessary
and sufficient conditions for a maximal subsemigroup of type~\eqref{item-r} to
exist. 

We describe how to use Corollary~\ref{cor-l} to compute the maximal
subsemigroups of type~\eqref{item-l} corresponding to a regular $\J$-class.
The first step is to compute the maximal subsemigroups of
type~\eqref{item-both}, as described above, and in doing so, to record the sets
$A$ and $B$ that occur for each maximal subsemigroup.  The second step is to
search for the sources of the digraph $\Gamma_{\L}$ with colour $0$. For each
source, we check whether its complement $A$ occurs as the set of $\L$-classes
of some maximal subsemigroup of type~\eqref{item-both}.  Given a set $A$
satisfying the conditions in Corollary~\ref{cor-l}, the final step is to
specify a generating set for the maximal subsemigroup $T$; this is described in
Proposition~\ref{prop-genset-l}. 

\begin{ex}\label{ex-l-r}
  Let $W$ and $J$ be the semigroup and the $\J$-class, respectively, from
  Example~\ref{ex-sec-3}. The digraph $\Gamma_{\L}$, depicted in
  Figure~\ref{fig-gamma}, contains two sources of colour $0$, $\{ L_{x_{3}}
  \}$ and $\{ L_{x_{4}} \}$.  The
  complements of these vertices in $J / \L$ are $A_{1} = \{ L_{x_{1}},
  L_{x_{4}}, L_{x_{1}x_{6}} \}$ and $A_{2} = \{ L_{x_{1}}, L_{x_{3}},
  L_{x_{1}x_{6}} \}$, respectively.  In Example~\ref{ex-rect}, we found no
  maximal subsemigroup of $W$ of type~\eqref{item-both} whose set of
  $\L$-classes was equal to $A_{1}$ or $A_{2}$. Therefore there are two maximal
  subsemigroups of $W$ of type~\eqref{item-l} arising from $J$: the set
  consisting of $W \setminus J$ along with the union of the $\L$-classes in
  $A_{1}$, and the set consisting of $W \setminus J$ along with the union of the
  $\L$-classes in $A_{2}$.
    
  In $\Gamma_{\R}$, the sources are $\{ R_{x_{1}}\}$ and $\{ R_{x_{2}} \}$, and
  they have colour $0$.  In Example~\ref{ex-rect}, we found no maximal
  subsemigroup of $W$ of type~\eqref{item-both} whose set of $\R$-classes was
  equal to the complement of either of these sources.  Hence $W\setminus
  R_{x_1}$ and $W\setminus R_{x_2}$ are the only maximal subsemigroups of $W$ of
  type~\eqref{item-r} arising from $J$.
\end{ex}

The following proposition describes generating sets for maximal subsemigroups
of type~\eqref{item-l}, the proof is similar to (but less complicated than) the
proof of Proposition~\ref{prop-genset-rect}, and is omitted.

\begin{prop}\label{prop-genset-l}
  Let $T$ be a subsemigroup of a finite semigroup $S$ such that $S
  \setminus T$ is contained in a regular $\J$-class $J$ of $S$. Suppose that
  there exists a non-empty proper subset $A \subsetneq J / \L$ such that $T \cap
  J$ is the union of the $\L$-classes in $A$.  Then $T$ is generated by any set
  consisting of:
  \begin{enumerate}[label=\emph{(\roman*)},ref=\roman*]
    \item
      $X \setminus J$;

    \item
      a generating set for the ideal of $S$ consisting of those $\J$-classes
      that lie strictly below $J$ in the $\J$-class partial order;

    \item
      a generating set for a single group $\H$-class $H_{x}$ in an $\L$-class
      belonging to $A$ for some $x \in T\cap J$;

    \item
      for every source vertex $U$ in the induced subdigraph of $\Gamma_{\L}$ on
      $A$, one element $y$ such that $y \R x$ and where $L_{y}$ belongs to $U$;
      
    \item
      for every source vertex $V$ in $\Gamma_{\R}$, one element $z$ such that $z
      \L x$ and where $R_{z}$ belongs to $V$.

  \end{enumerate}
\end{prop}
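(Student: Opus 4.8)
The plan is to follow the proof of Proposition~\ref{prop-genset-rect}, which becomes shorter here because the case ``$L_a\in A$ and $R_a\in B$'' cannot arise (as $B=\varnothing$): consequently the generators of type~(ix) and the idempotent argument used there are unnecessary. Let $Y$ be any set of the form described in the statement. First I would check that $Y\subseteq T$, so that $\genset{Y}\subseteq T$: the elements in~(i) lie in $X\setminus J\subseteq S\setminus J\subseteq T$; those in~(ii) lie in the ideal of $\J$-classes strictly below $J$, which is contained in $S\setminus J$; the group $\H$-class $H_x$ in~(iii) lies in $L_x\subseteq T\cap J$ because $L_x\in A$; each $y$ in~(iv) has $L_y$ inside a vertex contained in $A$, so $y\in T\cap J$; and each $z$ in~(v) satisfies $z\,\L\,x$, so $L_z=L_x\in A$ and $z\in T\cap J$. (The choices of $y$ and $z$ are possible since, in the regular $\J$-class $J$, the $\R$-class $R_x$ meets every $\L$-class of $J$ and $L_x$ meets every $\R$-class of $J$.)

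Next I would show $S\setminus J\subseteq\genset{Y}$, exactly as in Proposition~\ref{prop-genset-rect}: given $s\in S\setminus J$, write $s$ as a product of generators from $X$; if none of these lies in $J$ then $s\in\genset{X\setminus J}\subseteq\genset{Y}$ by~(i); otherwise $J_s$ lies strictly below $J$ in the $\J$-order, so $s$ belongs to the ideal in~(ii) and hence to $\genset{Y}$. In particular $\genset{S\setminus J}\subseteq\genset{Y}$, and so $\genset{X'}\subseteq\genset{Y}$.

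It remains to prove that $T\cap J\subseteq\genset{Y}$; fix $a\in T\cap J$, so $L_a\in A$. Since $T$ is a subsemigroup, Proposition~\ref{prop-rect}\eqref{item-prop-rect-1} (see also Corollary~\ref{cor-remove-l}) shows that $A$ is a union of vertices of $\Gamma_{\L}$, hence the whole vertex $W_a$ of $\Gamma_{\L}$ containing $L_a$ is contained in $A$. As $\Gamma_{\L}$, and so its induced subdigraph on $A$, is finite and acyclic, $W_a$ is reachable within that subdigraph from some source vertex $U$; let $y$ be the generator of type~(iv) attached to $U$, so $L_y\in U$ and $y\,\R\,x$, where $x$ is the element from~(iii). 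By Lemma~\ref{lem-path-gamma}(i) either $L_y=L_a$, in which case I set $r:=y$, or there exist $b\in L_y$ and $w\in\genset{S\setminus J}$ with $bw\in L_a$; in the latter case, since $\L$ is a right congruence, $yw\,\L\,bw\in L_a$, so $yw\in L_a$, and since $y\,\J\,yw$ stability (Lemma~\ref{lem-d-finite}) gives $yw\,\R\,y\,\R\,x$, so I set $r:=yw$. Either way $r\in L_a\cap R_x\cap\genset{Y}$. Dually, applying Lemma~\ref{lem-path-gamma}(ii) with a generator of type~(v) attached to a source of $\Gamma_{\R}$ from which the vertex of $\Gamma_{\R}$ containing $R_a$ is reachable, and using that $\R$ is a left congruence together with Lemma~\ref{lem-d-finite}, I obtain $s\in R_a\cap L_x\cap\genset{Y}$. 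Finally, $H_x$ is a group $\H$-class with $H_x\subseteq\genset{Y}$ by~(iii), so by Green's Lemma (the usual coordinatisation of the $\J$-class $J$) $H_a=sH_xr\subseteq\genset{Y}$, and in particular $a\in\genset{Y}$. As $a$ was arbitrary, $T\cap J\subseteq\genset{Y}$, whence $T=\genset{Y}$.

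There is no genuine obstacle here — this is presumably why the paper omits the proof — but the two points requiring a little care are the passage from reachability in the strongly-connected-component quotients $\Gamma_{\L}$ and $\Gamma_{\R}$ back to honest elements of $S$ (handled by Lemma~\ref{lem-path-gamma} together with stability and the one-sided congruence properties of $\L$ and $\R$), and the identity $H_a=sH_xr$, which is immediate once $H_x$ is a group and is exactly the fact already used in the proof of Proposition~\ref{prop-genset-rect}.
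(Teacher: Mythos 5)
Your proof is correct and follows exactly the route the paper indicates: it is the specialization of the proof of Proposition~\ref{prop-genset-rect} to the case $B=\varnothing$, which the paper explicitly omits as being "similar to (but less complicated than)" that argument. Your extra care in passing from reachability in $\Gamma_{\L}$, $\Gamma_{\R}$ back to elements of $R_x\cap L_a$ and $L_x\cap R_a$ (via Lemma~\ref{lem-path-gamma}, the one-sided congruence properties, and stability) correctly fills in a step that the paper's proof of Proposition~\ref{prop-genset-rect} states somewhat tersely.
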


Generating sets for maximal subsemigroups of type~\eqref{item-r} are obtained
analogously.

%%%%%%%%%%%%%%%%%%%%%%%%%%%%%%%%%%%%%%%%%%%%%%%%%%%%%%%%%%%%%%%%%%%%%%%%%%%%%%%%

\subsubsection{Maximal subsemigroups from removing the $\J$-class:
type~\eqref{item-empty}}\label{subsec-empty}

If the set of $\L$-classes $A$ and the set of $\R$-classes $B$ are both empty,
then $T = S \setminus J$, and the criteria for $T$ to be a subsemigroup of $S$
can be further simplified from those in Proposition~\ref{prop-rect}.  In
particular,
conditions~\eqref{item-prop-rect-1},~\eqref{item-prop-rect-2},~\eqref{item-prop-rect-3},
and~\eqref{item-prop-rect-5} of Proposition~\ref{prop-rect} are vacuously
satisfied, leaving only condition~\eqref{item-prop-rect-4}, which permits the
following corollary.

\begin{cor}
  The subset $S \setminus J$ of $S$ is a subsemigroup if and only if
  $\Theta$ has no edges.
\end{cor}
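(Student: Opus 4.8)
The plan is to read this off directly from Proposition~\ref{prop-rect} by taking $A = \varnothing$ and $B = \varnothing$. Since $J$ is nonempty, both $J/\L$ and $J/\R$ are nonempty, so $\varnothing$ is a proper subset of each, and for $T = S\setminus J$ the set $T\cap J = \varnothing$ is (vacuously) the union of the Green's classes in $A$ and in $B$; thus Proposition~\ref{prop-rect} applies verbatim. Conditions~\eqref{item-prop-rect-1},~\eqref{item-prop-rect-2}, and~\eqref{item-prop-rect-3} hold vacuously because $A$ and $B$ are empty, and condition~\eqref{item-prop-rect-5} holds because the empty set of vertices is independent in $\Delta$. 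The only remaining condition is~\eqref{item-prop-rect-4}: every edge $\{U,V\}$ of $\Theta$ must have an endpoint contained in $A\cup B = \varnothing$. No edge can satisfy this, so condition~\eqref{item-prop-rect-4} holds if and only if $\Theta$ has no edges. This is precisely the claimed equivalence.

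Alternatively --- and this is essentially the same argument unpacked --- one would apply Lemma~\ref{lem-subsemigroup} directly. With $T = S\setminus J$ we have $T\cap J = \varnothing$, so conditions~\eqref{item-assumption-2} and~\eqref{item-assumption-3} of Lemma~\ref{lem-subsemigroup} are vacuous, and $T$ is a subsemigroup if and only if condition~\eqref{item-assumption-1} holds, i.e. $\genset{X'}\subseteq S\setminus J$, which says $\genset{X'}\cap J = \varnothing$. It then remains to observe that $\genset{X'}\cap J = \varnothing$ is equivalent to $\Theta$ having no edges: by definition $\Theta$ has an edge incident to $U$ and $V$ exactly when some element of $\genset{X'}$ lies in the intersection of an $\L$-class belonging to $U$ with an $\R$-class belonging to $V$, and since every element of $J$ lies in a unique $\L$-class and a unique $\R$-class (each sitting inside a vertex of $\Gamma_{\L}$, respectively $\Gamma_{\R}$), $\Theta$ has an edge if and only if $\genset{X'}\cap J\neq\varnothing$.

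There is no real obstacle here: the statement is a specialization of Proposition~\ref{prop-rect} (or of Lemma~\ref{lem-subsemigroup}) to the degenerate case $A = B = \varnothing$, and the only points requiring a word of care are that $\varnothing$ is a legitimate choice of $A$ and $B$ (which needs $J\neq\varnothing$, true since $J$ is a $\J$-class) and the translation between ``$\Theta$ has no edges'' and ``$\genset{X'}\cap J = \varnothing$'', which is immediate from the definition of $\Theta$.
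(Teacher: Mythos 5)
Your proposal is correct and follows the paper's own route exactly: the paper likewise obtains this corollary by specialising Proposition~\ref{prop-rect} to $A = B = \varnothing$, noting that conditions~(i), (ii), (iii), and~(v) are vacuous and that condition~(iv) reduces to $\Theta$ having no edges. Your alternative unpacking via Lemma~\ref{lem-subsemigroup} is a sound sanity check but adds nothing beyond the paper's argument.
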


To compute the maximal subsemigroups of type~\eqref{item-empty} corresponding to
a regular $\J$-class, it is necessary to have first computed those maximal
subsemigroups of types~\eqref{item-every},~\eqref{item-both},~\eqref{item-l},
and~\eqref{item-r}. If no such maximal subsemigroups exist, then we check the
number of edges of the graph $\Theta$; if $\Theta$ has no edges, then $S
\setminus J$ is a maximal subsemigroup of $S$. In this case, a generating set
for $S \setminus J$ is given by the union of $X \setminus J$ with a generating
set for the ideal of $S$ consisting of those $\J$-classes that lie strictly
below $J$ in the $\J$-class partial order.

\subsection{The algorithm}

In this section we describe the overall algorithm for computing the maximal
subsemigroups of a finite semigroup $S$. This is achieved by putting together
the procedures described in the preceding sections. 

The algorithm considers, in turn, each $\J$-class that contains a generator. 
It is clear that it is only necessary to consider those $\J$-classes containing
generators. Maximal and non-maximal $\J$-classes are
then treated separately.  Every maximal $\J$-class contains at least one
generator.  A non-regular maximal $\J$-class is necessarily trivial.  For any
maximal $\J$-class, $S \setminus J$ is a subsemigroup of $S$.  Therefore for a
trivial maximal $\J$-class, the only maximal subsemigroup arising from $J$ is
$S \setminus J$.  This leaves the non-trivial maximal $\J$-classes to consider.
Such a $\J$-class is necessarily regular. Hence a maximal subsemigroup of $S$
arising from a non-trivial maximal $\J$-class $J$ is of the form $(S \setminus
J) \cup U$, and occurs precisely when $U \cup \{0\}$ is a maximal subsemigroup
of $J^{*}$. Thus it suffices to compute the maximal subsemigroups
of the Rees 0-matrix semigroup isomorphic to $J^*$ of
types~\eqref{item-remove-lambda}--\eqref{item-subgroup}, as described in
Section~\ref{section-rees}.  For the non-maximal $\J$-classes we proceed as
described earlier in this section. 

The algorithm is described in pseudocode in Algorithm~\ref{algorithm-arbitrary}.
The algorithms described in this paper are fully implemented in the {\sc
Semigroups} package~\cite{Mitchell2017aa} for {\sc GAP}~\cite{GAP4}, and the
underlying algorithms for graphs and digraphs are implemented in the {\sc
Digraphs} package~\cite{DeBeule2017aa} for {\sc GAP}~\cite{GAP4}. 

\begin{algorithm}
  \caption{Maximal subsemigroups of a finite semigroup}
  \label{algorithm-arbitrary}
  \begin{algorithmic}[1]

    \item[\textbf{Input:}] $S = \genset{X}$, a finite semigroup with generating
      set $X$.

    \item[\textbf{Output:}] the non-empty maximal subsemigroups $\mathfrak{M}$
      of $S$.

    \item $\mathfrak{M} := \varnothing$ 

    \For{$J_{x} \in \bigset{J \in S/\J}{J \cap X \neq \varnothing}$}

      \If{$J_{x}$ is a maximal $\J$-class of $S$}

        \If{$|J_{x}| = 1$}
        \Comment{$J_{x}$ is non-regular, or is a trivial subgroup}

          \State{$\mathfrak{M} \gets \mathfrak{M} \cup \{S \setminus J_{x}\}$}

        \Else{}
        \Comment{$J_{x}$ is necessarily regular}

          \State{compute $\phi$, an isomorphism from $J_{x}^{*}$ to a normalized
          Rees $0$-matrix semigroup}

          \State{compute $\mathfrak{J}$, the maximal subsemigroups of
          $(J_{x})\phi \cup \{0\}$ of
          types~\eqref{item-remove-lambda},~\eqref{item-remove-i},~\eqref{item-rect},
          and~\eqref{item-subgroup}}\Comment{Section~\ref{section-rees}}

          \For{$M \in \mathfrak{J}$}

          \State{$\mathfrak{M} \gets \mathfrak{M} \cup \left\{ (S \setminus
          J_{x}) \cup (M \setminus \{0\})\phi^{-1} \right\}$}

          \EndFor{}

        \EndIf{}

      \Else{}
      \Comment{$J_{x}$ is a non-maximal $\J$-class}

        \State{$X^{\prime} := \bigset{y \in X}{J_{y} > J_{x}}$}
      
        \If{$J_{x}$ is non-regular and $x \notin \genset{X^{\prime}}$}

          \State{$\mathfrak{M} \gets \mathfrak{M} \cup \{ S \setminus J_{x} \}$}
          \Comment{Type~\eqref{item-nonreg},
          Proposition~\ref{prop-non-regular-arbitrary}}

          \ElsIf{$J_{x}$ is regular and $J_x\cap X \not\subseteq \genset{X'}$}

            \State{compute the digraphs $\Gamma_{\L}$ and $\Gamma_{\R}$ and the
            graphs $\Delta$ and $\Theta$}

            \State{compute $\phi$, an isomorphism from $J_{x}^{*}$ to a
            normalized Rees 0-matrix semigroup}

            \State{compute the set $E$ consisting of one idempotent in each
            $\L$-class of $J_x$}

            \State{compute $\mathfrak{J}$, the maximal subsemigroups of
            $(J_{x})\phi \cup \{0\}$ of type~\eqref{item-subgroup} which contain
            $(EX^{\prime})\phi$}
            \Comment{Algorithm~\ref{algorithm-type-6}}

            \For{$M \in \mathfrak{J}$}

              \State{$\mathfrak{M} \gets \mathfrak{M} \cup \left\{ (S \setminus
              J_{x}) \cup (M \setminus \{0\})\phi^{-1} \right\}$}
              \Comment{Type~\eqref{item-every}, Corollary~\ref{cor-intersect}}

            \EndFor{}

            \State{add maximal subsemigroups of type~\eqref{item-both} to
            $\mathfrak{M}$}
            \Comment{Section~\ref{subsec-rect}}

            \State{add maximal subsemigroups of types~\eqref{item-l}
            and~\eqref{item-r} to $\mathfrak{M}$}
            \Comment{Section~\ref{subsec-row-col}}

            \If{no maximal subsemigroups have been found to arise from $J_{x}$}

              \State{$\mathfrak{M} \gets \mathfrak{M} \cup \{S \setminus
              J_{x}\}$} \Comment{Type~\eqref{item-empty},
              Section~\ref{subsec-empty}}

          \EndIf{}

        \EndIf{}

      \EndIf{}

    \EndFor{}

    \State{\Return{$\mathfrak{M}$}}
  \end{algorithmic}
\end{algorithm}

\section{Performance analysis}
\label{section-performance}

In this section we provide some analysis of the performance of the algorithms
described in this paper.  These algorithms are implemented in the
\textsc{Semigroups} package~\cite{Mitchell2017aa} for GAP~\cite{GAP4} and the
computations in this section were run on a 2.66~GHz Intel Core i7 processor with
8GB of RAM. 

Given a semigroup $S$ represented by a set of generators $X$, such as
transformations, we compare the time taken to compute
the Green's structure of $S$ with that taken to find the maximal
subsemigroups. Additionally we include the amount of time spent during the
computation of the maximal subsemigroups on finding maximal 
cliques or maximal subgroups of group $\mathscr{H}$-classes, when these times
are not negligible. 

The semigroups considered are given below. 
\begin{itemize}
  \item 
    The full transformation monoids $\mathcal{T}_n$ consisting of all
    transformations of $\{1, \ldots, n\}$ when $n = 2, \ldots, 11$. If $n
    > 2$, then $\mathcal{T}_n$ is generated by 3 transformations, and this is
    the minimal number.  There are $n ^ n$ transformations of $\{1, \ldots,
    n\}$ and the number of $\mathscr{J}$-classes in $\mathcal{T}_n$ is $n$.
    See Example~\ref{ex-sec-3} for further details.

  \item 
    The inverse monoids $\mathcal{PORI}_n$ consisting of the bijective functions 
    between subsets of $\{1, \ldots, n\}$ which preserve or reverse the
    cyclic orientation of $\{1, \ldots, n\}$ for $n = 11, \ldots, 20$.
    As an inverse monoid $\mathcal{PORI}_n$ is minimally generated 
    by 3 elements when $n > 2$. If $n > 0$, then 
    $$|\mathcal{PORI}_n| = 1 + n \binom{2n}{n} - \frac{n ^ 2 (n ^ 2 - 2n +
    3)}{2}$$
    and the number of $\mathscr{J}$-classes in $\mathcal{PORI}_n$ is $n + 1$. 

  \item 
    The \textit{Jones monoids} $\mathcal{J}_n$, introduced in~\cite{Lau2006aa},
    and also known as the \emph{Temperley-Lieb monoid}, for $n = 6, \ldots,
    20$.  The definition of $\mathcal{J}_n$ is too long to give here.  A
    minimal generating set for $\mathcal{J}_n$ has $n - 1$ elements when $n >
    1$.  If $n > 0$, then 
    $$|\mathcal{J}_n| = \frac{1}{n + 1}\binom{2n}{n}$$
    and the number of $\mathscr{J}$-classes in $\mathcal{J}_n$ is 
    $\left\lceil \frac{n}{2}\right\rceil$.

  \item 
    100 subsemigroups of $\mathcal{T}_9$ generated by $9$ transformations chosen
    uniformly at random; see Table~\ref{table-random} for further information.

  \item 
    100 regular Rees 0-matrix semigroups $R = \M^ 0 [I, G, \Lambda; P]$ where
    $|I| = |\Lambda| = 1, \ldots, 20$, $G$ was a permutation group on $10$
    points chosen uniformly at random from the representatives of conjugacy
    classes of all subgroups of the symmetric group on $10$ points, and the
    entries of the matrix $P$ were chosen randomly such that the
    Graham-Houghton graph of $R$ had between $1$ and $|I| = |\Lambda|$
    connected components. The entries of $P$ were not chosen uniformly, nor do
    we claim that the resulting Rees 0-matrix semigroups represent a uniform
    sample of such semigroups. 

\end{itemize}
We refer the interested reader to~\cite{East2017aa} and the references therein
for further details about the monoids $\mathcal{T}_n$, $\mathcal{PORI}_n$, and
$\mathcal{J}_n$ and for a characterisation of their maximal subsemigroups;  see
also Tables~\ref{table-T},~\ref{table-PORI}, and~\ref{table-J} for the numbers
of maximal subsemigroups of these monoids.  The results obtained
in~\cite{East2017aa} relied heavily on computational experiments performed
using the algorithms described in this paper, and their implementations in
GAP~\cite{GAP4}.
The monoids $\mathcal{T}_n$, $\mathcal{PORI}_n$, and $\mathcal{J}_n$ were
chosen because they are well-studied in the literature, because they have
different representations in the \textsc{Semigroups} package for GAP, and
because they exhibit different behaviour when computing the maximal
subsemigroups. In Figures~\ref{fig-T},~\ref{fig-PORI}, and~\ref{fig-J}, the
time taken to compute the partial order of the $\mathscr{J}$-classes of a given
semigroup is compared to that taken to find the maximal subsemigroups.  The
partial order of the $\mathscr{J}$-classes of a semigroup was obtained using
the method described in~\cite[Algorithm 14]{East2015ab} and implemented in the
\textsc{Semigroups} package~\cite{Mitchell2017aa} for GAP~\cite{GAP4}.  The
range of values of $n$, in each case, includes the largest value where the
Green's structure could be computed within the limitations of the
hardware.\footnote{Note that $|\mathcal{T}_{11}| = 285\ 311\ 670\ 611$,
$|\mathcal{PORI}_{20}| = 2\ 756\ 930\ 503\ 801$, and
$|\mathcal{J}_{20}| = 6\ 564\ 120\ 420$.} 

We opted to include some data relating to random semigroups to highlight
possible typical behaviour of the maximal subsemigroups algorithms. What
constitutes a reasonable notion of a ``random semigroup'' is debatable,
although we believe that the notions used here are somewhat meaningful.
Figure~\ref{fig-random-1} concerns the 100 subsemigroups of $\mathcal{T}_9$
generated by $9$ transformations chosen uniformly at random.  A point on the
$x$-axis corresponds to a single such semigroup. The points on the $x$-axes are
sorted in increasing order according to the ratio of the time taken to compute
the maximal subsemigroups and the time taken to compute the partial order of the
$\mathscr{J}$-classes. The particular choices of $9$ transformations on $9$
points were made because they approach the limit of what is practical to
compute.  Figure~\ref{fig-random-2} concerns the 2000 random Rees 0-matrix
semigroups.
The Green's structure of a regular Rees 0-matrix semigroup
can be determined immediately from its definition, and so the time to determine
this is not used for comparison in Figure~\ref{fig-random-2}. For each
dimension considered, the mean of 100 examples is shown in
Figure~\ref{fig-random-2}. 

While there are some instances in the data presented in this section where
computing the maximal subsemigroups is several orders of magnitude slower than
computing the Green's structure, for the majority the times taken are roughly
comparable. For $\mathcal{T}_n$ and $\mathcal{PORI}_n$, the time taken to
compute the maximal subsemigroups is not dominated by either the time taken to
compute maximal cliques or the time to compute maximal subgroups, but rather by
constructing and processing the graphs $\Gamma_{\L}$, $\Gamma_{\R}$, $\Delta$,
and $\Theta$ from Section~\ref{section-arbitrary}.  For these monoids, the
graphs $\Delta$ have 2 vertices, and as such there are no maximal cliques
from which a maximal subsemigroup could arise.  On the other hand, for the
Jones monoids $\mathcal{J}_n$, the majority of the time in the computation of
the maximal subsemigroups is spent finding maximal cliques. 

For Rees 0-matrix semigroups, it appears from Figure~\ref{fig-random-2} that
the time taken to compute maximal subsemigroups approaches the time taken to
compute maximal cliques as the dimension increases. 

\begin{table}
  \begin{center}
    \begin{tabular}{l|r|r|r|r|r|r|r|r|r|r}
      Degree $n$ & 2  & 3  & 4  & 5 & 6  & 7  & 8  & 9  & 10 & 11 \\\hline
      Maximal subsemigroups 
      & 2 & 5 & 9 & 23 & 54 & 185 & 354 & 1377 & 3978 & 363905
    \end{tabular}
    \caption{The number of maximal subsemigroups of the full
    transformation monoid $\mathcal{T}_{n}$
     for some small values of $n\in \N$.}
    \label{table-T}
  \end{center}
\end{table}

\begin{table}
  \begin{center}
    \begin{tabular}{l|r|r|r|r|r|r|r|r|r|r|r}
      Degree $n$ & 10 & 11 & 12 & 13 & 14 & 15 & 16 & 17 & 18 & 19 & 20
      \\\hline
      Maximal subsemigroups & 9 & 14 & 7 & 16 & 11 & 11 & 5 & 19 & 7
      & 22 & 9 
    \end{tabular}
    \caption{The number of maximal subsemigroups of $\mathcal{PORI}_{n}$
     for some small values of $n\in \N$.}
    \label{table-PORI}
  \end{center}
\end{table}

\begin{table}
  \begin{center}
    \begin{tabular}{l|r|r|r|r|r|r|r|r|r|r|r|r|r|r|r}
      Degree $n$ & 10 & 11 & 12 & 13 & 14 & 15 & 16 & 17 & 18 &
      19 & 20 \\\hline
      Maximal subsemigroups & 85 & 129 & 199 &
      311 & 491 & 781 & 1249 & 2005 & 3227 & 5203 & 8399 \\
    \end{tabular}
    \caption{The number of maximal subsemigroups of the Jones
    monoids $\mathcal{J}_{n}$ for some small values of $n\in \N$.}
    \label{table-J}
  \end{center}
\end{table}

\begin{table}
  \begin{center}
    \begin{tabular}{l|r|r|r|r|r|r}
      & Min & Max & Mean & Median & Standard deviation\\\hline
      Size & 125333 & 85014449 & 5657333 & 2484319 & 9865603 \\
      Number of maximal subsemigroups & 9 & 640 & 25 & 13 & 65 \\
      Number of $\mathscr{J}$-classes & 8 & 89858 & 5336 & 2550  & 10187\\
      Time for maximal subsemigroups (milliseconds) & 8 & 272132 & 15998 & 5003
      & 37944
    \end{tabular}
    \caption{Information about the 100 subsemigroups of $\mathcal{T}_9$
    generated by $9$ transformations chosen uniformly at random.}
    \label{table-random}
  \end{center}
\end{table}

\begin{figure}
  \centering
  \includegraphics[width=0.6\textwidth]{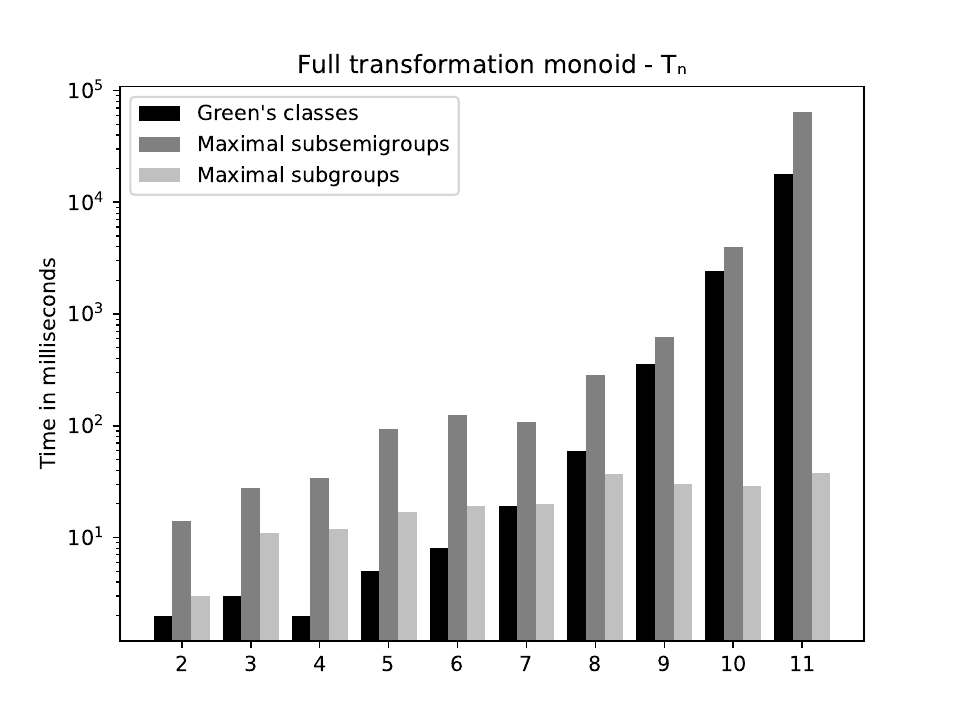}
  \caption{Comparison of the time taken to compute the partial order of
  $\mathscr{J}$-classes of the full transformation monoids $\mathcal{T}_n$, $n
  = 2, \ldots, 11$, with the time taken to compute their maximal subsemigroups,
  and the time spent finding maximal subgroups of group $\mathscr{H}$-classes.} 
  \label{fig-T}
\end{figure}
\begin{figure}
  \centering
  \includegraphics[width=0.6\textwidth]{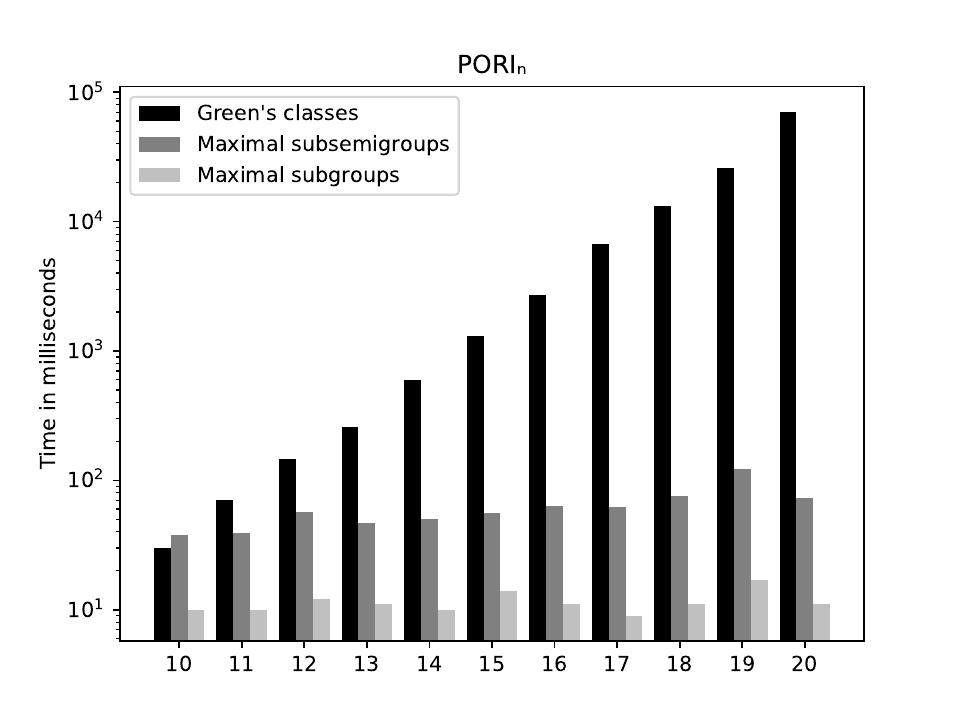}
  \caption{Comparison of the time taken to compute the partial order of
  $\mathscr{J}$-classes of the monoids $\mathcal{PORI}_n$, $n = 10,
  \ldots, 20$, with the time taken to compute their maximal subsemigroups, and
  the time spent finding maximal subgroups of group $\mathscr{H}$-classes.} 
  \label{fig-PORI}
\end{figure}
\begin{figure}
  \centering
  \includegraphics[width=0.6\textwidth]{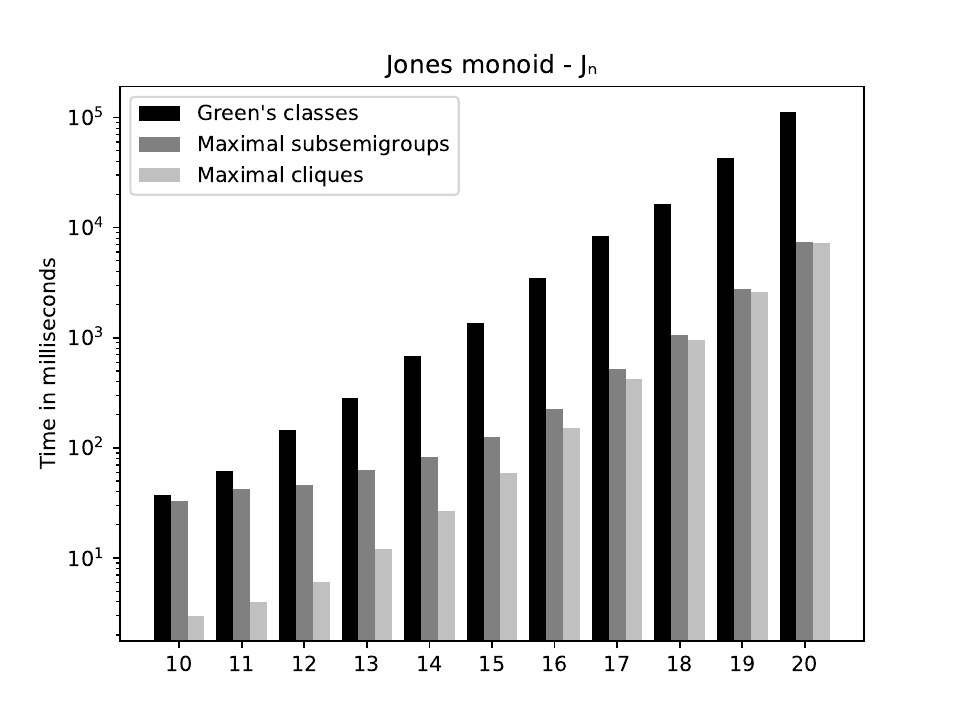}
  \caption{Comparison of the time taken to compute the partial order of
  $\mathscr{J}$-classes of the Jones monoids $\mathcal{J}_n$, $n = 10,
  \ldots, 20$, with the time taken to compute their maximal subsemigroups, and
  the time spent finding maximal cliques.} 
  \label{fig-J}
\end{figure}
\begin{figure}
  \centering
  \includegraphics[width=0.6\textwidth]{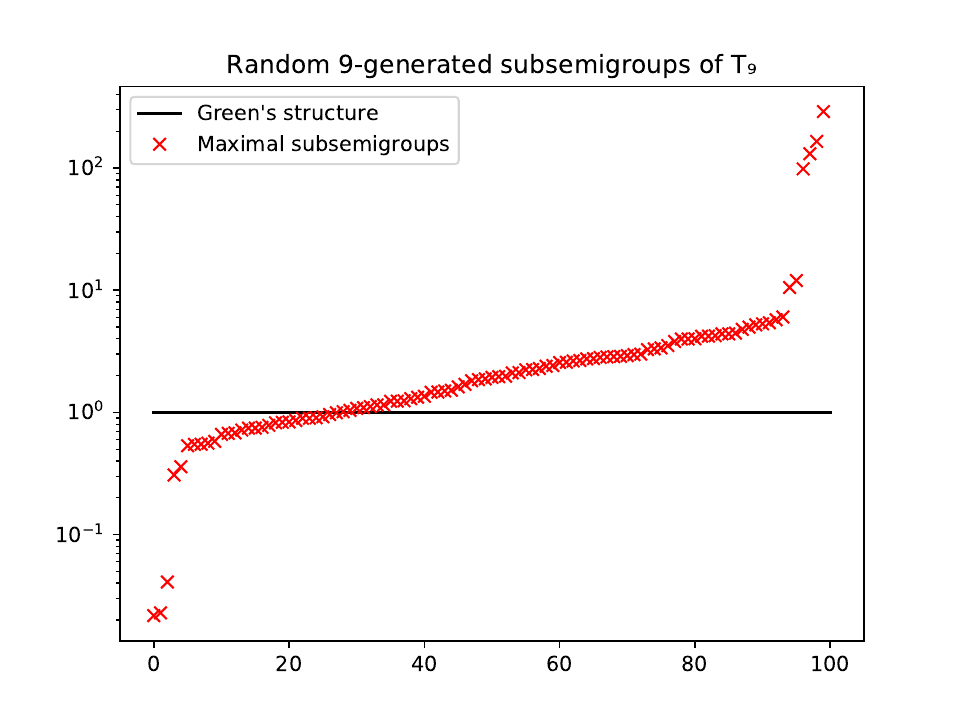}
  \caption{Comparison of the time taken to compute the partial order of
  $\mathscr{J}$-classes of 100 random 9-generated subsemigroups of
  $\mathcal{T}_9$ 
  with the time taken to compute their maximal subsemigroups.}
  \label{fig-random-1}
\end{figure}
\begin{figure}
  \centering
  \includegraphics[width=0.6\textwidth]{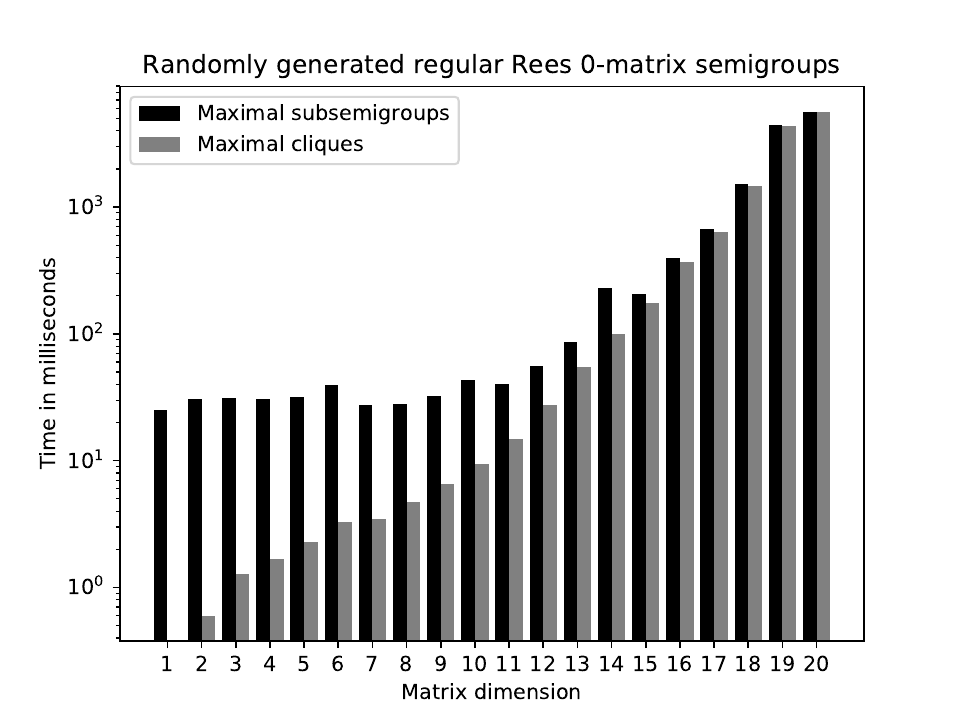}
  \caption{Comparison of the mean time taken to compute the maximal
  subsemigroups of 100 random regular Rees 0-matrix semigroups of a given
  dimension with the mean time taken to compute the maximal cliques of the
  duals of their Graham-Houghton graphs.}
  \label{fig-random-2}
\end{figure}

\section*{Acknowledgements}
The first author wishes to thank the support of the School of Mathematics and
Statistics at the University of St Andrews for his Ph.D. Scholarship.
The third author wishes to acknowledge the support of his Carnegie Ph.D.
Scholarship from the Carnegie Trust for the Universities of Scotland.
The authors also thank the anonymous referee for their helpful comments.

\end{document}